\documentclass[11pt,a4paper]{amsart}

\usepackage{subfig}
\usepackage{tikz-cd}
\usepackage{tikz}
\usepackage{caption}
\usepackage{amssymb}
\usepackage[all]{xy}
\usepackage{amsthm}
\usepackage{mathrsfs}
\usepackage{hyperref}
\usepackage{xcolor}
\usepackage{bm}
\definecolor{green}{RGB}{0,127,0}
\definecolor{blue}{RGB}{0,0,150}
\hypersetup{
    colorlinks=true,
    linkcolor=blue,
    filecolor=magenta,      
    urlcolor=cyan,
    bookmarks=true,
    citecolor=green
}

\usepackage{graphicx}

\usepackage{setspace}
\onehalfspacing

\usepackage[top=2.6cm, bottom=2.6cm, left=2.6cm, right=2.6cm]{geometry}

\usepackage{graphicx}

\DeclareMathOperator{\N}{\mathbf{N}}
\DeclareMathOperator{\Z}{\mathbf{Z}}

\DeclareMathOperator{\Hom}{\textup{Hom}}
\DeclareMathOperator{\GL}{\textup{GL}}

\DeclareMathOperator{\Ext}{\textup{Ext}}
\DeclareMathOperator{\C}{\mathbf{C}}
\DeclareMathOperator{\Q}{\mathbf{Q}}

\DeclareMathOperator{\Ind}{\textup{Ind}}

\DeclareMathOperator{\A}{\mathbf{A}}
\DeclareMathOperator{\D}{\mathbb{D}}

\DeclareMathOperator{\im}{\textup{im}}
\DeclareMathOperator{\supp}{\textup{supp}}

\DeclareMathOperator{\End}{\textup{End}}

\DeclareMathOperator{\OO}{\mathcal{O}}
\DeclareMathOperator{\B}{\mathcal{B}}

\DeclareMathOperator{\id}{\textup{id}}

\DeclareMathOperator{\gl}{\mathfrak{gl}}

\DeclareMathOperator{\Ob}{\textup{Ob}}

\DeclareMathOperator{\Perv}{\textup{Perv}}

\DeclareMathOperator{\Irr}{\textup{Irr}}

\DeclareMathOperator{\ICC}{\mathcal{IC}}
\DeclareMathOperator{\rank}{\textup{rank}}

\DeclareMathOperator{\Coh}{\textup{Coh}}
\DeclareMathOperator{\Tor}{\textup{Tor}}
\DeclareMathOperator{\Res}{\textup{Res}}
\DeclareMathOperator{\G}{\mathbf{G}}
\DeclareMathOperator{\pt}{\textup{pt}}
\DeclareMathOperator{\SL}{\textup{SL}}
\DeclareMathOperator{\rk}{\textup{rk}}

\def\LL{\boldsymbol{\Lambda} \hspace{-.06in} \boldsymbol{\Lambda}}

\newtheorem{theorem}{Theorem}[section]
\newtheorem{lemma}[theorem]{Lemma}
\newtheorem{proposition}[theorem]{Proposition}
\newtheorem{cor}[theorem]{Corollary}

\theoremstyle{definition}

\theoremstyle{remark}
\newtheorem{remark}[theorem]{Remark}

\numberwithin{equation}{section}



\newcommand\Overline[2][1pt]{%
    \begin{tikzpicture}[baseline=(a.base)]
      \node[inner xsep=0pt,inner ysep=1.5pt] (a) {$#2$};
      \draw[line width= #1] (a.north west) -- (a.north east);
    \end{tikzpicture}
    }

\setcounter{tocdepth}{1}
\begin{document}

\title[Perverse sheaves on the stack of coherent sheaves]{Perverse sheaves with nilpotent singular support on the stack of coherent sheaves on an elliptic curve}

\author{Lucien Hennecart}

\address{Universit\'e Paris-Saclay, CNRS,  Laboratoire de math\'ematiques d'Orsay, 91405, Orsay, France}

\email{lucien.hennecart@universite-paris-saclay.fr}

\date{\today}

\begin{abstract}
We define a stratification of the moduli stack of coherent sheaves on an elliptic curve which allows us (1) to give an explicit description of the irreducible components of the global nilpotent cone of elliptic curves, (2) to establish an explicit bijection between the simple objects of the category of perverse sheaves defined by Schiffmann to categorify the elliptic Hall algebra (the so-called spherical Eisenstein sheaves) and the irreducible components of the global nilpotent cone and (3) to give an explicit description and parametrization of the perverse sheaves on the moduli stack of coherent sheaves on an elliptic curve having nilpotent singular support. Along the way, we find a combinatorial parametrization of the irreducible components of the semistable locus of the elliptic global nilpotent cone. The comparison with Bozec's parametrization leads to an interesting combinatorial problem.
\end{abstract}

\maketitle

\tableofcontents

\section{Introduction}
\label{intro}
In this paper, we study the relationship between the irreducible components of the global nilpotent cone of an elliptic curve and the simple objects of some categories of perverse sheaves on the stack of coherent sheaves (Eisenstein perverse sheaves) relevant in the geometric Langlands program. The global nilpotent cone is a closed substack of the stack of Higgs bundle whose geometry has been studied in depth and is also an essential object in the geometric Langlands program (\cite{MR962524,MR899400}). The stack of Higgs bundles is the cotangent stack to the stack of coherent sheaves and the global nilpotent cone is a Lagrangian substack (\cite{MR962524,MR1853354}). When the underlying curve is the affine line, the global nilpotent cone for torsion sheaves of length $d$ is the stack quotient given by pairs of $d\times d$ matrices, the second one being nilpotent, modulo the simultaneous conjugation by the general linear group $\GL_d$. This stack plays a important role in Springer theory and in the microlocal study of character sheaves on reductive Lie algebras (\cite{MR2124171}). Spherical Eisenstein sheaves form a category of perverse sheaves on the stack of coherent sheaves and are the perverse sheaves appearing as shifted direct summands of the induction of trivial local systems (\cite{MR2942792}). In this paper, we define \emph{twisted spherical Eisenstein sheaves}. They form a bigger category: we take the simple constituents of inductions of arbitrary local systems. They can be defined for an arbitrary smooth, projective curve (over a finite field and in the $\ell$-adic setting or over a complex curve, working with $\Q$ or $\C$ coefficients). Twisted spherical Eisenstein perverse sheaves supported on some connected component of the stack of torsion sheaves have an explicit description in terms of local systems associated to representations of the symmetric groups and local systems on the curve. This is one of our main results. The particular structure of the stack of coherent sheaves on an elliptic curve which rests upon the description by Atiyah of the category of such objects (\cite{MR131423}) allowed Schiffmann to describe in explicit terms the whole category of spherical Eisenstein sheaves (\cite{MR2942792}). For curves of genus bigger that two, the description remains mysterious. The interest in this category lies in the fact that it gives a geometric categorification of the elliptic Hall algebra defined in \cite{MR2922373}. The elliptic Hall algebra is a deformation of the Hopf algebra of diagonally symmetric polynomials
\[
\LL^+=\C[x_1^{\pm 1},\hdots,y_1,\hdots]^{\mathfrak{S}_{\infty}}. 
\]
The combinatorial study of such a ring is part of the theory of \emph{multisymmetric} functions, which attempts to generalize to an arbitrary number of sets of variables the classical theory of Macdonald of symmetric functions (\cite{MR3443860}). The elliptic Hall algebra has now appeared in a great diversity of problems: in the study of the $K$-theory of the Hilbert scheme of the affine plane (\cite{MR3018956}), in skein theory of tori (\cite{MR3626565}), in diagrammatic categorification (\cite{MR3874690}),...

Motivated by a putative Lagrangian construction of a specialization of the elliptic Hall algebra in the spirit of Lusztig's semicanonical basis of quantum groups (\cite{MR1758244}), by the fact that in the context of quivers, Lusztig sheaves (\cite{MR1088333}) are in canonical one-to-one correspondence with irreducible components of Lusztig nilpotent variety (\cite{MR1458969}) and by a possible geometric interpretation of elliptic Kostka polynomials, which we leave for future investigations, we were led to the study of the characteristic cycle map from the category of spherical Eisenstein sheaves to Lagrangian cycles in the stack of Higgs bundles. Our main results are the description of the irreducible components of the global nilpotent cone of an elliptic curve, the unitriangularity of the characteristic cycle map and the description of simple perverse sheaves on the stack of coherent sheaves on an elliptic curve having nilpotent singular support. As a corollary, we deduce that the characteristic cycle map induces a canonical bijection between isomorphism classes of simple spherical Eisenstein sheaves and irreducible components of the global nilpotent cone.

We would like to mention that Bozec has an other approach to the description of the irreducible components of the global nilpotent cone which works for any genus (\cite{2017bozec}). His approach uses the Jordan type of the Higgs field while our is very specific to elliptic curves. An interesting combinatorial problem is to relate these two parametrizations. We do this in small ranks, where piecewise linear structures arise.

\subsection{The main results}
\label{mainresults}
We give a brief overview of the main results of this paper. We refer to relevant sections for more details on the notation.
\subsubsection{The irreducible components of the elliptic global nilpotent cone}
\label{irrcompnilcone}
We consider a complex elliptic curve $X$. The category of coherent sheaves on $X$ is denoted by $\Coh(X)$, the set of isomorphism classes of coherent sheaves by $\lvert\Coh(X)\rvert$ and the moduli stack of objects of $\Coh(X)$ by $\mathfrak{Coh}(X)$. We adopt similar notations for subcategories of $\Coh(X)$ when this makes sense. The class of a coherent sheaf $\mathcal{F}$ on $X$ is the pair $\alpha=(r,d)$ of its rank and its degree, $r=\rank \mathcal{F}$, $d=\deg\mathcal{F}$. It is an element of the monoid $\Z^+=\{(r,d)\in\Z^2\mid r>0 \text{ or ($r=0$ and $d>0$)}\}$. A coherent sheaf $\mathcal{F}$ on $X$ is said \emph{semistable} provided that for any proper nonzero subsheaf $\mathcal{G}$, the inequality $\frac{\deg\mathcal{G}}{\rank\mathcal{G}}\leq\frac{\deg\mathcal{F}}{\rank\mathcal{F}}$ holds. The quantity $\frac{\deg\mathcal{F}}{\rank\mathcal{F}}$ is called the slope of $\mathcal{F}$ and is denoted by $\mu(\mathcal{F})$. If the inequalities are always strict, the sheaf is said stable. Let $\mathscr{P}$ stand for the set of all partitions. In Section \ref{sstable}, we will define a partition of the set of isomorphism classes of rank $r$, degree $d$ semistable coherent sheaves on $X$,
\[
\lvert\Coh^{ss}_{\alpha}(X)\rvert=\bigsqcup_{\xi\in(\N^{\mathscr{P}})_{\delta}}\lvert\Coh^{ss}_{\alpha,\xi}(X)\rvert
\]
where $\alpha=(r,d)$, $\delta=\gcd(r,d)$ and $(\N^{\mathscr{P}})_{\delta}$ denotes the set of functions $\mathscr{P}\rightarrow \N$ such that $\sum_{\lambda\in\mathscr{P}}\xi(\lambda)\lvert\lambda\rvert=\delta$ (in particular, $\xi$ has finite support). 

The Harder-Narasinhan filtration of a coherent sheaf $\mathcal{F}$ on $X$ is the unique filtration $0=\mathcal{F}_0\subset \mathcal{F}_1\subset\hdots\subset\mathcal{F}_s=\mathcal{F}$ such that any successive subquotient $\mathcal{F}_j/\mathcal{F}_{j-1}$ is semistable and the sequence $(\mu(\mathcal{F}_{j}/\mathcal{F}_{j-1}))_{1\leq j\leq s}$ is strictly decreasing. This sequence is called the Harder-Narasimhan type of $\mathcal{F}$. More generally, for any $\alpha\in\Z^+$, we will define a partition of the set of isomorphism classes of coherent sheaves of Harder-Narasimhan type (abbreviated \emph{HN-type}) $\bm{\alpha}=(\alpha_1,\hdots,\alpha_s)$, where $\alpha_i=(r_i,d_i)\in\Z^+$:
\[
 \lvert\Coh_{\bm{\alpha}}(X)\rvert=\bigsqcup_{\bm{\xi}\in(\N^{\mathscr{P}})_{\bm{\delta}}}\lvert\Coh_{\bm{\alpha},\bm{\xi}}(X)\rvert
\]
where $(\N^{\mathscr{P}})_{\bm{\delta}}=\{\bm{\xi}=(\xi_1,\hdots,\xi_s)\in(\N^{\mathscr{P}})^s \mid\xi_i\in(\N^{\mathscr{P}})_{\delta_i} \text{ for $1\leq i\leq s$}\}$, $\delta_i=\gcd(\alpha_i)$ and $\bm{\delta}=(\delta_1,\hdots,\delta_s)$. An element $\bm{\xi}$ is said \emph{regular} provided that for any $1\leq i\leq s$ and any partition $\lambda\in\mathscr{P}$, $\xi_i(\lambda)\neq 0$ only if the length of $\lambda$ is $1$. The datum of a regular $\bm{\xi}$ is equivalent to the datum of the $s$-tuple of partitions $\bm{\lambda}=(\lambda_{1},\hdots,\lambda_s)$ where $\lambda_i=(j^{\xi_i((j))},j\geq 1)$, and $(j)$ denotes the partition of $j$ of length one. In particular, $\bm{\lambda}\in\mathscr{P}_{\bm{\delta}}=\{(\lambda_1,\hdots,\lambda_s)\in\mathscr{P}^s\mid \lvert\lambda_i\rvert=\delta_i\}$. In this case, we denote $\xi_i=\xi_{\lambda_i}$ and $\bm{\xi}=\bm{\xi}_{\bm{\lambda}}$.

This partition induces a locally closed stratification of the moduli stack of coherent sheaves of HN-type $\bm{\alpha}$:
\[
 \mathfrak{Coh}_{\bm{\alpha}}(X)=\bigsqcup_{\bm{\xi}\in(\N^{\mathscr{P}})_{\bm{\delta}}}\mathfrak{Coh}_{\bm{\alpha},\bm{\xi}}(X)
\]
and a stratification of the moduli stack of coherent sheaves of class $\alpha=(r,d)\in\Z^+$ in the numerical Grothendieck group:
\[
 \mathfrak{Coh}_{\alpha}(X)=\bigsqcup_{\bm{\alpha}\in HN(\alpha)}\bigsqcup_{\bm{\xi}\in(\N^{\mathscr{P}})_{\bm{\delta}}}\mathfrak{Coh}_{\bm{\alpha},\bm{\xi}}(X)
\]
where the first union runs over HN-types of rank $r$ and degree $d$, $\bm{\delta}=(\gcd{\alpha_i})_{1\leq i\leq s}$ if $\bm{\alpha}=(\alpha_i)_{1\leq i\leq s}$, $(\N^{\mathscr{P}})_{\bm{\delta}}=\{\bm{\xi}=(\xi_i)_{1\leq i\leq s}\in(\N^{\mathscr{P}})^{s}\mid \xi_i\in (\N^{\mathscr{P}})_{\delta_i}\}$.
The moduli stack of Higgs sheaves of type $\alpha=(r,d)$ is denoted $\mathfrak{Higgs}_{\alpha}(X)$ (Section \ref{globnil}). It contains the closed substack of nilpotent Higgs sheaves $\mathscr{N}_{\alpha}$ and there is a natural projection $\pi_{\alpha}:\mathfrak{Higgs}_{\alpha}(X)\rightarrow\mathfrak{Coh}_{\alpha}(X)$ forgetting the Higgs field. The restriction of this projection to the global nilpotent cone is denoted
\[
 \pi_{\alpha,\mathscr{N}}:\mathscr{N}_{\alpha}\rightarrow \mathfrak{Coh}_{\alpha}.
\]
For $\bm{\alpha}\in HN(\alpha)$, we let $\mathscr{N}_{\bm{\alpha}}$ be the union of the irreducible components of dimension $\dim\mathscr{N}$ of $\pi_{\alpha,\mathscr{N}}^{-1}(\mathfrak{Coh}_{\bm{\alpha}}(X))$. There is an induced projection
\[
 \pi_{\bm{\alpha},\mathscr{N}} : \mathscr{N}_{\bm{\alpha}}\rightarrow \mathfrak{Coh}_{\bm{\alpha}}(X)
\]
If furthermore $\bm{\xi}\in(\N^{\mathscr{P}})_{\bm{\delta}}$, we let $\mathscr{N}_{\bm{\alpha},\bm{\xi}}=\pi_{\bm{\alpha},\mathscr{N}}^{-1}(\mathfrak{Coh}_{\bm{\alpha},\bm{\xi}}(X))$, $\overline{\mathscr{N}_{\bm{\alpha},\bm{\xi}}}$ denote the closure of $\mathscr{N}_{\bm{\alpha},\bm{\xi}}$ in $\mathscr{N}_{\bm{\alpha}}$ and $\Overline[1.5pt]{\mathscr{N}_{\bm{\alpha},\bm{\xi}}}$ denote the closure of $\mathscr{N}_{\bm{\alpha},\bm{\xi}}$ in $\mathscr{N}_{\alpha}$. If $\bm{\xi}=\bm{\xi}_{\bm{\lambda}}$ for some uplet of partitions $\bm{\lambda}\in\mathscr{P}_{\bm{\delta}}$, that is $\bm{\xi}$ is regular, we let $\mathscr{N}_{\bm{\alpha},\bm{\lambda}}=\mathscr{N}_{\bm{\alpha},\bm{\xi}_{\bm{\lambda}}}$.

The following theorem gives a description of the irreducible components of $\mathscr{N}_{\alpha}$ and $\mathscr{N}_{\bm{\alpha}}$ different from that of \cite{2017bozec} and more convenient for the computation of the characteristic cycle of spherical Eisenstein sheaves (Theorem \ref{mainbij}).
\begin{theorem}
\label{mainirr}
 Let $\alpha\in\Z^+$. The irreducible components of $\mathscr{N}_{\alpha}$ are the $\Overline[1.5]{\mathscr{N}_{\bm{\alpha},\bm{\lambda}}}$ for $\bm{\alpha}\in HN(\alpha)$ and $\bm{\lambda}\in \mathscr{P}_{\bm{\delta}}$.
 
 Let $\bm{\alpha}\in HN(\alpha)$. The irreducible components of $\mathscr{N}_{\bm{\alpha}}$ are the $\overline{\mathscr{N}_{\bm{\alpha},\bm{\lambda}}}$ for $\bm{\lambda}\in \mathscr{P}_{\bm{\delta}}$.
\end{theorem}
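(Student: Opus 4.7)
The plan is to reduce the theorem, via two successive stratifications, to a dimension count on the nilpotent cone of torsion sheaves. The first assertion follows from the second: by Laumon's Lagrangianity theorem, $\mathscr{N}_\alpha$ is equidimensional of stack dimension $\dim\mathfrak{Coh}_\alpha(X)=0$, so every irreducible component of $\mathscr{N}_\alpha$ is the closure in $\mathscr{N}_\alpha$ of a top-dimensional irreducible component of some $\pi_{\alpha,\mathscr{N}}^{-1}(\mathfrak{Coh}_{\bm{\alpha}}(X))$, that is, by definition, of an irreducible component of some $\mathscr{N}_{\bm{\alpha}}$. It therefore suffices to fix $\bm{\alpha}\in HN(\alpha)$ and prove the second assertion.

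To reduce to the semistable case, I would use the following observation, specific to elliptic curves: since $\Omega_X\cong\mathcal{O}_X$, a nilpotent Higgs field is a genuine nilpotent endomorphism $\phi\in\End(\mathcal{F})$, and any such $\phi$ automatically preserves the Harder--Narasimhan filtration $0=\mathcal{F}_0\subset\mathcal{F}_1\subset\cdots\subset\mathcal{F}_s=\mathcal{F}$. Indeed, $\phi(\mathcal{F}_i)$ is a quotient of the semistable sheaf $\mathcal{F}_i$ of slope $\mu_i$, hence a subsheaf of $\mathcal{F}$ of slope $\geq\mu_i$, which by HN-maximality must lie inside $\mathcal{F}_i$. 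Passage to HN-graded pieces then defines a morphism
\[
\mathscr{N}_{\bm{\alpha}}\longrightarrow\prod_{i=1}^{s}\mathscr{N}^{ss}_{\alpha_i},
\]
whose fibers combine the parabolic-extension data for the underlying sheaf with the strictly-upper-triangular nilpotent off-diagonal maps; they are smooth and irreducible of constant dimension. Irreducible components of the source therefore correspond bijectively to $s$-tuples of components of the $\mathscr{N}^{ss}_{\alpha_i}$, so the problem reduces to the semistable case: for $\alpha\in\Z^+$ with $\delta=\gcd(\alpha)$, the irreducible components of $\mathscr{N}^{ss}_{\alpha}$ are exactly the $\overline{\mathscr{N}_{\alpha,\lambda}}$ for $\lambda\vdash\delta$.

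For the semistable case I would invoke a Fourier--Mukai autoequivalence of $D^{b}(\Coh(X))$ sending the subcategory of slope-$\mu$ semistable sheaves to the subcategory of torsion sheaves of length $\delta$. The stratification by $\xi$, being defined intrinsically from the multiset of isomorphism types of indecomposable summands, is preserved, and since $\Omega_X$ is trivial, nilpotent Higgs fields are intertwined with nilpotent endomorphisms. The question becomes: for which $\xi\in(\N^{\mathscr{P}})_{\delta}$ does the stratum $\mathfrak{Coh}_{(0,\delta),\xi}(X)$ contribute a top-dimensional component to $\mathscr{N}^{ss}_{(0,\delta)}$?

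The heart of the argument is the resulting stratum-by-stratum dimension count. Over a torsion sheaf $\mathcal{T}$ of type $\xi$, the generic fiber of $\pi_{(0,\delta),\mathscr{N}}$ is $\End(\mathcal{T})^{\nil}$, which factors across the finitely many support points of $\mathcal{T}$ into a product of nilpotent varieties of the endomorphism rings of the local direct sums $\bigoplus_{k}\mathcal{O}_{X,x}/\mathfrak{m}_{x}^{\lambda_k}$; its dimension is computed via the Jordan-centralizer description. Combined with the stack dimension of $\mathfrak{Coh}_{(0,\delta),\xi}(X)$, written as a symmetric product weighted by the dimensions of the automorphism groups of the summands, one finds that the total equals $0=\dim\mathscr{N}^{ss}_{(0,\delta)}$ precisely when every partition in the support of $\xi$ has length one, which is the definition of regularity. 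In the regular case the top-dimensional piece is irreducible, being fibered in affine spaces over a product of symmetric powers $\prod_{j\geq 1}X^{(\xi((j)))}$, and distinct $\lambda\vdash\delta$ yield disjoint locally closed strata, hence distinct irreducible components after closure. The main obstacle is this precise dimension matching: both the Higgs fiber dimension and the stratum dimension depend on subtle partition statistics, and isolating regular $\xi$ requires careful partition-theoretic bookkeeping together with verification that the parabolic-induction map at the Higgs level behaves well enough to transfer the semistable conclusion back to arbitrary $\bm{\alpha}$.
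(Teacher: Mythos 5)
Your proposal is essentially the paper's strategy: reduce to a stratum-by-stratum dimension count on the global nilpotent cone, using the Fourier--Mukai equivalence between slope-$\mu$ semistable sheaves and torsion sheaves, and the fact that on an elliptic curve a nilpotent Higgs field is an actual nilpotent endomorphism preserving the HN filtration. The one organizational difference worth noting is that you insert an intermediate reduction through the graded-pieces morphism $\mathscr{N}_{\bm{\alpha}}\to\prod_i\mathscr{N}^{ss}_{\alpha_i}$ at the Higgs level and then invoke the semistable case, whereas the paper works directly with the refined strata $\mathscr{N}_{\bm{\alpha},\bm{\xi}}=\pi_{\alpha,\mathscr{N}}^{-1}(\mathfrak{Coh}_{\bm{\alpha},\bm{\xi}}(X))$ and computes $\dim\mathscr{N}_{\bm{\alpha},\bm{\xi}}=\sum_{i}\sum_{\lambda}\xi_i(\lambda)(1-l(\lambda))$ in one stroke via Lemmas \ref{lemdimstrat} and \ref{lemdimfiber}: there the off-diagonal $\Hom$-dimensions in the Higgs-fiber exactly cancel the negative dimension of the extension stack, so only the semistable contribution survives, which is the same cancellation you observe. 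Your formulation is perhaps cleaner conceptually, but the assertion that irreducible components of $\mathscr{N}_{\bm{\alpha}}$ biject with $s$-tuples of components of the $\mathscr{N}^{ss}_{\alpha_i}$ needs the same justification (surjectivity, irreducibility and constancy of fiber dimension over each $\bm{\xi}$-stratum) that the paper supplies in Lemma \ref{dimcompirr}; you correctly flag this as the residual work, and it is handled, not skipped, in the source.
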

We order the set of irreducible components of $\mathscr{N}_{\alpha}$ as follows:
\[
 \Overline[1.5]{\mathscr{N}_{\bm{\alpha},\bm{\lambda}}}\leq \Overline[1.5]{\mathscr{N}_{\bm{\beta},\bm{\nu}}}\iff\left\{\begin{aligned}
             &\mathfrak{Coh}_{\bm{\beta}}(X)\subset \overline{\mathfrak{Coh}_{\bm{\alpha}}}(X)\text{ is a strict inclusion}\\
&\text{or}\\
&\bm{\beta}= \bm{\alpha} \text{ and for any $1\leq i\leq s$, }\nu_i\leq \lambda_i                                                                                                                                                   \end{aligned}
\right.
\]
The first condition on the containment of Harder-Narasimhan strata can be reformulated in purely combinatorial terms using Harder-Narasimhan polytopes (see \cite[\S1.1 e)]{MR2942792}). We order similarly the set of irreducible components $\overline{\mathscr{N}_{\bm{\alpha},\bm{\lambda}}}$ of $\mathscr{N}_{\bm{\alpha}}$ (it reduces to the anti-dominant order on the uplet of partitions $\bm{\lambda}$).

\subsubsection{Bijectivity of the characteristic cycle map}
\label{bijectivityccmap}
Let $\alpha\in\Z^+$. We complete the space $\Z[\Irr(\mathscr{N}_{\alpha})]$ of functions $\Irr(\mathscr{N}_{\alpha})\rightarrow\Z$ with respect to the dimension of the support and let $\widehat{\Z[\Irr(\mathscr{N}_{\alpha})]}$ be the completed space. More precisely, $\widehat{\Z[\Irr(\mathscr{N}_{\alpha})]}$ consists of all sums $\sum_{i\geq 0}a_i[\Lambda_i]$ where $a_i\in\Z$ and $\Lambda_i$ is an irreducible component of $\mathscr{N}_{\alpha}$. The category of spherical Eisenstein perverse sheaves on $\mathfrak{Coh}_{\alpha}(X)$ is denoted by $\mathcal{P}^{\alpha}$ (Section \ref{eisensteinsheaves}). Isomorphism classes of simple spherical Eisenstein perverse sheaves on $\mathfrak{Coh}_{\alpha}(X)$ are parametrized by pairs $(\bm{\alpha},\bm{\lambda})$ where $\bm{\alpha}=(\alpha_1,\hdots,\alpha_s)\in HN(\alpha)$ and $\bm{\lambda}=(\lambda_1,\hdots,\lambda_s)$ is a $s$-tuple of partitions such that $\lvert\lambda_i\rvert=\delta_i:=\gcd{\alpha_i}$ (Theorem \ref{schiffmanneisenstein}). We let $\mathscr{F}_{\bm{\alpha},\bm{\lambda}}$ be the corresponding simple perverse sheaf. Isomorphism classes of simple perverse sheaves in $\mathcal{P}^{\alpha}$ are ordered in the same way as irreducible components of $\mathscr{N}_{\alpha}$:
\[
 [\mathscr{F}_{\bm{\alpha},\bm{\lambda}}]\leq [\mathscr{F}_{\bm{\beta},\bm{\nu}}]\iff\left\{\begin{aligned}
&\mathfrak{Coh}_{\bm{\beta}}(X)\subset \overline{\mathfrak{Coh}_{\bm{\alpha}}}(X)\text{ is a strict inclusion}\\
&\text{or}\\
&\bm{\beta}=\bm{\alpha} \text{ and for any $1\leq i\leq s$, }\nu_i\leq \lambda_i \end{aligned}\right.
\]

We complete the Grothendieck group of $\mathcal{P}^{\alpha}$ in a similar manner, in terms of the dimension of the support. The completed Grothendieck group is then denoted $\widehat{K_0(\mathcal{P}^{\alpha})}$. This completion was defined in \cite{MR2922373,MR2942792} by defining an adic valuation on $K_0(\mathcal{P}^{\alpha})$.
\begin{theorem}
\label{mainbij}
 The characteristic cycle map
 \[
  CC:\widehat{K_0(\mathcal{P}^{\alpha})}\rightarrow \widehat{\Z[\Irr(\mathscr{N}_{\alpha})]}
 \]
is an isomorphism of $\Z$-modules. This isomorphism is lower unitriangular with respect to the basis of simple perverse sheaves on the left and the basis of irreducible components of $\mathscr{N}_{\alpha}$ on the right, when ordered as above.
\end{theorem}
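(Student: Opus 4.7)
The plan is to prove that $CC$ is lower unitriangular with respect to the two indexed bases; invertibility then follows formally, since a lower unitriangular endomorphism in the completed, dimension-graded setting is automatically an isomorphism. Using Theorems \ref{mainirr} and \ref{schiffmanneisenstein}, both sides of $CC$ are freely generated by classes indexed by the same set of pairs $(\bm{\alpha},\bm{\lambda})$ with $\bm{\alpha}\in HN(\alpha)$ and $\bm{\lambda}\in\mathscr{P}_{\bm{\delta}}$, and the two partial orders coincide under this identification, so the statement has a well-defined meaning.

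First I would verify that $CC$ is well-defined at the level of the completed Grothendieck groups. This amounts to checking that every simple spherical Eisenstein perverse sheaf in $\mathcal{P}^{\alpha}$ has singular support contained in $\mathscr{N}_{\alpha}$, which follows from the construction by parabolic induction starting from trivial local systems on stacks of torsion sheaves, combined with the usual fact that characteristic cycles are Lagrangian and that induction preserves nilpotency of the singular support. Compatibility with the dimension filtrations on both sides is then automatic, since for any simple $\mathscr{F}_{\bm{\alpha},\bm{\lambda}}$ the characteristic cycle is supported in $\pi_{\alpha,\mathscr{N}}^{-1}(\supp\mathscr{F}_{\bm{\alpha},\bm{\lambda}})$, whose dimension is controlled by $\dim\overline{\mathfrak{Coh}_{\bm{\alpha}}(X)}$.

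The heart of the proof is the computation of the leading term of $CC(\mathscr{F}_{\bm{\alpha},\bm{\lambda}})$. The support of $\mathscr{F}_{\bm{\alpha},\bm{\lambda}}$ is $\overline{\mathfrak{Coh}_{\bm{\alpha}}(X)}$, and by Schiffmann's description its restriction to the open HN stratum $\mathfrak{Coh}_{\bm{\alpha}}(X)$ becomes, after passing through the product description afforded by the HN filtration together with Atiyah's theorem, an intermediate extension built from data associated to the tuple of partitions $\bm{\lambda}$ (which enters via Schur functors and symmetric-group representations on étale covers of the semistable loci). The locus on which the resulting IC sheaf is smooth of rank $1$ is exactly the refined stratum $\mathfrak{Coh}_{\bm{\alpha},\bm{\xi}_{\bm{\lambda}}}(X)$. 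A local tangent--cotangent computation on this smooth stratum identifies the relative cotangent directions with precisely the nilpotent Higgs fields parametrizing $\mathscr{N}_{\bm{\alpha},\bm{\lambda}}$, producing the coefficient $1$ in front of $[\Overline[1.5]{\mathscr{N}_{\bm{\alpha},\bm{\lambda}}}]$.

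Any remaining component in $CC(\mathscr{F}_{\bm{\alpha},\bm{\lambda}})$ must then lie over $\overline{\mathfrak{Coh}_{\bm{\alpha}}(X)}\setminus\mathfrak{Coh}_{\bm{\alpha},\bm{\xi}_{\bm{\lambda}}}(X)$, hence either over a strictly smaller HN stratum $\mathfrak{Coh}_{\bm{\beta}}(X)\subsetneq\overline{\mathfrak{Coh}_{\bm{\alpha}}(X)}$, or, within the same HN stratum, over a strictly more degenerate partition-type stratum $\bm{\nu}<\bm{\lambda}$ (the antidominance on partitions translating the standard IC-boundary behaviour for irreducible symmetric-group representations). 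In both cases the resulting label is strictly smaller in the chosen order, proving lower unitriangularity. The hardest step will be the explicit local identification of the conormal cone to the $(\bm{\alpha},\bm{\xi}_{\bm{\lambda}})$-stratum with the Lagrangian $\mathscr{N}_{\bm{\alpha},\bm{\lambda}}$: this is a deformation-theoretic computation comparing infinitesimal variations of coherent sheaves preserving the refined stratification with the space of nilpotent endomorphisms, and it is here that the combinatorics of Atiyah's classification and the refined stratification of Section \ref{sstable} will need to be used most delicately.
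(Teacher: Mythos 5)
Your overall architecture is correct (unitriangularity $\Rightarrow$ invertibility in the completed setting, identification of index sets, leading coefficient $1$), but there is a genuine gap in the central step, and the route the paper actually takes is different in an essential way. The paper's proof does not attempt a direct ``local tangent--cotangent computation'' on the refined strata. Instead, it first proves a factorization lemma (Lemma~\ref{multss}) showing that on a given HN stratum the microlocal multiplicities of $\mathscr{F}_{\bm{\alpha},\bm{\lambda}}$ factor as the product of the multiplicities of the factor sheaves $\ICC(\mathscr{L}_{\lambda_i})$ on the individual semistable components; this uses that $q$ is smooth (hence non-characteristic), that external products multiply characteristic cycles, and the compatibility of $p_{\bm{\alpha}}$ with the refined stratifications. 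Then, for a single semistable component, Lemma~\ref{sstablemult} transports everything across the equivalence $\epsilon_\alpha\colon\mathfrak{Coh}_{(0,\delta)}(X)\to\mathfrak{Coh}_{(\alpha)}(X)$, observes that the problem is local so one may replace $X$ by $\mathbf{A}^1$, and lands in the classical Springer situation for $\mathfrak{gl}_\delta$. There the answer — including the statement that the coefficient of $[\overline{T^*_{\OO_\nu}\mathfrak{gl}_\delta}]$ in $CC(\ICC(\mathscr{L}_\lambda))$ vanishes unless $\nu\le\lambda$ — is obtained via the Fourier--Sato transform (exchanging $\ICC(\mathscr{L}_\lambda)$ with $\ICC(\overline{\OO_\lambda})$) together with the Evens--Mirkovi\'c theorem identifying the multiplicities as Kostka numbers $K_{\lambda\nu}$.

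The concrete gap in your argument is the sentence claiming that the remaining components of $CC(\mathscr{F}_{\bm{\alpha},\bm{\lambda}})$ within the open HN stratum necessarily lie over ``strictly more degenerate'' partition-type strata, justified as ``standard IC-boundary behaviour.'' The sheaves $\ICC(\mathscr{L}_{\lambda_i})$ on the semistable loci are supported everywhere (they are IC extensions of local systems on a dense open substack), so a support argument alone does not constrain which conormal varieties appear in the characteristic cycle. Establishing that only $\nu\le\lambda$ can contribute is precisely the content that requires Springer theory (Fourier--Sato transform plus Evens--Mirkovi\'c, or at least a specific Springer-theoretic vanishing argument). Without reducing to $\mathfrak{gl}_\delta$ and invoking this, the ``deformation-theoretic computation'' you defer to would have to reproduce this classical but nontrivial microlocal input; as written, the proposal does not supply it and would not go through. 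You should also state explicitly the factorization over the HN factors (Lemma~\ref{multss} in the paper); without it the reduction from $\mathcal{P}^{\bm{\alpha}}$ to products over $\mathcal{P}^{(\alpha_i)}$ is not justified.
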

The isomorphism of Theorem \ref{mainbij} induces a canonical bijection between the set of isomorphism classes of simple sheaves in $\mathcal{P}^{\alpha}$ and $\Irr(\mathscr{N}_{\alpha})$. This bijection is described by $\mathscr{F}_{\bm{\alpha},\bm{\lambda}}\leftrightarrow \Overline[1.5]{\mathscr{N}_{\bm{\alpha},\bm{\lambda}}}$.

\subsubsection{Perverse sheaves with nilpotent singular support on the stack of coherent sheaves on an elliptic curve}
In our last main result, which we prove in Section \ref{psnilsing}, we describe explicitly the simple perverse sheaves on the stack $\mathfrak{Coh}_{\alpha}(X)$, $\alpha\in\Z^+$, whose singular supports are nilpotent (that is, a union of some of the irreducible components of the global nilpotent cone $\mathscr{N}_{\alpha}$). We introduce some piece of notation. Let $X$ be an elliptic curve. If $\alpha=(r,d)\in\Z^+$ is coprime, then the semistable locus $\mathfrak{Coh}_{(\alpha)}(X)$ is isomorphic to the stack quotient $X/\G_m$, where the action of $\G_m$ is trivial (Section \ref{sstable}). Therefore, we have a family of local systems $\mathscr{L}_{z}$ on $\mathfrak{Coh}_{(\alpha)}(X)$ indexed by characters of $\pi_1(X)\simeq \Z^2$ if we work with the topological fundamental group, or $\pi_1^{\acute{e}t}(X)\simeq \widehat{\Z^2}$ when considering the \'etale fundamental group. We prove in Proposition \ref{extensionlocsys} that such a local system extends to the whole of $\mathfrak{Coh}_{\alpha}(X)$. Theorem \ref{miclocchar} can be formulated as follows. The second statement is a consequence of Lemma \ref{linebun}.

\begin{theorem}
 \label{microlocchar}
The perverse sheaves on $\mathfrak{Coh}_{\alpha}(X)$ having a nilpotent singular support are precisely the perverse sheaves which are the simple constituents of the inductions of perverse sheaves of the form $\ICC(\mathscr{L}_z)$ on various $\mathfrak{Coh}_{\alpha}(X)$, $\alpha\in\Z^+$ coprime as defined above. Moreover, it suffices to take $\alpha=(0,1)$ and $\alpha=(1,d)$ for $d$ in a subset of the integers not bounded below.
\end{theorem}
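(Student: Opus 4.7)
The plan is to prove both inclusions of the first claim separately and then to reduce the set of required coprime classes to $(0,1)$ and $(1,d)$ by a secondary induction on rank. I will use Theorem \ref{mainirr} as the main structural input, together with Proposition \ref{extensionlocsys}, which extends the local systems $\mathscr{L}_z$ from the semistable locus $\mathfrak{Coh}_{(\alpha)}(X)$ to all of $\mathfrak{Coh}_\alpha(X)$ when $\alpha$ is coprime.

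First I would show that, for coprime $\alpha$, the perverse sheaf $\ICC(\mathscr{L}_z)$ has nilpotent singular support. On the open substack $\mathfrak{Coh}_{(\alpha)}(X)\simeq X/\G_m$ this is immediate: the sheaf is smooth, and every Higgs field on an indecomposable semistable sheaf of coprime class on an elliptic curve is a scalar, so the nilpotent cone restricted to this locus equals the zero section, which already contains the singular support of any local system. The extension across the non-semistable strata is then a local computation controlled by Proposition \ref{extensionlocsys} and by the fact that those strata have strictly smaller dimension than the open one. I would then check that the induction functor along the Hecke diagram preserves the property of having nilpotent singular support: the cotangent lift of the induction correspondence is a Lagrangian correspondence which sends a product of nilpotent cones into the nilpotent cone of the bigger stack, since an extension of nilpotent Higgs sheaves is itself nilpotent. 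Together these two steps yield the first inclusion.

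For the converse, let $P$ be a simple perverse sheaf on $\mathfrak{Coh}_\alpha(X)$ with nilpotent singular support. Using the stratification
\[
\mathfrak{Coh}_\alpha(X)=\bigsqcup_{\bm{\alpha}\in HN(\alpha)}\bigsqcup_{\bm{\xi}\in(\N^{\mathscr{P}})_{\bm{\delta}}}\mathfrak{Coh}_{\bm{\alpha},\bm{\xi}}(X),
\]
$P$ is the intermediate extension of an irreducible local system on a unique stratum $\mathfrak{Coh}_{\bm{\alpha},\bm{\xi}}(X)$, and its characteristic variety contains the conormal to this stratum. This conormal must therefore lie inside $\mathscr{N}_\alpha$, which by Theorem \ref{mainirr} forces $\bm{\xi}$ to be regular, i.e.\ $\bm{\xi}=\bm{\xi}_{\bm{\lambda}}$ for some $\bm{\lambda}\in\mathscr{P}_{\bm{\delta}}$; on a non-regular stratum, any semistable direct summand whose associated partition has a part of length greater than one admits a non-nilpotent endomorphism, which promotes to a non-nilpotent Higgs field on a generic sheaf of the stratum and thus to a conormal direction escaping $\mathscr{N}_\alpha$. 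Atiyah's description of indecomposable semistables then identifies each regular stratum, up to finite \'etale covers, with a product of symmetric powers of $X$, whose irreducible local systems are tensor products of characters of $\pi_1(X)$ with irreducible representations of the relevant symmetric groups. Matching this with the decomposition of a box product of $\ICC(\mathscr{L}_{z_i})$ pushed forward through the induction diagram exhibits $P$ as a simple constituent of such an induction.

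Finally, to reduce to $\alpha=(0,1)$ and $\alpha=(1,d)$ with $d$ in a subset of $\Z$ not bounded below, I would induct on the rank. Any coprime $\alpha=(r,d)$ with $r\geq 2$ admits nontrivial extensions of stable sheaves of smaller coprime rank by a line bundle of suitably negative degree (the required degree decreases as $r$ grows), so $\ICC(\mathscr{L}_z)$ for such $\alpha$ appears as a simple constituent of an induction whose factors only involve $\alpha=(0,1)$, classes $\alpha=(1,d)$ with arbitrarily negative $d$, and coprime classes of strictly smaller rank; a recursion on rank then exhausts all coprime classes. The main obstacle throughout will be the conormal-escape statement in the non-regular case, where one must explicitly construct a non-nilpotent Higgs field on a generic sheaf of a non-regular stratum; once this is established, the rest of the argument is a bookkeeping of local systems via the explicit description of regular strata.
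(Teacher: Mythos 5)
Your overall strategy matches the paper's: the easy direction rests on Proposition \ref{extensionlocsys} (so that $\ICC(\mathscr{L}_z)$ is a shifted local system on all of $\mathfrak{Coh}_\alpha(X)$ and has singular support equal to the zero section) combined with the stability of nilpotent singular supports under the Lagrangian induction correspondence, exactly as in Proposition \ref{singsuppnil}; the converse restricts $P$ to its open supporting stratum, identifies the restriction as an $\ICC$ of a local system, and pulls back through the iterated induction diagram where $p$ is an isomorphism on the HN stratum and $q$ is a vector bundle stack; and your rank induction for the ``moreover'' is essentially Lemma \ref{linebun}.

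Where I would push back is on what you single out as the ``main obstacle'': the conormal-escape claim for non-regular strata. You propose to produce a non-nilpotent endomorphism from a semistable summand whose partition has a part of length greater than one and to ``promote'' it to a conormal direction. But a Higgs field $\theta\in\End(\mathcal{F})$ on a sheaf $\mathcal{F}$ of the stratum $S$ lies in the conormal $T^*_S\mathfrak{Coh}_\alpha(X)$ only if it annihilates the tangent directions along $S$; an arbitrary non-nilpotent $\theta$ (in particular a non-nilpotent projection onto one of several isomorphic indecomposable summands) has no reason to do so, and the step from ``a non-nilpotent endomorphism exists'' to ``the conormal escapes $\mathscr{N}_\alpha$'' is therefore not justified as written. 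In fact this whole step is redundant once you invoke Theorem \ref{mainirr}, which you already do: that theorem (through the dimension count of Lemma \ref{dimcompirr}) says that the irreducible components of $\mathscr{N}_\alpha$ are exactly the $\Overline[1.5]{\mathscr{N}_{\bm{\alpha},\bm{\lambda}}}$ with $\bm{\lambda}$ regular, so the image $\pi_\alpha(SS(P))=\supp P$ is automatically the closure of a regular stratum, and the conormal over a non-regular stratum can never lie in $\mathscr{N}_\alpha$ for the blunt reason that $\pi_{\alpha,\mathscr{N}}^{-1}$ of that stratum has negative dimension while the conormal is Lagrangian. I would drop the explicit-construction heuristic, which is both unnecessary and, as stated, gappy, and let Theorem \ref{mainirr} and Proposition \ref{miclocss} do the work as in the paper.
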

Such perverse sheaves are called in this paper \emph{twisted spherical Eisenstein perverse sheaves} as when all the local systems $\mathscr{L}_z$ are trivial (that is $z=1$), then we obtain the perverse sheaves considered by Schiffmann in \cite{MR2942792} and called there \emph{spherical Eisenstein perverse sheaves}.

The proofs of Theorems \ref{mainirr} and \ref{mainbij} will be given in Section \ref{Proofs}. The proof of Theorem \ref{microlocchar} will be given in Section \ref{psnilsing}.

\subsection{Strategy of proof}
\label{strategyproof}
\subsubsection{Theorem \ref{mainirr}}
\label{mainone}We compute the dimension of the locally closed substacks $\mathscr{N}_{\bm{\alpha},\bm{\xi}}$ of $\mathscr{N}_{\bm{\alpha}}$ in two steps. First, we assume that $\bm{\alpha}=(\alpha)$, so that we only deal with semistable sheaves of fixed rank and degree. By the classification due Atiyah of vector bundles over an elliptic curve, such coherent sheaves form an algebraic stack isomorphic to the stack of torsion sheaves of degree $\gcd({\alpha})$. The case of torsion sheaves corresponds to the Springer situation for which the description of the irreducible components is well-known. Then, we use the morphism
\[
 p_{\bm{\alpha}} : \mathfrak{Coh}_{\bm{\alpha}}(X)\rightarrow \prod_{i=1}^s\mathfrak{Coh}_{(\alpha_i)}(X)
\]
which sends a coherent sheaf of HN-type $\bm{\alpha}$ to the collection of semistable factors of the Harder-Narasimhan filtration. This morphism has very favourable properties (it is a vector bundle stack (\cite{MR3293805,MR2139694})-- or more precisely an iteration of vector bundle stacks), and the dimension of the fiber is easy to compute. This allows us to perform efficiently the computation of the dimension of $\mathscr{N}_{\bm{\alpha},\bm{\xi}}$. Combined with the irreducibility of $\mathfrak{Coh}_{\bm{\alpha},\bm{\xi}}(X)$, this will allow us to prove Theorem \ref{mainirr}. Our strategy is inspired by the work of Ringel who determined the irreducible components of Lusztig nilpotent variety for affine quivers (\cite{MR1648647,MR1676227}).

\subsubsection{Theorem \ref{mainbij}}
\label{maintwo}
Let $\alpha\in\Z^+$ and $\bm{\alpha}\in HN(\alpha)$. We let $\mathfrak{Coh}_{\geq \bm{\alpha}}(X)$ be the open substack of $\mathfrak{Coh}_{\alpha}(X)$ which is the union of the $\mathfrak{Coh}_{\bm{\beta}}(X)$ such that $\mathfrak{Coh}_{\bm{\alpha}}(X)\subset \overline{\mathfrak{Coh}_{\bm{\beta}}}(X)$. We let $j_{\bm{\alpha}} : \mathfrak{Coh}_{\geq \bm{\alpha}}(X)\rightarrow\mathfrak{Coh}_{\alpha}(X)$ be the (open) inclusion. Since we work with completions, using the poset structure on the set of Harder-Narasimhan strata (endowed with the containment order) and letting
\[
 \mathcal{P}^{\bm{\alpha}}=\{(j_{\bm{\alpha}})^*\mathscr{F} : \mathscr{F}\in\Ob(\mathcal{P}^{\alpha}), \supp\mathscr{F}=\overline{\supp\mathscr{F}\cap\mathfrak{Coh}_{\bm{\alpha}}(X)}\},
 \]
it suffices to show that the characteristic cycle map
\[
 CC : K_0(\mathcal{P}^{\bm{\alpha}})\rightarrow \Z[\Irr(\mathscr{N}_{\bm{\alpha}})]
 \]
is a unitriangular isomorphism. We proceed in two steps by first assuming $\bm{\alpha}=(\alpha)$ so that we work on the semistable locus. There we use again the structure of coherent sheaves on an elliptic curve which allows us to consider only torsion sheaves. Last, using the explicit description of the simple ojects of $\mathcal{P}^{\bm{\alpha}}$, and the morphism $p_{\bm{\alpha}}$, we are able to give a rather concrete description of the characteristic cycle of any perverse sheaf of the category $\mathcal{P}^{\bm{\alpha}}$. Proving the isomorphism is now easy.

    \subsubsection{Theorem \ref{microlocchar}} The key fact, which is true not only for elliptic curves but also for curves of genus $g\geq 2$, is that any local system on some connected component of the semisimple locus extends to the whole connected component of $\mathfrak{Coh}(X)$. This can be proved by codimension considerations and by using the determinant morphism to the Picard stack, which provides a retraction to the inclusion of the Picard stack in the stack of rank one coherent sheaves. This allows us to prove that any induction of perverse sheaves as in Theorem \ref{microlocchar} has nilpotent singular support. The other implication of Theorem \ref{microlocchar} does not present difficulties and follows from the consideration of the restriction of the induction diagram to the sheaves of HN-type $\bm{\alpha}\in HN(\alpha)$ \eqref{iteratedind}.

\subsection{Contents of the paper}
\label{contents}
In Section \ref{Cohell}, we recall some standard facts concerning the stack of coherent sheaves on curves with an emphasis on the case of elliptic curves. We follow the presentation of Schiffmann (\cite{MR2942792}). We define stratifications of the semistable loci and glue them using the Harder-Narasimhan stratification to obtain a refined stratification of any Harder-Narasimhan stratum. In Section \ref{globnil}, we introduce the stack of Higgs sheaves and the global nilpotent cone. We give a description of the irreducible components of the global nilpotent cone and of its semistable locus for an elliptic curve. A partial order on the set of its irreducible components is defined combining the natural order on Harder-Narasimhan strata given by the dominance order and the antidominance order on partitions. We investigate in small ranks the problem of the relation of the two parametrizations of the irreducible components of the global nilpotent cone of an elliptic curve: the parametrization of Bozec and the one described here. This underlies nontrivial combinatorics and piecewise linear structures. In Section \ref{eisensteinsheaves}, we recall the induction and restriction functors on the derived category of constructible sheaves and the definition of spherical Eisenstein sheaves. In this generality, these functors appear in the work of Schiffmann (\cite{MR2942792}). They are defined in analogy with the quiver induction and restriction functors considered by Lusztig (\cite{MR1088333}) and with the induction functor studied by Laumon (\cite{MR899400,MR1044822}).  We recall the explicit description of simple Eisenstein perverse sheaves in terms of a local system on a smooth part of their support due to Schiffmann. This description allows us to describe the multiplicities of the irreducible components of the global nilpotent cone in the singular support of the restriction of a simple Eisenstein sheaf to its supporting Harder-Narasimhan stratum. Perverse sheaves on the nilpotent cone of $\mathfrak{gl}_n$ and the Kostka numbers appear in this description. In Section \ref{Proofs}, we detail the proof of the first two main theorems, namely the description of the irreducible components of the global nilpotent cone (Theorem \ref{mainirr}) and the lower unitriangularity of the characteristic cycle map from the category of spherical Eisenstein sheaves (Theorem \ref{mainbij}). In Section \ref{nilsingps}, we introduce the material needed to prove the third main result, Theorem \ref{microlocchar}, which gives an explicit description of the simple perverse sheaves on the stack of coherent sheaves on an elliptic curve having singular support in the global nilpotent cone. Along the way, we determine codimension one Harder-Narasimhan strata and show that local systems on the semi-stable locus always extend to the whole stack of coherent sheaves, for any curve.

\subsection{Notations and conventions}
\label{notations}
We let $\mathscr{P}$ be the set of partitions of positive integers. If $j\geq 1$, the unique partition of $j$ of length one is $(j)$. The set $\mathscr{P}$ is naturally ordered by the dominance order. If $d\geq 1$, $\mathscr{P}_d$ is the subset of partitions of $d$. If $\lambda\in\mathscr{P}_d$,  $\lvert\lambda\rvert=d$ and $l(\lambda)$ is the length of $\lambda$ (the number of nonzero parts). If $\bm{d}=(d_1,\hdots,d_s)$ is a $s$-uplet, $\mathscr{P}_{\bm{d}}=\{(\lambda_1,\hdots,\lambda_s)\in\mathscr{P}^s\mid \lambda_i\in\mathscr{P}_{d_i}\}$. We let $\N^{\mathscr{P}}$ be the set of functions $\mathscr{P}\rightarrow\N$. For $d\in\N$, $(\N^{\mathscr{P}})_d$ is the set of functions $\xi\in\N^{\mathscr{P}}$ such that $\sum_{\lambda\in\mathscr{P}}\xi(\lambda)\lvert\lambda\rvert=d$. If $\xi\in(\N^{\mathscr{P}})_d$ is such that for any $\lambda\in\mathscr{P}$, $\xi(\lambda)\neq 0\implies l(\lambda)=1$, the datum of $\xi$ is equivalent to the datum of the partition $\lambda=(j^{\xi((j))}:j\geq 1)\in\mathscr{P}_{d}$. In this case, we write $\xi=\xi_{\lambda}$. If $\bm{d}=(d_1,\hdots,d_s)$ is a $s$-uplet, $(\N^{\mathscr{P}})_{\bm{d}}$ is the set of $s$-uplets of functions $(\xi_1,\hdots,\xi_s)$ such that for $1\leq i\leq s$, $\xi_i\in(\N^{\mathscr{P}})_{d_i}$. If for any $1\leq i\leq s$ and any $\lambda\in\mathscr{P}$, $\xi_i(\lambda)\neq 0\implies l(\lambda)=1$, the datum of $\bm{\xi}$ is equivalent to the datum of the $s$-uplet of partitions $\bm{\lambda}=(\lambda_1,\hdots,\lambda_s)\in\mathscr{P}_{\bm{d}}$, where $\lambda_i=(j^{\xi_i((j))}:j\geq 1)$. In this case, we write $\bm{\xi}=\bm{\xi}_{\bm{\lambda}}$. We denote $\mathfrak{Coh}(X)$ the stack of coherent sheaves on a given fixed curve $X$. We let $\Z^+=\{(r,d)\in\Z^2\mid r>0 \text{ or }r=0 \text{ and }d>0\}$. For $\alpha\in\Z^+$, $\mathfrak{Coh}_{\alpha}(X)$ is the substack of coherent sheaves of class $\alpha$ and $\mu(\alpha)=\frac{d}{r}$ if $\alpha=(r,d)$. The set of Harder-Narasimhan types of class $\alpha$ is $HN(\alpha)$. Its elements are uplets $\bm{\alpha}=(\alpha_1,\hdots,\alpha_s)$ for some $s\geq 1$, $\alpha_i\in\Z^+$, $\mu(\alpha_1)>\hdots>\mu(\alpha_s)$, $\sum_{i=1}^s\alpha_i=\alpha$. For $\alpha=(r,d)\in\Z^2$, $\delta=\gcd(\alpha)=\gcd(r,d)$. For $\bm{\alpha}=(\alpha_1,\hdots,\alpha_s)\in(\Z^2)^s$, $\bm{\delta}=\gcd(\bm{\alpha})=(\gcd(\alpha_1),\hdots,\gcd(\alpha_s))$. For $\alpha\in\Z^+$ and $\bm{\alpha}\in HN(\alpha)$, $\mathfrak{Coh}_{\bm{\alpha}}(X)$ denotes the corresponding Harder-Narasimhan stratum. We will consider the derived category of constructible sheaves on the stack of coherent sheaves on a smooth projective curve. Each connected component $\mathfrak{Coh}_{\alpha}(X)$ of this stack has a presentation as an increasing union of open substacks which are quotient stacks, $\mathfrak{Coh}_{\alpha}(X)=\varinjlim_{n}\limits X_n/G_n$ where $X_n$ is an open subvariety of some Quot scheme and $G_n$ some (reductive) algebraic group. The derived category of the Artin stack $\mathfrak{Coh}_{\alpha}(X)$ can be dealt with using the formalism of equivariant derived categories (\cite{MR1299527}): $D_c(\mathfrak{Coh}_{\alpha}(X))=\varprojlim_n\limits D_{c,G_n}(X_n)$. If $X$ is any algebraic variety and $n\geq 1$ an integer, we let $\Delta\subset X^n$ and $\Delta\subset S^nX$ be the big diagonals (the closed subvarieties of $n$-uplets $(x_1,\hdots,x_n)$ such that two or more coordinates are equal).

\section{The moduli stack of coherent sheaves on elliptic curves}
\label{Cohell}

\subsection{Coherent sheaves on a curve}
\label{cohcurve}
We recall here the fundamental properties of the stack of coherent sheaves on a curve that we will need. We consider a smooth projective curve $X$ over the field of complex numbers $\C$. We let $\Coh(X)$ be the category of coherent sheaves on $X$. This is an abelian category of homological dimension one. To a coherent sheaf $\mathcal{F}$, we can assign its rank $r$ and its degree $d$. The pair $\alpha=(r,d)$ belongs to
\[
 \Z^+=\{(r,d)\in\Z^2\mid r>0 \text{ or }r=0, d>0\}.
\]
and is called the \emph{type} of $\mathcal{F}$. This assignment yields a group homomorphism
\[
 K_0(\Coh(X))\rightarrow \Z^2.
\]
For a coherent sheaf $\mathcal{F}$ on $X$, we let $[\mathcal{F}]\in\Z^+$ be the corresponding pair. The Euler form of the category $\Coh(X)$ factors through this morphism: for any coherent sheaves $\mathcal{F}, \mathcal{G}$ on $X$ such that $[\mathcal{F}]=(r_1,d_1)$ and $[\mathcal{G}]=(r_2,d_2)$,
\begin{equation}
\label{eulerform}
 \langle\mathcal{F},\mathcal{G}\rangle=\dim\Hom(\mathcal{F},\mathcal{G})-\dim\Ext^1(\mathcal{F},\mathcal{G})=(1-g)r_1r_2+(r_1d_2-r_2d_1)
\end{equation}
if $g$ is the genus of $X$, thanks to Serre duality and the Riemann-Roch formula. When $X$ is an elliptic curve, the first term disappears. There is a smooth stack $\mathfrak{Coh}(X)$ parametrizing the objects of $\Coh(X)$. It has an infinite number of connected components indexed by $\Z^+$:
\[
 \mathfrak{Coh}(X)=\bigsqcup_{\alpha\in\Z^+}\mathfrak{Coh}_{\alpha}(X)
\]
where the connected component $\mathfrak{Coh}_{\alpha}(X)$ is an Artin stack locally of finite type which parametrizes coherent sheaves on $X$ of class $\alpha$. The dimension of $\mathfrak{Coh}_{\alpha}(X)$ is $r^2(g-1)$ if $\alpha=(r,d)$. In particular, it is of dimension $0$ when $X$ is an elliptic curve.
\subsection{The category and moduli stack of torsion sheaves}
\label{torstack}

\subsubsection{Torsion sheaves on a curve}
The moduli stack of torsion sheaves is the union $\mathfrak{Tor}(X)$ of the connected components $\mathfrak{Coh}_{(0,d)}(X)$, $d>0$ of $\mathfrak{Coh}(X)$. Fix $d> 0$. There is a support map:
\[
\chi : \mathfrak{Coh}_{(0,d)}(X)\rightarrow S^dX
\]
to the $d$-th symmetric power of $X$ sending a coherent sheaf to its support (with multiplicities).

The category of torsion sheaves on $X$ is denoted $\Tor(X)$. For any closed point $x\in X$, we let $\Tor_x(X)$ be the subcategory of torsion sheaves supported at $x$. It is equivalent to the category of nilpotent representations of the Jordan quiver (the quiver with one vertex and one loop). In other words, the datum of a torsion sheaf of degree $d$ supported at a given point is the same thing as the datum of a nilpotent $d\times d$ matrix. In particular, isomorphism classes of torsion sheaves supported at $x$ are indexed by partitions according to the sizes of the Jordan blocks. We let $T_{x,\lambda}$ be the torsion sheaf associated to the partition $\lambda\in\mathscr{P}$.

\subsubsection{Stratification of the stack of torsion sheaves on a curve}
\label{strattorsion}
Let $d\geq 0$. The stack $\mathfrak{Coh}_{(0,d)}(X)$ admits a stratification indexed by the set $(\N^{\mathscr{P}})_{d}$ of functions $\xi : \mathscr{P}\rightarrow \N$ satisfying $\sum_{\lambda\in\mathscr{P}}\xi(\lambda)\lvert\lambda\rvert=d$. Namely, we write
\[
 \mathfrak{Coh}_{(0,d)}(X)=\bigsqcup_{\xi\in(\N^{\mathscr{P}})_d}\mathfrak{Coh}_{(0,d),\xi}(X)
\]
where $\mathfrak{Coh}_{(0,d),\xi}(X)$ is defined as follows. We pick partitions $\lambda_1,\hdots,\lambda_s\in\mathscr{P}$ so that in this collection, any partition $\lambda\in\mathscr{P}$ appears precisely $\xi(\lambda)$ times. Then, geometric points of $\mathfrak{Coh}_{(0,d),\xi}(X)$ parametrize by definition isomorphism classes of torsion sheaves $T$ on $X$ isomorphic to a direct sum
\[
 \bigoplus_{i=1}^sT_{x_i,\lambda_i}
\]
for pairwise distinct points $x_i\in X$. This stratification is analogous to the Jordan stratification of $\mathfrak{gl}_d$ (the case when $X=\A^1$) and also to the stratification of the regular locus of the representation stack of affine quivers (\cite{MR1676227}) which has been of great help to the author in the understanding of perverse sheaves with nilpotent singular support (see \cite{2020henn}).
\subsection{Category and stack of semistable sheaves of fixed rank and degree}
\label{sstable}
Let $\alpha=(r,d)\in\Z^+$. For a nonzero coherent sheaf $\mathcal{F}$ on $X$ of rank $r$ and degree $d$, we define its slope $\mu(\mathcal{F})=\frac{d}{r}$ with the convention that the slope of a torsion sheaf is infinite. We say that $\mathcal{F}$ is semistable if for any nonzero proper subsheaf $\mathcal{G}\subset \mathcal{F}$, $\mu(\mathcal{G})\leq \mu(\mathcal{F})$. Stable sheaves are those $\mathcal{F}$ for which the inequality is always strict. Given a fixed slope $\mu\in\Q$, the category of semistable coherent sheaves on $X$ of slope $\mu$ is abelian, noetherian and artinian. There is a moduli stack of semistable coherent sheaves of slope $\mu$, $\mathfrak{Coh}^{\mu}(X)$. It is an open substack of $\mathfrak{Coh}(X)$. If $\alpha=(r,d)\in\Z^+$ is such that $\mu=\frac{d}{r}$ and $\gcd(r,d)=1$, we have a decomposition in connected components of $\mathfrak{Coh}^{\mu}(X)$:
\[
 \mathfrak{Coh}^{\mu}(X)=\bigsqcup_{l\geq 1}\mathfrak{Coh}_{(l\alpha)}(X),
\]
where for any $\beta\in\Z^+$, $\mathfrak{Coh}_{(\beta)}(X)$ is the open subtack of $\mathfrak{Coh}_{\beta}(X)$ classifying semistable coherent sheaves of class $\beta$. If $X$ is an elliptic curve, there is an equivalence of categories between $\Tor(X)=\Coh^{\infty}(X)$ and $\Coh^{\mu}(X)$, which can be constructed using mutations or equivalently, Fourier-Mukai transforms (see \cite[\S 1.1 d)]{MR2942792}, \cite{MR1074778}). We let $\mathfrak{Tor}(X)$ be the stack of torsion sheaves on $X$. It coincides with $\mathfrak{Coh}^{\infty}(X)$. The equivalence sends semistable sheaves of rank $r$ and degree $d$ with $\mu=\frac{d}{r}$ to torsion sheaves of degree $\delta=\gcd(r,d)$. It also induces isomorphisms at the level of stacks
\[
 \epsilon_{\mu} : \mathfrak{Tor}(X)\rightarrow \mathfrak{Coh}^{\mu}(X)
\]
and
\[
 \epsilon_{\alpha} : \mathfrak{Coh}_{(0,\delta)}(X)\rightarrow \mathfrak{Coh}_{(\alpha)}(X).
 \]
Thanks to the isomorphism $\epsilon_{\alpha}$, we transport the stratification of $\mathfrak{Coh}_{(0,\delta)}(X)$ to $\mathfrak{Coh}_{(\alpha)}(X)$:
\[
 \mathfrak{Coh}_{(\alpha)}(X)=\bigsqcup_{\xi\in(\N^{\mathscr{P}})_{\delta}}\mathfrak{Coh}_{(\alpha),\xi}(X)
\]
where $\mathfrak{Coh}_{(\alpha),\xi}(X)=\epsilon_{\alpha}(\mathfrak{Coh}_{(0,\delta),\xi}(X))$.

We have the following properties for semistable sheaves of different slopes which will be useful for understanding the induction of perverse sheaves. If $\nu>\mu\in\Q$ are two slopes, $\mathcal{F}\in\Coh^{\mu}(X)$ and $\mathcal{G}\in\Coh^{\nu}(X)$, then
\[
 \Hom_{\OO_X}(\mathcal{G},\mathcal{F})=0.
\]
Since the canonical bundle of an elliptic curve is trivial, Serre duality implies that
\begin{equation}
\label{extvan}
 \Ext^1_{\OO_X}(\mathcal{F},\mathcal{G})=0.
\end{equation}

\subsection{A refinement of the Harder-Narasimhan stratification}
\label{refstrat}

\subsubsection{The Harder-Narasimhan stratification}
\label{hnstrat}
Let $\mathcal{F}$ be a coherent sheaf on $X$. It admits a unique filtration $0=\mathcal{F}_0\subset \mathcal{F}_1\subset \hdots\subset \mathcal{F}_s=\mathcal{F}$ such that for any $1\leq i\leq s$, the quotient $\mathcal{F}_i/\mathcal{F}_{i-1}$ is semistable and if $\mu_i$ denotes its slope, $\mu_1>\mu_2>\hdots>\mu_s$. It is called the Harder-Narasimhan stratification of $\mathcal{F}$. The Harder-Narasimhan type of $\mathcal{F}$ is then the collection $([\mathcal{F}_i/\mathcal{F}_{i-1}])_{1\leq i\leq s}$. We let $HN(\alpha)$ be the set of all possible Harder-Narasimhan types of coherent sheaves of type $\alpha$. For $\alpha\in\Z^+$, the Harder-Narasimhan stratification of $\mathfrak{Coh}_{\alpha}(X)$ is then
\[
 \mathfrak{Coh}_{\alpha}(X)=\bigsqcup_{\bm{\alpha}\in HN(\alpha)}\mathfrak{Coh}_{\bm{\alpha}}(X),
\]
where $\mathfrak{Coh}_{\bm{\alpha}}(X)$ is the locally closed substack parametrizing coherent sheaves on $X$ of HN-type $\bm{\alpha}$. In particular, if $\bm{\alpha}=(\alpha)$ for some $\alpha\in\Z^+$, then the notation $\mathfrak{Coh}_{\bm{\alpha}}(X)=\mathfrak{Coh}_{(\alpha)}(X)$ is the one introduced in Section \ref{sstable}.

\subsubsection{Refinement of the Harder-Narasimhan stratification for elliptic curves}
\label{refhnstrat}
Let $\alpha=(r,d)\in\Z^+$ and $\bm{\alpha}=(\alpha_1,\hdots,\alpha_s)\in HN(\alpha)$. There is a morphism of stacks
\[
 p_{\bm{\alpha}} : \mathfrak{Coh}_{\bm{\alpha}}(X)\rightarrow\prod_{i=1}^s\mathfrak{Coh}_{(\alpha_i)}(X)
\]
which sends a coherent sheaf of HN-type $\bm{\alpha}$ to the collection of the subquotients of the Harder-Narasimhan filtration. If $X$ is an elliptic curve, the Harder-Narasimhan filtration splits (noncanonically): if $\mathcal{F}$ is a coherent sheaf on $X$ and $(\mathcal{F}_i)_{1\leq i\leq s}$ is its Harder-Narasimhan filtration,
\[
 \mathcal{F}\simeq \bigoplus_{i=1}^s\mathcal{F}_i/\mathcal{F}_{i-1}.
\]
This is a consequence of the Ext-vanishing property \eqref{extvan}. Therefore, the fiber of $p_{\bm{\alpha}}$ over a $\C$-point $(\mathcal{F}_1,\hdots,\mathcal{F}_s)$ can be identified with the stack quotient
\[
 \pt/\bigoplus_{j<i}\Hom_{\OO_X}(\mathcal{F}_i/\mathcal{F}_{i-1},\mathcal{F}_j/\mathcal{F}_{j-1}).
\]
In particular, its dimension is
\begin{equation}
\label{dimfiberp}
  d_{\bm{\alpha}}=-\sum_{j<i}(r_id_j-r_jd_i),
\end{equation}
where $\alpha_i=(r_i,d_i)=[\mathcal{F}_i/\mathcal{F}_{i-1}]$.

We define a stratification of $\mathfrak{Coh}_{\bm{\alpha}}(X)$:
\[
 \mathfrak{Coh}_{\bm{\alpha}}(X)=\bigsqcup_{\bm{\xi}\in (\N^{\mathscr{P}})_{\bm{\delta}}}\mathfrak{Coh}_{\bm{\alpha},\bm{\xi}}(X)
\]
indexed by $(\N^{\mathscr{P}})_{\bm{\delta}}=\{\bm{\xi}=(\xi_1,\hdots,\xi_s)\in(\N^{\mathscr{P}})^s\mid \sum_{\lambda\in\mathscr{P}}\xi_i(\lambda)\lvert\lambda\rvert=\gcd(\alpha_i)\text{ for any $1\leq i\leq s$}\}$, where
\[
 (\mathfrak{Coh}_{\bm{\alpha},\bm{\xi}}(X))=p_{\bm{\alpha}}^{-1}\left(\prod_{i=1}^s\mathfrak{Coh}_{(\alpha_i),\xi_i}(X)\right).
\]
In concrete terms, the $\C$-points of $\mathfrak{Coh}_{\bm{\alpha},\bm{\xi}}(X)$ are the isomorphism classes of coherent sheaves on $X$ having HN-type $\bm{\alpha}$ and such that the $i$-th subquotient of the HN-filtration belongs to $\mathfrak{Coh}_{(\alpha_i),\xi_i}(X)$.

\subsection{$\SL_2(\Z)$-action on the stack of coherent sheaves}
\label{slaction}
It is a well-known fact that Harder-Narasimhan strata of $\mathfrak{Coh}(X)$ can be indexed by convex path in $\Z^+$ starting from the origin. Namely, for $\alpha\in\Z^+$ and $\bm{\alpha}=(\alpha_1,\hdots,\alpha_s)\in HN(\alpha)$, we take the piecewise affine path $\bigsqcup_{i=0}^{s-1}[\sum_{j=0}^{i-1}\alpha_{s-j},\sum_{j=0}^i\alpha_{s-j}]$ with the convention that $\alpha_0=0$. We denote $\bm{p}_{\bm{\alpha}}$ this path. The convexity comes from the condition on the successive slopes of a Harder-Narasimhan type. The inclusion of Harder-Narasimhan strata can be described in terms of convex paths: for any $\bm{\alpha},\bm{\beta}\in HN(\alpha)$, $\mathfrak{Coh}_{\bm{\beta}}(X)\subset \overline{\mathfrak{Coh}_{\bm{\alpha}}(X)}$ if and only if $\bm{p}_{\bm{\beta}}$ lies below $\bm{p}_{\bm{\alpha}}$. This defines the order relation $\mathfrak{Coh}_{\bm{\beta}}(X)\subset \overline{\mathfrak{Coh}_{\bm{\alpha}}(X)}\iff \bm{\beta}\geq\bm{\alpha}$ on $HN(\alpha)$ (the dense stratum is the smallest element of $HN(\alpha)$). The group $\SL_2(\Z)$ acts naturally on $\Z^2$ by $\Z$-linear automorphisms. Therefore, it acts on tuples $(\alpha_1,\hdots,\alpha_s)$ by $\gamma\cdot (\alpha_1,\hdots,\alpha_s)=(\gamma\cdot\alpha_1,\hdots,\gamma\cdot\alpha_s)$. If $A\subset HN(\alpha)$ is some set of Harder-Narasimhan types such that for any $\bm{\beta}\in HN(\alpha)$ and $\bm{\alpha}\in A$, $\bm{\beta}\leq \bm{\alpha}$ implies $\bm{\beta}\in A$ and $\gamma\in \SL_2(\Z)$ is such that $\gamma\cdot A=\{\gamma\cdot\bm{\alpha} : \bm{\alpha}\in A\}\subset HN(\gamma\cdot\alpha)$ (this is the condition that $\gamma$ sends $\bm{p}_{\bm{\alpha}}$ to a path contained in $\Z^+$), then $\gamma$ induces an isomorphism between the open substacks $\mathfrak{Coh}_A=\bigsqcup_{\bm{\alpha}\in A}\mathfrak{Coh}_{\bm{\alpha}}(X)$ of $\mathfrak{Coh}_{\alpha}(X)$ and $\mathfrak{Coh}_{\gamma\cdot A}=\bigsqcup_{\bm{\alpha}\in A}\mathfrak{Coh}_{\gamma\cdot\bm{\alpha}}(X)$ of $\mathfrak{Coh}_{\gamma\cdot\alpha}(X)$,
\begin{equation}
\label{isostra}
 i_{\gamma}:\mathfrak{Coh}_A\rightarrow \mathfrak{Coh}_{\gamma\cdot A}
\end{equation}
already used in the proof of \cite[Proposition 6.7]{MR2942792}.

\section{The global nilpotent cone}
\label{globnil}
In this section, we describe the irreducible components of the global nilpotent cone in terms of the stratification of Section \ref{refstrat}.

\subsection{Higgs sheaves and the global nilpotent cone}
\label{higgssheaves}
We refer to \cite[\S 2.3]{2018arXivss} for more details and general properties of the stack of Higgs sheaves. A Higgs sheaf on $X$ is a pair $(\mathcal{F},\theta)$ of a coherent sheaf $\mathcal{F}$ and a morphism $\theta : \mathcal{F}\rightarrow \mathcal{F}\otimes_{\OO_X}K$ where $K$ is the canonical bundle of $X$. The moduli stack of Higgs sheaves on $X$ is denoted $\mathfrak{Higgs}(X)$. It has an infinite number of connected components indexed by $\Z^+$:
\[
 \mathfrak{Higgs}(X)=\bigsqcup_{\alpha\in\Z^+}\mathfrak{Higgs}_{\alpha}(X)
\]
where $\mathfrak{Higgs}_{\alpha}(X)$ parametrizes those Higgs sheaves on $X$ whose underlying coherent sheaf is of class $\alpha$. There is a projection
\[
 \pi : \mathfrak{Higgs}(X)\rightarrow \mathfrak{Coh}(X)
\]
forgetting the Higgs field which is compatible with the decompositions into connected components. For $\alpha\in\Z^+$, $\pi_{\alpha} : \mathfrak{Higgs}_{\alpha}(X)\rightarrow \mathfrak{Coh}_{\alpha}(X)$ denotes the restriction of $\pi$. It is the cotangent stack of $\mathfrak{Coh}_{\alpha}(X)$. Moreover, $\mathfrak{Higgs}_{\alpha}(X)$ is locally of finite type, of dimension $-2\langle\alpha,\alpha\rangle$. In particular, it is of dimension $0$ for an elliptic curve. The global nilpotent cone is the closed substack $\mathscr{N}$ parametrizing nilpotent Higgs sheaves, that is pairs $(\mathcal{F},\theta)$ such that the composition
\[
 \mathcal{F}\xrightarrow{\theta}\mathcal{F}\otimes K\xrightarrow{\theta\otimes\id_K}\mathcal{F}\otimes K^{\otimes 2}\xrightarrow{\theta\otimes \id_{K^{\otimes 2}}}\hdots\xrightarrow{\theta\otimes \id_{K^{\otimes{(n-1)}}}} \mathcal{F}\otimes K^{\otimes n}
\]
vanishes for $n$ sufficiently large. We write it $\mathscr{N}=\bigsqcup_{\alpha\in\Z^+}\mathscr{N}_{\alpha}$. For any $\alpha\in\Z^+$, $\mathscr{N}_{\alpha}$ is a Lagrangian substack of $\mathfrak{Higgs}_{\alpha}(X)$ (\cite{MR962524,MR1853354}). The restriction of $\pi_{\alpha}$ to $\mathscr{N}_{\alpha}$ is denoted $\pi_{\alpha,\mathscr{N}}$.

\subsection{Irreducible components of the semistable nilpotent cone}
\label{irrcompnilconess}
We introduce the notion of stability for Higgs sheaves and show that for elliptic curve it coincides with the stability of the underlying coherent sheaf. Then, we give a parametrization of the irreducible components of the semistable locus of the elliptic global nilpotent cone (Corollary \ref{irrcompss}). For higher genus, see \cite{2017bozec}.

Let $(\mathcal{F},\theta)$ be a Higgs sheaf on $X$. We say that it is semistable if for any $0\subsetneq \mathcal{G}\subsetneq \mathcal{F}$ such that $\theta(\mathcal{G})\subset\mathcal{G}\otimes K$, $\mu(\mathcal{G})\leq \mu(\mathcal{F})$, and stable if these inequalities are always strict.

\begin{lemma}
\label{stablehiggs}
 Assume $X$ is an elliptic curve. A Higgs sheaf on $X$ is semistable (resp. stable) if and only if the underlying coherent sheaf is semistable (resp. stable).
\end{lemma}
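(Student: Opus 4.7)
The plan is to prove the easy direction first, then reduce the hard direction to the observation that on an elliptic curve $K_X\simeq \OO_X$, so a Higgs field is simply an endomorphism of $\mathcal{F}$, together with the standard slope-comparison vanishing for Hom between semistable sheaves.

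For the easy direction, if $\mathcal{F}$ is (semi)stable as a coherent sheaf, then any $\theta$-invariant proper nonzero subsheaf $\mathcal{G}\subset\mathcal{F}$ is in particular a subsheaf, so $\mu(\mathcal{G})\leq\mu(\mathcal{F})$ (resp. $<$), and Higgs (semi)stability of $(\mathcal{F},\theta)$ follows immediately.

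For the converse, I will show the contrapositive. Suppose $\mathcal{F}$ is not semistable, and let $0=\mathcal{F}_0\subsetneq\mathcal{F}_1\subsetneq\hdots\subsetneq\mathcal{F}_s=\mathcal{F}$ be its Harder--Narasimhan filtration, with successive slopes $\mu_1>\mu_2>\hdots>\mu_s$. The claim is that $\mathcal{F}_1$ is $\theta$-stable. Using the trivialization $K\simeq\OO_X$, the field $\theta$ becomes an endomorphism of $\mathcal{F}$. Restricting to $\mathcal{F}_1$ gives $\theta|_{\mathcal{F}_1}:\mathcal{F}_1\to\mathcal{F}$. Let $j$ be minimal with $\theta(\mathcal{F}_1)\subset \mathcal{F}_j$; if $j\geq 2$, then the induced composition $\mathcal{F}_1\to \mathcal{F}_j\to\mathcal{F}_j/\mathcal{F}_{j-1}$ is nonzero, which is impossible since $\mathcal{F}_1$ is semistable of slope $\mu_1$ and $\mathcal{F}_j/\mathcal{F}_{j-1}$ is semistable of strictly smaller slope $\mu_j$ (any nonzero map would produce a subsheaf of $\mathcal{F}_j/\mathcal{F}_{j-1}$ of slope $\geq\mu_1>\mu_j$). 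Hence $j=1$ and $\mathcal{F}_1$ is $\theta$-invariant; since $\mu(\mathcal{F}_1)=\mu_1>\mu(\mathcal{F})$, this contradicts Higgs semistability.

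For the stability claim, suppose $(\mathcal{F},\theta)$ is Higgs-stable but $\mathcal{F}$ is not stable. By the above, $\mathcal{F}$ must be semistable (else Higgs-semistability already fails), but admits a proper nonzero subsheaf of the same slope $\mu:=\mu(\mathcal{F})$. Let $\mathcal{G}\subset\mathcal{F}$ be the maximal such subsheaf (it exists because the class of slope-$\mu$ subsheaves of a semistable sheaf of slope $\mu$ is closed under sums). I claim $\mathcal{G}$ is $\theta$-invariant: otherwise $\theta$ induces a nonzero map $\mathcal{G}\to \mathcal{F}/\mathcal{G}$; its image is a quotient of the semistable sheaf $\mathcal{G}$ of slope $\mu$ (so of slope $\geq\mu$) and a subsheaf of the semistable $\mathcal{F}/\mathcal{G}$ of slope $\mu$ (so of slope $\leq\mu$), hence of slope exactly $\mu$. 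Its preimage in $\mathcal{F}$ would be a subsheaf of slope $\mu$ strictly containing $\mathcal{G}$, contradicting maximality. Thus $\mathcal{G}$ is a proper nonzero $\theta$-invariant subsheaf with $\mu(\mathcal{G})=\mu(\mathcal{F})$, contradicting Higgs stability. The main (minor) obstacle is just the careful bookkeeping in the stable case, where one must replace the HN-filtration argument by the maximal equal-slope subsheaf; everything else is an immediate consequence of the triviality of $K$ and the Schur-type vanishing for semistable sheaves.
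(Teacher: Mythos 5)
Your overall strategy matches the paper's: use the triviality of $K_X$ on an elliptic curve to regard $\theta$ as an endomorphism, and observe that endomorphisms preserve canonical filtrations, so that any failure of (semi)stability of $\mathcal{F}$ propagates to a failure of Higgs (semi)stability. The paper's proof is a two-line sketch invoking only the Harder--Narasimhan filtration; you spell out the semistable case exactly as the paper intends, and that part is fine.

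The gap is in the stable case. You write ``Let $\mathcal{G}\subset\mathcal{F}$ be the maximal such subsheaf (it exists because the class of slope-$\mu$ subsheaves of a semistable sheaf of slope $\mu$ is closed under sums).'' But closure under sums gives you that the maximal slope-$\mu$ subsheaf is $\mathcal{F}$ itself; it does \emph{not} give a maximal \emph{proper} slope-$\mu$ subsheaf. For instance, if $\mathcal{F}=\mathcal{L}_1\oplus\mathcal{L}_2$ with $\mathcal{L}_1\not\simeq\mathcal{L}_2$ stable of the same slope, the two summands are proper slope-$\mu$ subsheaves whose sum is all of $\mathcal{F}$, and there is no unique maximal proper one. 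So the object $\mathcal{G}$ you want to run the argument on need not exist, and the remainder of your stability argument has nothing to act on. The statement you actually need is correct, but requires a different device: in the abelian, finite-length category of slope-$\mu$ semistable sheaves, take $\mathcal{G}$ to be the socle (the sum of all stable slope-$\mu$ subsheaves); it is canonical, hence $\theta$-invariant. If $\soc(\mathcal{F})\subsetneq\mathcal{F}$ you are done; if $\soc(\mathcal{F})=\mathcal{F}$ then $\mathcal{F}\simeq\bigoplus_i\mathcal{E}_i^{m_i}$ is polystable, and one finds a $\theta$-invariant proper subsheaf directly: if there are at least two distinct $\mathcal{E}_i$, each isotypic block $\mathcal{E}_i^{m_i}$ is $\theta$-invariant (as $\Hom(\mathcal{E}_i,\mathcal{E}_j)=0$ for $i\ne j$); if there is only one block $\mathcal{E}^m$ with $m\ge 2$, then $\theta$ acts by a matrix $A\in M_m(\C)$, an eigenvector of $A$ gives a $\theta$-invariant copy of $\mathcal{E}$. (Equivalently, one can argue via the commutative finite-dimensional subalgebra $\C[\theta]\subset\End(\mathcal{F})$ and its generalized eigenspaces.) Patching this in makes your proof complete.
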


\begin{proof}
 Let $(\mathcal{F},\theta)$ be a Higgs sheaf on $X$. The canonical bundle of an elliptic curve is trivial, so $\theta\in\End{\mathcal{F}}$. Moreover, any endomorphism respects the Harder-Narasimhan of $\mathcal{F}$ so the condition of being a semistable or stable Higgs sheaf is the same as the condition on the underlying coherent sheaf of being semistable or stable.
\end{proof}
The same argument shows that the Harder-Narasimhan stratification of $\mathfrak{Higgs}_{\alpha}(X)$ is induced by the Harder-Narasimhan stratification of $\mathfrak{Coh}_{\alpha}(X)$ when $X$ is an elliptic curve: the former is the pull-back by $\pi_{\alpha}$ of the latter. For a HN-type $\bm{\alpha}$, we let $\mathfrak{Higgs}_{\bm{\alpha}}(X)$ be the locally closed substack of Higgs bundles of HN-type $\bm{\alpha}$. By Lemma \ref{stablehiggs}, $\mathfrak{Higgs}_{\bm{\alpha}}(X)=\pi_{\alpha}^{-1}(\mathfrak{Coh}_{\bm{\alpha}}(X))$. We let $\mathscr{N}_{\bm{\alpha}}$ be the union of the irreducible components of dimension $\dim\mathscr{N}$ of $\mathfrak{Higgs}_{\bm{\alpha}}(X)\cap\mathscr{N}_{\alpha}$. In particular, $\mathscr{N}_{(\alpha)}$ denotes the stack of nilpotent semistable Higgs bundles of class $\alpha$ on an elliptic curve $X$. By Lemma \ref{stablehiggs}, we have
\[
 \mathscr{N}_{(\alpha)}=\pi_{\alpha,\mathscr{N}}^{-1}(\mathfrak{Coh}_{(\alpha)}(X)).
\]
Therefore, by Theorem \ref{mainirr}, the irreducible components of $\mathscr{N}_{(\alpha)}$ can be described as in Corollary \ref{irrcompss} below.

The semistable nilpotent cone is the union
\[
 \mathscr{N}^{ss}=\bigsqcup_{\alpha\in\Z^+}\mathscr{N}_{(\alpha)}
\]
Since $\mathfrak{Coh_{(\alpha)}}(X)$ is open in $\mathfrak{Coh}_{\alpha}(X)$, $\mathscr{N}_{(\alpha)}$ is a Lagrangian substack of $\mathscr{N}_{(\alpha)}=\pi_{\alpha,\mathscr{N}}^{-1}(\mathfrak{Coh}_{(\alpha)}(X))$. For $\xi\in(\N^\mathscr{P})_{\delta}$ ($\delta=\gcd(\alpha)$), we let $\mathscr{N}_{(\alpha),\xi}=\pi_{\alpha,\mathscr{N}}^{-1}(\mathfrak{Coh}_{(\alpha),\xi}(X))$. We let $\overline{\mathscr{N}_{(\alpha),\xi}}$ be the closure of $\mathscr{N}_{(\alpha),\xi}$ in $\mathscr{N}_{(\alpha)}$ and $\Overline[1.5]{\mathscr{N}_{(\alpha),\xi}}$ be the closure of $\mathscr{N}_{(\alpha),\xi}$ in $\mathscr{N}_{\alpha}$. If $\xi=\xi_{\lambda}$ for some partition $\lambda\in\mathscr{P}_{\delta}$, we write $\mathscr{N}_{\alpha,\lambda}=\mathscr{N}_{\alpha,\xi}$. We can now formulate a corollary of Theorem \ref{mainirr}.

\begin{cor}
\label{irrcompss}
 The irreducible components of $\mathscr{N}_{(\alpha)}$ are the closed substacks $\overline{\mathscr{N}_{(\alpha),\lambda}}$ for $\lambda\in(\N^{\mathscr{P}})_{\delta}$, $\delta=\gcd{\alpha}$.
\end{cor}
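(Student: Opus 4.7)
The plan is to derive Corollary \ref{irrcompss} as a direct specialization of Theorem \ref{mainirr} to the single-step Harder-Narasimhan type $\bm{\alpha} = (\alpha)$.

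First, I would reconcile the two parallel definitions of the semistable nilpotent cone. By Lemma \ref{stablehiggs}, on an elliptic curve a Higgs sheaf is semistable exactly when its underlying coherent sheaf is, so $\mathfrak{Higgs}_{(\alpha)}(X) = \pi_{\alpha}^{-1}(\mathfrak{Coh}_{(\alpha)}(X))$ and consequently
\[
\mathscr{N}_{(\alpha)} = \pi_{\alpha,\mathscr{N}}^{-1}(\mathfrak{Coh}_{(\alpha)}(X)).
\]
Since $\mathfrak{Coh}_{(\alpha)}(X)$ is open in $\mathfrak{Coh}_{\alpha}(X)$, this preimage is an open substack of $\mathscr{N}_{\alpha}$. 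Because $\mathscr{N}_{\alpha}$ is Lagrangian in $\mathfrak{Higgs}_{\alpha}(X)$, each of its irreducible components has dimension $\dim \mathscr{N}_{\alpha}$, and therefore every irreducible component of the open subset $\mathscr{N}_{(\alpha)}$ also has this maximal dimension. In particular $\mathscr{N}_{(\alpha)}$ coincides with the substack $\mathscr{N}_{\bm{\alpha}}$ appearing in Theorem \ref{mainirr} when $\bm{\alpha} = (\alpha)$.

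Next I would invoke Theorem \ref{mainirr} with $\bm{\alpha} = (\alpha)$. In this situation the tuple $\bm{\delta}$ reduces to the single integer $\delta = \gcd(\alpha)$ and a tuple of partitions $\bm{\lambda} \in \mathscr{P}_{\bm{\delta}}$ reduces to a single partition $\lambda \in \mathscr{P}_{\delta}$, while the stratum $\mathscr{N}_{\bm{\alpha},\bm{\lambda}}$ becomes $\mathscr{N}_{(\alpha),\lambda}$. The second assertion of Theorem \ref{mainirr} then says that the closures $\overline{\mathscr{N}_{(\alpha),\lambda}}$, taken inside $\mathscr{N}_{(\alpha)} = \mathscr{N}_{\bm{\alpha}}$, are precisely the irreducible components of $\mathscr{N}_{(\alpha)}$, which is the statement of the corollary.

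The only content beyond invoking Theorem \ref{mainirr} is bookkeeping: translating the abstract Harder-Narasimhan notation of Theorem \ref{mainirr} into the semistable-case notation of this section, and verifying that $\mathscr{N}_{(\alpha)}$ has purely top-dimensional components as an open subset of the Lagrangian $\mathscr{N}_{\alpha}$. There is no genuine obstacle here; once Theorem \ref{mainirr} is in hand the corollary is purely a matter of translation.
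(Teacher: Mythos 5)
Your proof is correct and takes essentially the same route as the paper: the paper's own proof of Corollary \ref{irrcompss} is the one-line observation that it follows from Lemma \ref{stablehiggs} together with Theorem \ref{mainirr} applied to $\bm{\alpha}=(\alpha)$, and your write-up just fills in the bookkeeping (identifying $\mathscr{N}_{(\alpha)}=\pi_{\alpha,\mathscr{N}}^{-1}(\mathfrak{Coh}_{(\alpha)}(X))$ with $\mathscr{N}_{\bm{\alpha}}$ for $\bm{\alpha}=(\alpha)$ via the Lagrangian/top-dimensionality argument) that the paper leaves implicit.
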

\begin{proof}
 This corollary is a consequence of Lemma \ref{stablehiggs} and Theorem \ref{mainirr} for $\bm{\alpha}=(\alpha)$ which we prove in Section \ref{proofthm1}
\end{proof}

\subsection{Irreducible components of the global nilpotent cone}
\label{ircompnc}
Let $\alpha\in\Z^+$ and $\bm{\alpha}\in HN(\alpha)$. We let $\mathscr{N}_{\bm{\alpha}}$ be the union of the irreducible components of dimension $\dim\mathscr{N}$ of $\pi_{\alpha,\mathscr{N}}^{-1}(\mathfrak{Coh}_{\bm{\alpha}}(X))$. If $\bm{\alpha}=(\alpha_1,\hdots,\alpha_s)$ and $\bm{\xi}=(\xi_1,\hdots,\xi_s)\in(\N^{\mathscr{P}})_{\bm{\delta}}$, we let $\mathscr{N}_{\bm{\alpha},\bm{\xi}}=\pi_{\alpha,\mathscr{N}}^{-1}(\mathfrak{Coh}_{\bm{\alpha},\bm{\xi}}(X))$. In particular, for $\bm{\alpha}=(\alpha)$ and $\bm{\xi}=\xi\in\mathscr{P}_{\delta}$, we obtain the locally closed substacks $\mathscr{N}_{(\alpha),\xi}$ of $\mathscr{N}_{(\alpha)}$ defined in Section \ref{irrcompnilconess}. Also, $\overline{\mathscr{N}_{\bm{\alpha},\bm{\xi}}}$ is the closure of $\mathscr{N}_{\bm{\alpha},\bm{\xi}}$ in $\mathscr{N}_{\bm{\alpha}}$ and $\Overline[1.5]{\mathscr{N}_{\bm{\alpha},\bm{\xi}}}$ is the closure of $\mathscr{N}_{\bm{\alpha},\bm{\xi}}$ in $\mathscr{N}_{\alpha}$. If $\bm{\xi}=\xi_{\bm{\lambda}}$ for some uplet of partitions $\bm{\lambda}\in\mathscr{P}_{\bm{\delta}}$, we let $\mathscr{N}_{\bm{\alpha},\bm{\lambda}}=\mathscr{N}_{\bm{\alpha},\bm{\xi}}$.

Let $\Lambda\subset \mathscr{N}_{\alpha}$ be an irreducible component. We call \emph{supporting stratum} of $\Lambda$ the unique HN-stratum $S$ of $\mathfrak{Coh}_{\alpha}(X)$ such that $\overline{S\cap \pi_{\alpha}(\Lambda)}=\pi_{\alpha}(\Lambda)$. We reformulate here Theorem \ref{mainirr} whose proof will be given in Section \ref{proofthm1}.
\begin{theorem}
\label{irrcomphn}
 Let $\alpha\in\Z^+$ and $\bm{\alpha}\in HN(\alpha)$. Irreducible components of $\mathscr{N}_{\alpha}$ whose supporting stratum is $\mathfrak{Coh}_{\bm{\alpha}}(X)$ are the irreducible components of $\overline{\mathscr{N}_{\bm{\alpha}}}\subset \mathscr{N}_{\alpha}$. They are the $\Overline[1.5]{\mathscr{N}_{\bm{\alpha},\bm{\lambda}}}$ for $\bm{\lambda}\in\mathscr{P}_{\bm{\delta}}$.
\end{theorem}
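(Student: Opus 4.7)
The plan is to follow the two-step strategy outlined in Section~\ref{mainone}: first handle the semistable case $\bm{\alpha}=(\alpha)$, and then bootstrap to general Harder-Narasimhan types via the morphism $p_{\bm{\alpha}}$.

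In the first step, I would reduce the semistable case to torsion sheaves of degree $\delta = \gcd(\alpha)$ using the isomorphism $\epsilon_\alpha : \mathfrak{Coh}_{(0,\delta)}(X) \to \mathfrak{Coh}_{(\alpha)}(X)$ of Section~\ref{sstable} together with Lemma~\ref{stablehiggs}. Over a closed point $\bigoplus_i T_{x_i,\lambda_i}$ of the Jordan stratum $\mathfrak{Coh}_{(0,\delta),\xi}(X)$, the vanishing of $\Hom$ between skyscrapers at distinct points gives $\End(T) = \bigoplus_i \End(T_{x_i,\lambda_i})$, so that the fiber of $\pi_{\alpha,\mathscr{N}}$ factors as a product of nilpotent centralizers in the $\End(T_{x_i,\lambda_i})$. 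A Springer-type dimension count in the spirit of Ringel's treatment of affine quivers \cite{MR1648647,MR1676227} then shows that $\dim \mathscr{N}_{(\alpha),\xi}$ attains the Lagrangian bound $\dim \mathscr{N}_{(\alpha)}$ if and only if every $\lambda_i$ has length one, i.e.\ when $\xi = \xi_\lambda$ for some $\lambda \in \mathscr{P}_\delta$. Combined with the irreducibility of each $\mathfrak{Coh}_{(\alpha),\xi_\lambda}(X)$, which fibers over an open subset of a symmetric power of $X$, and of the nilpotent cotangent fibers in the regular case, this yields the irreducible components $\overline{\mathscr{N}_{(\alpha),\lambda}}$ of $\mathscr{N}_{(\alpha)}$.

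In the second step, I would use the morphism
\[
 p_{\bm{\alpha}} : \mathfrak{Coh}_{\bm{\alpha}}(X) \to \prod_{i=1}^s \mathfrak{Coh}_{(\alpha_i)}(X),
\]
an iterated vector bundle stack of fiber dimension $d_{\bm{\alpha}}$ recorded in \eqref{dimfiberp}. Since the canonical bundle of $X$ is trivial, a Higgs field on $\mathcal{F} \in \mathfrak{Coh}_{\bm{\alpha}}(X)$ is simply an endomorphism $\theta \in \End(\mathcal{F})$, and the $\Ext^1$-vanishing \eqref{extvan} together with the $\Hom$-vanishing from higher to lower slope gives
\[
 \End(\mathcal{F}) = \bigoplus_{i \geq j} \Hom(\mathcal{F}_i/\mathcal{F}_{i-1},\mathcal{F}_j/\mathcal{F}_{j-1}),
\]
endowing $\theta$ with a block upper-triangular structure whose nilpotence is equivalent to the nilpotence of each diagonal block $\theta_i \in \End(\mathcal{F}_i/\mathcal{F}_{i-1})$. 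Consequently, $\pi_{\alpha,\mathscr{N}}^{-1}(\mathfrak{Coh}_{\bm{\alpha}}(X))$ sits in a commutative square over $p_{\bm{\alpha}}$ with $\prod_i \pi_{(\alpha_i),\mathscr{N}}$, the off-diagonal Higgs data contributing an affine factor of dimension $-d_{\bm{\alpha}}$. A dimension count building on Step~1 then identifies the regular $\bm{\xi} = \bm{\xi}_{\bm{\lambda}}$ as exactly those with $\dim \mathscr{N}_{\bm{\alpha},\bm{\xi}} = \dim \mathscr{N}_{\bm{\alpha}}$, and for such $\bm{\lambda} \in \mathscr{P}_{\bm{\delta}}$ the stratum $\mathscr{N}_{\bm{\alpha},\bm{\lambda}}$ is irreducible.

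I expect the main obstacle to be the Springer-type dimension estimate of Step~1: it must be executed carefully in the stack-theoretic setting, keeping track of the automorphism groups along each Jordan stratum, in order to pinpoint the regular $\xi_\lambda$ as precisely the strata contributing top-dimensional pieces. Once this is in hand, Step~2 follows by a routine gluing argument: irreducibility and dimension propagate through $p_{\bm{\alpha}}$, and distinct $\bm{\lambda} \in \mathscr{P}_{\bm{\delta}}$ yield strata with distinct generic isomorphism type, hence distinct closures, which together exhaust the irreducible components of $\mathscr{N}_{\alpha}$ whose supporting HN-stratum is $\mathfrak{Coh}_{\bm{\alpha}}(X)$.
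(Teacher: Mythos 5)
Your proposal is correct and follows essentially the same approach as the paper: the author also reduces to a dimension count over the refined strata $\mathfrak{Coh}_{\bm{\alpha},\bm{\xi}}(X)$, using the isomorphism $\epsilon_\alpha$ to the torsion stack to compute fiber dimensions of $\pi_{\alpha,\mathscr{N}}$ (Lemmas \ref{lemmanilend}--\ref{lemdimfiber}), combines with the fiber dimension of $p_{\bm{\alpha}}$ so that the off-diagonal $\Hom$ contributions cancel exactly as you describe (Lemma \ref{dimcompirr}), and concludes that $\dim\pi_{\alpha,\mathscr{N}}^{-1}(\mathfrak{Coh}_{\bm{\alpha},\bm{\xi}}(X)) = \sum_i\sum_\lambda \xi_i(\lambda)(1-l(\lambda))\le 0$ with equality precisely when $\bm{\xi}$ is regular. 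The only stylistic difference is that the paper folds your two steps into a single chain of lemmas computing the dimension for general $\bm{\alpha}$ directly, rather than first treating $\bm{\alpha}=(\alpha)$ and then gluing; the content is the same.
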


\subsection{Partial order on the set of irreducible components of the global nilpotent cone}
We define a partial order on the set of irreducible components of the global nilpotent cone as follows. If $\Overline[1.5]{\mathscr{N}_{\bm{\alpha},\bm{\lambda}}}$ and $\Overline[1.5]{\mathscr{N}_{\bm{\beta},\bm{\nu}}}$ are two irreducible components of $\mathscr{N}_{\alpha}$. We say that $\Overline[1.5]{\mathscr{N}_{\bm{\alpha},\bm{\lambda}}}\leq \Overline[1.5]{\mathscr{N}_{\bm{\beta},\bm{\nu}}}$ if $\mathfrak{Coh}_{\bm{\beta}}(X)$ is strictly contained in $\overline{\mathfrak{Coh}_{\bm{\alpha}}(X)}$ or $\bm{\alpha}=\bm{\beta}$ and $\lambda_i\geq \nu_i$ for any $1\leq i\leq s$ when we write $\bm{\lambda}=(\lambda_1,\hdots,\lambda_s)$ and $\bm{\nu}=(\nu_1,\hdots,\nu_s)$. We order similarly the set of irreducible components $\Irr(\mathscr{N}_{\bm{\alpha}})=\{\overline{\mathscr{N}_{\bm{\alpha},\bm{\lambda}}}:\bm{\lambda}\in(\N^{\mathscr{P}})_{\bm{\delta}}\}$ of $\mathscr{N}_{\bm{\alpha}}$ for $\bm{\alpha}\in HN(\alpha)$ (this order reduces to the antidominant order on the uplet of partitions $\bm{\lambda}$). We define a completion of the $\Z$-modules $\Z[\Irr(\mathscr{N}_{\alpha})]$ of functions $\Irr(\mathscr{N}_{\alpha})\rightarrow\Z$ with finite support. Namely, we let $\widehat{\Z[\Irr(\mathscr{N}_{\alpha})]}$ be the $\Z$-module of all functions $\Irr(\mathscr{N}_{\alpha})\rightarrow\Z$. We define in a similar way the completion $\widehat{\Z[\Irr(\mathscr{N}_{\bm{\alpha}})]}$ of $\Z[\Irr(\mathscr{N}_{\bm{\alpha}})]$.

\subsection{Comparison of the two parametrizations of the irreducible components}
In \cite{2017bozec}, Bozec describes a parametrization of the irreducible components of the global nilpotent cone (in arbitrary genus). We briefly recall his parametrization for elliptic curves. Let $\alpha\in\Z^+$. An $\alpha$-partition is a $s$-uplet $\underline{\alpha}=(\alpha_1,\hdots,\alpha_s)$ of elements of $\Z^+$ such that $\sum_{i=1}^si\alpha_i=\alpha$. If $(\mathcal{F},\theta)$ is a nilpotent Higgs sheaf with nilpotency order $s$, its Jordan-type is the $\alpha$-partition $\underline{\alpha}=(\alpha_1,\hdots,\alpha_s)$ such that
\[
 \alpha_k=[\ker(\im\theta^{k-1}/\im\theta^k\xrightarrow{\theta}\im\theta^k/\im\theta^{k+1})].
\]
We let $JT(\alpha)$ be the set of $\alpha$-partitions. For any $\alpha$-partition $\underline{\alpha}$, we let $\Lambda_{\underline{\alpha}}\subset \mathscr{N}_{\alpha}$ be the locally closed substack parametrizing nilpotent Higgs sheaves of Jordan-type $\underline{\alpha}$.
We have then the following theorem.
\begin{theorem}[{\cite[Corollary 2.5]{2017bozec}}]
\label{Bozecpart}
 We have a locally closed partition
 \[
  \mathscr{N}_{\alpha}=\bigsqcup_{\underline{\alpha}\in JT(\alpha)}\Lambda_{\underline{\alpha}}.
 \]
Moreover, the irreducible components of $\mathscr{N}_{\alpha}$ are the closures of the strata of this partition: $\Irr(\mathscr{N}_{\alpha})=\{\overline{\Lambda_{\underline{\alpha}}}:\underline{\alpha}\in JT(\alpha)\}$.
\end{theorem}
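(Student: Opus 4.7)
The plan is to first establish that the Jordan type yields a well-defined locally closed stratification of $\mathscr{N}_{\alpha}$, then to verify that every stratum is irreducible of dimension equal to $\dim \mathscr{N}_{\alpha}$; the two assertions of the theorem are then immediate.

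For the stratification, I would observe that for every nilpotent Higgs sheaf $(\mathcal{F},\theta)$ the decreasing chain
$$\mathcal{F} \supset \im\theta \supset \im\theta^2 \supset \cdots \supset \im\theta^{s-1} \supset \im\theta^s = 0$$
is canonically attached to the pair, as are the induced maps $\bar\theta_k : \im\theta^{k-1}/\im\theta^k \to \im\theta^k/\im\theta^{k+1}$, and the classes $\alpha_k = [\ker \bar\theta_k]$ are obtained from the $[\im\theta^k]$ by linear combinations. In a flat family of Higgs sheaves the ranks and degrees of the $\im\theta^k$ are upper/lower semicontinuous, so the loci on which the $\alpha_k$ take prescribed values are locally closed. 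This yields the disjoint decomposition $\mathscr{N}_\alpha = \bigsqcup_{\underline{\alpha}\in JT(\alpha)} \Lambda_{\underline{\alpha}}$.

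To analyse a single stratum, I would introduce the auxiliary stack $\widetilde{\Lambda}_{\underline{\alpha}}$ parametrising triples $(\mathcal{F},\theta,\mathcal{F}_\bullet)$ where $\mathcal{F}_\bullet$ is a $\theta$-stable flag $0 = \mathcal{F}_0 \subset \mathcal{F}_1 \subset \cdots \subset \mathcal{F}_s = \mathcal{F}$ whose graded pieces carry the classes dictated by $\underline{\alpha}$ and on which $\theta$ realises the surjections of the Jordan-type definition. Because the flag $\mathcal{F}_k = \im\theta^{s-k}$ is canonically determined on $\Lambda_{\underline{\alpha}}$, the forgetful map $\widetilde{\Lambda}_{\underline{\alpha}} \to \Lambda_{\underline{\alpha}}$ is an isomorphism. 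The flagged version can in turn be built as a tower of affine-bundle stacks over a product $\prod_{k} \mathfrak{Coh}_{\alpha_k}(X)$ of irreducible stacks, the fibres parametrising the extension data realising $\mathcal{F}$ from its graded pieces together with the components of $\theta$ that are compatible with the flag. Irreducibility of $\Lambda_{\underline{\alpha}}$ follows at once.

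The main obstacle is the dimension computation: one needs to show $\dim \Lambda_{\underline{\alpha}} = \dim \mathscr{N}_\alpha = -\langle\alpha,\alpha\rangle$ for every $\underline{\alpha}$. The cleanest route uses the Lagrangian property of $\mathscr{N}_\alpha$ inside the cotangent stack $\mathfrak{Higgs}_\alpha(X)$: at each flag step, Serre duality on $X$ identifies the tangent/obstruction contributions of the "extension plus Higgs-field" data with a pair of dual vector spaces whose codimension contributions to the fibre of $\widetilde{\Lambda}_{\underline{\alpha}} \to \prod_k \mathfrak{Coh}_{\alpha_k}(X)$ cancel exactly against the dimension of the base, giving the predicted total. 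Once every $\Lambda_{\underline{\alpha}}$ is proven irreducible of the maximal dimension $\dim \mathscr{N}_\alpha$, the closures $\overline{\Lambda_{\underline{\alpha}}}$ are pairwise distinct irreducible components and, since the open strata cover $\mathscr{N}_\alpha$, they exhaust $\Irr(\mathscr{N}_\alpha)$, proving the theorem.
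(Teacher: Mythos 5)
This theorem is not proved in the paper at all: it is quoted as an external result, namely Corollary~2.5 of Bozec's preprint \cite{2017bozec}, and is only used for comparison with the paper's own parametrization (Theorem~\ref{mainirr}). So there is no in-paper proof to compare against; I can only evaluate your proposal on its own terms.

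Your outline does follow what is essentially Bozec's strategy (stratify by Jordan type, prove each stratum irreducible of Lagrangian dimension, conclude), but as written there are genuine gaps. First, the assertion that $\widetilde{\Lambda}_{\underline{\alpha}}$ is a ``tower of affine-bundle stacks'' over $\prod_k\mathfrak{Coh}_{\alpha_k}(X)$ is not quite right: the conditions that the induced maps $\bar{\theta}_k$ be surjective with kernel of class exactly $\alpha_k$ are \emph{open} conditions, so at best $\widetilde{\Lambda}_{\underline{\alpha}}$ is an open substack of such a tower. Irreducibility then requires that this open locus be nonempty over every point of the base, and more globally one must show $\Lambda_{\underline{\alpha}}\neq\emptyset$ for every $\alpha$-partition $\underline{\alpha}$; neither point is addressed. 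Also note a small imprecision: the graded pieces of the canonical flag $\mathcal{F}_k=\im\theta^{s-k}$ carry the classes $\beta_k=\sum_{j\geq k}\alpha_j$, not the $\alpha_k$ themselves, so the natural base to map to is the product of the $\mathfrak{Coh}_{\alpha_k}(X)$ via the kernels $\ker\bar\theta_k$, and the fibre description needs to be set up with care.

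Second, and more seriously, the dimension computation --- which you yourself flag as the main obstacle --- is asserted rather than proved. Invoking the Lagrangian property of $\mathscr{N}_\alpha$ only gives that $\dim\Lambda_{\underline{\alpha}}\leq\dim\mathscr{N}_\alpha$; it does not by itself show that equality holds for \emph{every} $\underline{\alpha}$, which is precisely the nontrivial content of the statement. (Compare with the paper's own Lemma~\ref{dimcompirr}, where in the stratification by $\bm{\xi}$ most strata are of strictly smaller dimension; equality there is a genuine constraint singling out the regular $\bm\xi$.) Saying that Serre duality produces ``a pair of dual vector spaces whose codimension contributions cancel exactly'' is a plausible heuristic, but the actual bookkeeping --- matching the $\Ext^1$ groups of the extension data against the $\Hom$ groups of the compatible Higgs-field components, across all steps of the flag, and showing the net contribution is $-\langle\alpha,\alpha\rangle$ independently of $\underline{\alpha}$ --- is the whole content of the argument and must be carried out explicitly. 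As it stands, the proposal states the conclusion of that computation without performing it.

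Finally, once every nonempty $\Lambda_{\underline{\alpha}}$ is known to be irreducible of dimension $\dim\mathscr{N}_\alpha$, the last sentence is fine: disjoint locally closed irreducible strata of equal top dimension do have pairwise distinct closures, and since they cover $\mathscr{N}_\alpha$ the closures exhaust $\Irr(\mathscr{N}_\alpha)$. So the endgame is correct, but the middle of the argument needs to be filled in.
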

We have two parametrizations of the irreducible components of the global nilpotent cone given by Theorem \ref{irrcomphn} and Theorem \ref{Bozecpart}. An interesting and rather subtle question is to understand how they correspond to each other. The bijection between the parametrizations can be made quite explicit in small ranks but become much more intricate as the rank increases. In rank $0$, the two parametrizations amount to the parametrization by partitions of the length of a general torsion sheaf of the irreducible component and is similar to the parametrization of nilpotent orbits of $\mathfrak{gl}_d$ for $d\geq 0$. We can restrict ourselves to the case of vector bundles thanks to the following proposition (which holds for any curve).

\begin{proposition}
\label{propHiggsbundles}
 Let $X$ be a smooth projective curve, $\alpha\in\Z^+$ and $\underline{\alpha}=(\alpha_1,\hdots,\alpha_s)$ an $\alpha$-partition. Then, a general Higgs sheaf $(\mathcal{F},\theta)$ of the irreducible component  $\overline{\Lambda_{\underline{\alpha}}}$ is a Higgs bundle if and only if $\alpha_s$ is not a torsion class. Moreover, if $0\leq r\leq s$ is the biggest integer such that $\alpha_r$ is not a torsion class, the torsion part of a general Higgs sheaf of $\overline{\Lambda_{\underline{\alpha}}}$ has Jordan type $(\alpha_{r+1},\hdots,\alpha_s)$.
\end{proposition}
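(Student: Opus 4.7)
The plan is to exploit the canonical splitting $\mathcal{F} = T \oplus V$ into torsion and torsion-free (hence locally free) parts, valid for any coherent sheaf on a smooth projective curve: the short exact sequence $0 \to T \to \mathcal{F} \to V \to 0$ splits because $\Ext^1(V, T) = H^1(X, V^\vee \otimes T) = 0$, the sheaf $V^\vee \otimes T$ being torsion and hence acyclic in positive degrees on a curve. Since $T = \mathcal{F}_{\textup{tors}}$ is $\theta$-invariant and any morphism from $T$ to $V \otimes K$ vanishes (torsion to torsion-free), the Higgs field takes the block-triangular form
\[
\theta = \begin{pmatrix} \theta_T & \rho \\ 0 & \theta_V \end{pmatrix},
\]
with $\theta_T$ and $\theta_V$ nilpotent Higgs fields on $T$ and $V$ respectively and $\rho \colon V \to T \otimes K$. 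The goal is to determine the Jordan type of $(T, \theta_T)$ for a general element of $\Lambda_{\underline{\alpha}}$.

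I would then exhibit an explicit element of $\Lambda_{\underline{\alpha}}$ realising the claimed structure. Build $(T_0, \theta_{T,0})$ as a torsion nilpotent Higgs sheaf of Jordan type $(\alpha_{r+1}, \ldots, \alpha_s)$ using Jordan block summands of the form $T_{x, \lambda}$ from Section \ref{torstack}; this is possible precisely because the $\alpha_{r+1}, \ldots, \alpha_s$ are torsion classes. Next, construct $(V_0, \theta_{V,0})$ a Higgs bundle of nilpotency order $r$, and fix a generic homomorphism $\rho \colon V_0 \to T_0 \otimes K$. Iterating $\theta_0 := \theta_{T,0} + \rho + \theta_{V,0}$, the composite $\theta_{T,0}^{\,s-r-1} \circ \rho \circ \theta_{V,0}^{\,r-1}$ is nonzero for generic $\rho$ and witnesses $\theta_0^{\,s-1} \neq 0$, while all paths of length $\geq s$ vanish by the individual nilpotency orders. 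A direct computation of the filtration $\im \theta_0^k$ together with the induced surjections $Q_k \twoheadrightarrow Q_{k+1} \otimes K$ confirms that $(\mathcal{F}_0, \theta_0) = (T_0 \oplus V_0, \theta_0)$ has Jordan type $\underline{\alpha}$, while its torsion subsheaf is by construction $(T_0, \theta_{T,0})$ of Jordan type $(\alpha_{r+1}, \ldots, \alpha_s)$.

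Finally, to pass from this single example to the generic point of $\Lambda_{\underline{\alpha}}$, I would invoke upper semicontinuity of the Jordan type of the torsion subsheaf in families: the locus in $\Lambda_{\underline{\alpha}}$ where $(T, \theta_T)$ has Jordan type dominating $(\alpha_{r+1}, \ldots, \alpha_s)$ (in the antidominant order on tuples of classes) is open and, by the above construction, nonempty, hence dense by irreducibility of $\Lambda_{\underline{\alpha}}$ (Theorem \ref{Bozecpart}). Combined with the semicontinuity of the total degree of the torsion subsheaf, this forces the generic Jordan type to be exactly $(\alpha_{r+1}, \ldots, \alpha_s)$. The first assertion of the proposition is the special case $r = s$, in which the target Jordan type is empty, equivalent to $T = 0$. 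The hard part will be the combinatorial bookkeeping in the middle step: one must verify that the generic $\rho$ together with $(T_0, \theta_{T,0})$ and $(V_0, \theta_{V,0})$ assembles to the prescribed Jordan type $\underline{\alpha}$ and not a coarser one, i.e.\ that the coupling $\rho$ extends each bundle Jordan block into the torsion part in precisely the right way to produce the $\alpha_k$ for all $k \leq s$.
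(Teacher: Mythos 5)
Your overall strategy differs from the paper's. The paper does not build a witness and then argue by semicontinuity; it takes a \emph{general} point $(\mathcal{T}\oplus\mathcal{F},\theta)$ of $\overline{\Lambda_{\underline{\alpha}}}$ directly (with $\mathcal{T}$ a sum of indecomposable torsion sheaves at distinct points), writes $\theta=\begin{pmatrix} f & g \\ 0 & h\end{pmatrix}$ as you do, sets $r$ to be the nilpotency order of $h$, and then uses Lemma~\ref{lemmasurjective} to argue that for generic $g$ the restriction $g|_{\im h^{t}(-t\Omega_X)}$ is \emph{surjective} onto $\mathcal{T}$ for $1\le t\le r-1$. This immediately yields $\im\theta^{r}(-r\Omega_X)=\mathcal{T}$, hence $\im\theta^{t}(-t\Omega_X)=\im f^{t-r}$ for $t\ge r$, and the torsion Jordan type $(\alpha_{r+1},\dots,\alpha_s)$ drops out. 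Crucially, this computation never needs to know the Jordan type of $(\mathcal{F},h)$: the proposition is only about the torsion part, and the paper's argument isolates exactly the part of the filtration $\im\theta^{t}$ ($t\ge r$) governed by $f$.

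There is a genuine gap in your proposal. You defer the central verification — that a generic $\rho$ coupling $(T_0,\theta_{T,0})$ with $(V_0,\theta_{V,0})$ produces total Jordan type exactly $\underline{\alpha}$ — to a final sentence you call "the hard part," but this is not a bookkeeping afterthought: it \emph{is} the proposition. To even set up your example you would need to choose the Jordan type of $(V_0,\theta_{V,0})$ correctly, and it is not $(\alpha_1,\dots,\alpha_r)$: the coupling $\rho$ lengthens the string filtration and shifts the classes, so the bundle Jordan type that assembles with $(\alpha_{r+1},\dots,\alpha_s)$ into $\underline{\alpha}$ is some other tuple that you would first have to compute. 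That computation is essentially the one the paper does directly, via the surjectivity of $g|_{\im h^{t}}$. Your semicontinuity step also needs more care: "Jordan type dominating in the antidominant order is open" conflates the dominance-order degeneration of nilpotent endomorphisms of a fixed torsion sheaf with the additional degenerations coming from collision of support points, and it is not laid out why the combination of this with semicontinuity of the torsion degree pins down the type uniquely. So while the block decomposition and the idea of exploiting a generic coupling $\rho$ are aligned with the paper, the proof as written does not close, and the route via an explicit example plus semicontinuity is strictly more work than the paper's one-line use of Lemma~\ref{lemmasurjective}.
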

\begin{proof}
 If $\alpha_s$ is of rank $0$ and positive degree, since $\alpha_s=[\im\theta^{s-1}(-(s-1)\Omega_X)]$ where $\Omega_X$ is the dualizing sheaf of $X$ (\cite[\S 2]{2017bozec}), a Higgs sheaf with Jordan type $\underline{\alpha}$ has a torsion subsheaf. Conversely, let $\overline{\Lambda_{\underline{\alpha}}}$ be an irreducible component of the global nilpotent cone such that a general Higgs sheaf of $\overline{\Lambda_{\underline{\alpha}}}$ has a nontrivial torsion part. We have to show that $\alpha_s$ is torsion. By Theorem \ref{irrcomphn}, a general Higgs sheaf of $\overline{\Lambda_{\underline{\alpha}}}$ is of the form $(\mathcal{T}\oplus\mathcal{F},\theta)$ where $\mathcal{T}$ is a direct sum of indecomposable torsion sheaves supported at pairwise distinct points of $X$ and $\theta$ is a general nilpotent Higgs sheaf (this is true for curves of any genus). The Higgs field $\theta$ is of the form
 \[
  \theta=\begin{pmatrix}
          f&g\\
          0&h
         \end{pmatrix}
 \]
where $h\in\Hom(\mathcal{F},\mathcal{F}\otimes\mathcal{O}_X)$, $g\in\Hom(\mathcal{F},\mathcal{T})$ and $f\in\End(\mathcal{T})$. Let $0\leq r\leq s$ be the nilpotency order of $h$. Then, $\im h^{t}(-t\Omega_X)\subset \mathcal{F}$ is a vector bundle for $1\leq t\leq r-1$. By Lemma \ref{lemmasurjective}, if $g$ is general, $g_{|\im h^{t}(-t\Omega_X)}$ is surjective for $1\leq t\leq r-1$. Therefore, for $r\leq t\leq s$, $\im\theta^t(-t\Omega_X)=\im f^{t-r}\subset \mathcal{T}$. This proves the converse and the last assertion of the Proposition.
\end{proof}
\begin{lemma}
\label{lemmasurjective}
 Let $X$ be a smooth projective curve and $\mathcal{T}=\bigoplus_{i=1}^N\mathcal{T}_i$ be a direct sum of indecomposable torsion sheaves supported at pairwise distinct points. Then, for any vector bundle $\mathcal{F}$, a general morphism $\mathcal{F}\rightarrow \mathcal{T}$ is surjective. 
\end{lemma}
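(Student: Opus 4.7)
The plan is to reduce to a pointwise local computation and then combine via the disjointness of the supports of the $\mathcal{T}_i$. First, because $X$ is a smooth curve, each local ring $\mathcal{O}_{X,x_i}$ at the support point $x_i$ of $\mathcal{T}_i$ is a discrete valuation ring, so the classification of finitely generated modules over a DVR identifies the indecomposable torsion module $\mathcal{T}_i$ with $\mathcal{O}_X/\mathfrak{m}_{x_i}^{n_i}$ for some $n_i \geq 1$. Since the $x_i$ are pairwise distinct, support considerations yield a direct sum decomposition $\Hom(\mathcal{F}, \mathcal{T}) = \bigoplus_{i=1}^N \Hom(\mathcal{F}, \mathcal{T}_i)$ of finite-dimensional vector spaces, and surjectivity of a morphism $\mathcal{F} \to \mathcal{T}$ can be tested stalk by stalk, hence component by component: $\phi = (\phi_i)_i$ is surjective if and only if each $\phi_i : \mathcal{F} \to \mathcal{T}_i$ is surjective.

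Next, for each $i$ I would trivialize $\mathcal{F}$ in a neighborhood of $x_i$ to obtain an identification $\Hom(\mathcal{F}, \mathcal{T}_i) \cong (\mathcal{O}_{X,x_i}/\mathfrak{m}_{x_i}^{n_i})^r$, where $r = \rank\mathcal{F}$. In this description, by Nakayama's lemma, $\phi_i$ is surjective precisely when at least one of its $r$ entries is a unit in $\mathcal{O}_{X,x_i}/\mathfrak{m}_{x_i}^{n_i}$. The non-surjective locus $Z_i$ is therefore the closed subvariety $(\mathfrak{m}_{x_i}/\mathfrak{m}_{x_i}^{n_i})^r$, which is a proper closed subvariety of $\Hom(\mathcal{F}, \mathcal{T}_i)$ as soon as $r \geq 1$.

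To conclude, the non-surjective locus in $\Hom(\mathcal{F}, \mathcal{T})$ is the finite union $\bigcup_{i=1}^N Z_i \times \prod_{j \neq i} \Hom(\mathcal{F}, \mathcal{T}_j)$ of proper closed subvarieties, hence itself a proper closed subvariety; its complement is a dense open subset of surjective morphisms. There is no significant technical obstacle beyond carefully setting up the local trivializations; the heart of the argument is the elementary observation that $r \geq 1$ general elements of a local Artinian ring contain a unit. The only degenerate case is $\rank\mathcal{F} = 0$, i.e.\ $\mathcal{F} = 0$, which does not occur in the intended application within the proof of Proposition \ref{propHiggsbundles}, where $\mathcal{F}$ is the locally free part of a Higgs sheaf of positive rank.
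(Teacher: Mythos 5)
Your proof is correct and follows essentially the same idea as the paper's one-line proof, which also invokes Zariski local triviality of vector bundles on a smooth curve to reduce the surjectivity check to a local (stalkwise) statement; you simply spell out the details the paper compresses into ``and then the result is trivial.'' One small point in your favor: the paper's phrasing ``we can assume $\mathcal{F}=\mathcal{O}_X$'' is slightly loose for $\rank\mathcal{F}>1$, whereas your formulation via $\Hom(\mathcal{F},\mathcal{T}_i)\cong(\mathcal{O}_{X,x_i}/\mathfrak{m}_{x_i}^{n_i})^r$ and the unit/non-unit dichotomy handles arbitrary rank cleanly and also flags the (irrelevant) degenerate case $\rank\mathcal{F}=0$.
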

\begin{proof}
 It follows immediately from the fact that a vector bundle over a smooth projective curve is Zariski locally trivial, so we can assume $\mathcal{F}=\mathcal{O}_X$ and then the result is trivial.
\end{proof}

Thanks to Proposition \ref{propHiggsbundles}, we can make the link between the two parametrizations (given by Theorem \ref{irrcomphn} and Theorem \ref{Bozecpart}) in small ranks. It suffices to treat Higgs bundles.

\subsubsection{Rank 1} To illustrate Proposition \ref{propHiggsbundles}, we do not restrict ourselves to Higgs bundles yet. Let $\bm{\alpha}=((0,d_1),(1,d_2))\in HN(1,d_1+d_2)$ be a Harder-Narasimhan type of rank $1$ and $\bm{\lambda}=(\lambda_1,\lambda_2)$ where $\lambda_1$ is a partition of $d_1$ and $\lambda_2=(1)$ the unique partition of $\gcd{(1,d_2)}=1$. Then, we have
\[
 \overline{\Lambda_{\underline{\alpha}}}=\Overline{\mathscr{N}_{\bm{\alpha},\bm{\lambda}}}
\]
where $\underline{\alpha}=((1,d),(0,m_1),\hdots,(0,m_s))$ if $\lambda_1=(1^{m_1},\hdots,s^{m_s})$ and $d=d_1+d_2-\sum_{i=1}^sm_i$.

\subsubsection{Rank 2} Let $\alpha=(2,d)\in\Z^+$ and $\bm{\alpha}=(\alpha_1,\hdots,\alpha_s)\in HN(\alpha)$. Thanks to Proposition \ref{propHiggsbundles}, it suffices to treat the cases of Higgs bundles. The different cases are as follows.
\begin{enumerate}
 \item $s=1$, $d$ odd, $(\bm{\alpha},\bm{\lambda})=((\alpha),(1))$
 \item $s=1$, $d$ even, $(\bm{\alpha},\bm{\lambda})=((\alpha),(2))$,
 \item $s=1$, $d$ even, $(\bm{\alpha},\bm{\lambda})=((\alpha),(1,1))$,
 \item $s=2$, $(\bm{\alpha},\bm{\lambda})=(((1,d_1),(1,d_2)),((1),(1)))$ with $d_1>d_2$, $d_1+d_2=d$.
\end{enumerate}
The associated Jordan type are as follows.
\begin{enumerate}
 \item $(\alpha)$,
 \item $((0,0),(1,\frac{d}{2}))$,
 \item $(\alpha)$,
 \item $((0,d_1-d_2),(1,d_2))$.
\end{enumerate}

\subsubsection{Rank 3}From rank $3$, the situation becomes more subtle. In particular, we need to be able to compute the general type of the image of a morphism from a rank two vector bundle to a line bundle.

Let $\alpha=(3,d)\in\Z^+$ and $\bm{\alpha}\in HN(\alpha)$. The different cases are as follows.
\begin{enumerate}
 \item $s=1$, $\gcd(3,d)=1$, $(\bm{\alpha},\bm{\lambda})=((\alpha),(1))$,
 \item $s=1$, $d=3e$ for some $e\in\Z$, $(\bm{\alpha},\bm{\lambda})=((\alpha),(3))$,
 \item $s=1$, $d=3e$ for some $e\in\Z$, $(\bm{\alpha},\bm{\lambda})=((\alpha),(2,1))$,
 \item $s=1$, $d=3e$ for some $e\in\Z$, $(\bm{\alpha},\bm{\lambda})=((\alpha),(1,1,1))$,
 \item $s=2$, $(\bm{\alpha},\bm{\lambda})=(((1,d_1),(2,d_2)),((1),(1)))$ with $d_2$ odd and $2d_1>d_2$,
 \item $s=2$, $(\bm{\alpha},\bm{\lambda})=(((2,d_1),(1,d_2)),((1),(1)))$ with $d_1$ odd and $d_1>2d_2$,
 \item $s=2$, $(\bm{\alpha},\bm{\lambda})=(((1,d_1),(2,d_2)),((1),(2)))$ with $d_2$ even and $2d_1>d_2$,
 \item $s=2$, $(\bm{\alpha},\bm{\lambda})=(((1,d_1),(2,d_2)),((1),(1,1)))$ with $d_2$ even and $2d_1>d_2$,
 \item $s=2$, $(\bm{\alpha},\bm{\lambda})=(((2,d_1),(1,d_2)),((2),(1)))$ with $d_1$ even and $d_1>2d_2$,
 \item $s=2$, $(\bm{\alpha},\bm{\lambda})=(((2,d_1),(1,d_2)),((1,1),(1)))$ with $d_1$ even and $d_1>2d_2$,
 \item $s=3$, $(\bm{\alpha},\bm{\lambda})=(((1,d_1),(1,d_2),(1,d_3)),((1),(1),(1)))$ with $d_1>d_2>d_3$, $d_1+d_2+d_3=d$
\end{enumerate}

The corresponding Jordan types are as follows.
\begin{enumerate}
 \item $(\alpha)$,
 \item $((0,0),(0,0),(1,e))$,
 \item $((1,e),(1,e))$,
 \item $(\alpha)$,
 \item $((1,d_2-d_1),(1,d_1))$,
 \item $((1,d_1-d_2),(1,d_2))$,
 \item $((0,d_1-\frac{d_2}{2}),(0,0),(1,\frac{d_2}{2}))$,
 \item $((1,d_2-d_1),(1,d_1))$,
 \item $((0,0),(0,\frac{d_1}{2}-d_2),(1,d_2))$,
 \item $((1,d_1-d_2),(1,d_2))$,
 \item $((0,d_1-d_2),(0,d_2-d_3),(1,d_3))$,
\end{enumerate}
\begin{proof}
 The only nontrivial cases are $(7)$ and $(9)$. We only prove $(7)$, the proof of $(9)$ being similar. Let $\mathcal{L}$ and $\mathcal{L}'$ be line bundles of respective degrees $\frac{d_2}{2}$ and $d_1$. Let $\mathcal{F}$ be the nontrivial extension of $\mathcal{L}$ by itself. We therefore have a non-split exact sequence:
 \begin{equation}
 \label{exactseqLF}
  0\rightarrow\mathcal{L}\rightarrow\mathcal{F}\rightarrow\mathcal{L}\rightarrow 0
 \end{equation}

 We have to prove that the Jordan type of a general nilpotent Higgs field for $\mathcal{L}'\oplus\mathcal{F}$ is $((0,d_1-\frac{d_2}{2}),(0,0),(1,\frac{d_2}{2}))$. Such a Higgs field has the form
 \[
  \theta=\begin{pmatrix}
   0&g\\
   0&h
  \end{pmatrix}
 \]
where $g\in\Hom(\mathcal{F},\mathcal{L}')$ and $h\in\End(\mathcal{F})$ is nilpotent. Therefore, $\im h=\mathcal{L}\subset\mathcal{F}$. If $g$ is general, $\theta_{|\mathcal{F}}:\mathcal{F}\rightarrow \mathcal{L}'\oplus\mathcal{F}$ is injective and the restriction $g_{|\mathcal{L}}:\mathcal{L}\rightarrow\mathcal{L}'$ is injective. Indeed, for slope reasons, there exists nonzero morphisms $\mathcal{L}\rightarrow \mathcal{L}'$, which are necessary injective since $\mathcal{L}$ is a line bundle and such a morphism extends to a morphism $\mathcal{F}\rightarrow\mathcal{L}'$ thanks to the exact sequence
\[
 0\rightarrow \Hom(\mathcal{L},\mathcal{L}')\rightarrow \Hom(\mathcal{F},\mathcal{L}')\rightarrow\Hom(\mathcal{L},\mathcal{L}')\rightarrow 0
\]
obtained by applying the left exact functor $\Hom(-,\mathcal{L}')$ to \eqref{exactseqLF} and because $\Ext^1(\mathcal{L},\mathcal{L}')=0$ for slope reasons. Therefore, $[\im \theta]=[\mathcal{F}]=(2,d_2)$, $[\im\theta^2]=(1,\frac{d_2}{2})$ and $\theta^3=0$. This proves that the Jordan type of $\theta$ is $((0,d_1-\frac{d_2}{2}),(0,0),(1,\frac{d_2}{2}))$.
\end{proof}

We observe that it seems not obvious to extract a general pattern or to read the Harder-Narasimhan type of an irreducible component on its Jordan type and vice-versa. Nevertheless, there should be a piecewise linear map which associates a Harder-Narasimhan type to a Jordan-type, that is a map
\[
 (\Z^+)^{(\N_{\geq 1})}\rightarrow (\Z^+)^{(\N_{\geq 1})}
\]
where $(\Z^+)^{(\N)}$ is the set of finitely supported functions $\N_{\geq 1}\rightarrow \Z^+$. This leads to an interesting combinatorial problem which we leave for further investigations. If we see the set of irreducible components of the global nilpotent cone as a basis for the top cohomological Hall algebra of a curve (\cite{2018arXivss}), this question leads to the consideration of analogues of crystals for curves and their parametrizations. For quivers, crystals have been studied for a long time by Kashiwara, Lusztig and now through cluster algebras. The question of reading the Harder-Narasimhan type of an irreducible component of $\mathscr{N}_{\alpha}$ on its Jordan type can be posed for curves of any genus.

\section{Spherical Eisenstein perverse sheaves on the moduli stack of coherent sheaves}
\label{eisensteinsheaves}

\subsection{Induction and restriction functors}
We recall the definitions of the induction and restriction functors appearing in the work of Schiffmann \cite{MR2942792}.
Let $\alpha, \beta\in\Z^+$. We let $\mathfrak{Exact}_{\alpha,\beta}$ be the Artin stack parametrizing exact sequences of coherent sheaves $0\rightarrow \mathcal{G}\rightarrow \mathcal{F}\rightarrow \mathcal{H}\rightarrow 0$ such that $[\mathcal{G}]=\alpha$ and $[\mathcal{H}]=\beta$. We consider the correspondence:
\begin{equation}
\label{inddiagram}
 \xymatrix{
 &\mathfrak{Exact}_{\alpha,\beta}\ar[rd]^{p}\ar[ld]_{q}&\\
 \mathfrak{Coh}_{\beta}(X)\times\mathfrak{Coh}_{\alpha}(X)&&\mathfrak{Coh}_{\alpha+\beta}(X)
 }
\end{equation}
where the morphisms $p$ and $q$ are described on geometric points as follows. If $\mathcal{E}=(0\rightarrow \mathcal{G}\rightarrow \mathcal{F}\rightarrow \mathcal{H}\rightarrow 0)$ is an exact sequence with $[\mathcal{G}]=\alpha$ and $[\mathcal{H}]=\beta$, $p(\mathcal{E})=\mathcal{F}$ and $q(\mathcal{E})=(\mathcal{H},\mathcal{G})$. The morphism $q$ is smooth with connected fibers (and in fact is a vector bundle stack, see \cite{MR3293805,MR2139694}) while $p$ is proper. We let
\[
 \begin{matrix}
  \Ind_{\beta,\alpha}&:&D^b(\mathfrak{Coh}_{\beta}(X))\times D^b(\mathfrak{Coh}_{\alpha}(X))&\rightarrow&D^b(\mathfrak{Coh}_{\alpha+\beta}(X))\\
  && (\mathscr{F},\mathscr{G})&\mapsto&p_!q^*(\mathscr{F}\boxtimes\mathscr{G})[-\langle\beta,\alpha\rangle]
 \end{matrix}
\]
be the induction functor and
\[
 \begin{matrix}
  \Res_{\beta,\alpha}&:&D^b(\mathfrak{Coh}_{\alpha+\beta}(X))&\rightarrow& D^b(\mathfrak{Coh}_{\beta}(X)\times\mathfrak{Coh}_{\alpha}(X))\\
  && \mathscr{F}&\mapsto&q_!p^*\mathscr{F}[-\langle\beta,\alpha\rangle]
 \end{matrix}
\]
be the restriction functor.

\subsection{A category of perverse sheaves on the stack of coherent sheaves}
\label{catsheaf}
In the paper \cite{MR2942792}, Schiffmann considers the semisimple category of perverse sheaves on $\mathfrak{Coh}(X)$ whose simple objects are the simple perverse sheaves on $\mathfrak{Coh}(X)$ which appear with a possible shift as a direct summand of an iterated induction of the constant sheaf on $\mathfrak{Coh}_{\alpha}(X)$ for various $\alpha=(r,d)$ with $r\leq 1$. We let $\mathcal{P}$ be the corresponding category of perverse sheaves on $\mathfrak{Coh}(X)$. These are called \emph{spherical Eisenstein perverse sheaves}. It decomposes as a direct sum $\mathcal{P}=\bigoplus_{\alpha\in\Z^+}\mathcal{P}^{\alpha}$ according to the decomposition of $\mathfrak{Coh}(X)$ into connected components. In his paper, Schiffmann proves that the induction and restriction functors preserve $\mathcal{P}$. Moreover, he gives an explicit description of the simple objects of $\mathcal{P}$ which we recall now. Recall the support map:
\[
 \chi : \mathfrak{Tor}_d(X)\rightarrow S^dX.
\]
It induces an isomorphism
\[
 \mathfrak{Tor}_d^{rss}(X):=\chi^{-1}(S^dX\setminus\Delta)\rightarrow (S^dX\setminus\Delta)/(\G_m)^d
\]
where $(\G_m)^d$ acts trivially on $S^dX\setminus\Delta$. In particular, we have a $\mathfrak{S}_d$-cover $X^d\setminus\Delta\rightarrow S^dX\setminus\Delta$ which gives a family $(\mathscr{L}_{\lambda})_{\lambda\in\mathscr{P}_d}$ of irreducible local systems on $\mathfrak{Tor}_d^{rss}(X)$ indexed by irreducible representations of $\mathfrak{S}_d$, and hence partitions of $d$. This bijection between partitions and irreducible representations of $\mathfrak{S}_d$ is induced by the Springer correspondence, so that the partition $\lambda=(1^d)$ corresponds to the trivial character of $\mathfrak{S}_d$ and $\lambda=(d)$ corresponds to the sign character. Therefore, $\mathscr{L}_{(1^d)}$ is the trivial local system of rank $1$.

Let $\alpha\in\Z^+$ and $\bm{\alpha}=(\alpha_1,\hdots,\alpha_s)\in HN(\alpha)$. We consider the iterated induction diagram:
\begin{equation}
\label{inddiagramit}
 \xymatrix{
 \prod_{i=1}^s\mathfrak{Coh}_{\alpha_{s+1-i}}(X)&\mathfrak{Exact}_{\alpha_1,\hdots,\alpha_s}\ar[r]^(.55){p}\ar[l]_(.4){q}
 &\mathfrak{Coh}_{\alpha}(X)
 }
\end{equation}
where $\mathfrak{Exact}_{\alpha_1,\hdots,\alpha_s}$ parametrizes filtrations $(0=\mathcal{F}_0\subset\hdots\subset\mathcal{F}_s)$ of coherent sheaves such that $[\mathcal{F}_i/\mathcal{F}_{i-1}]=\alpha_i$ for any $1\leq i\leq s$. In the diagram \ref{inddiagramit}, $p^{-1}(\mathfrak{Coh}_{\bm{\alpha}}(X))=q^{-1}\left(\prod_{i=1}^s\mathfrak{Coh}_{(\alpha_{s+1-i})}(X)\right)$. We let $\mathfrak{Exact}_{\bm{\alpha}}$ be this locally closed substack of $\mathfrak{Exact}_{\alpha_1,\hdots,\alpha_s}$. The iterated induction diagram restricted to sheaves of HN-type $\bm{\alpha}$ is:
\begin{equation}
\label{iteratedind}
 \xymatrix{
 &\mathfrak{Exact}_{\bm{\alpha}}\ar[rd]^p\ar[ld]_q&\\
 \prod_{i=1}^s\mathfrak{Coh}_{(\alpha_{s+1-i})}(X)&&\mathfrak{Coh}_{\bm{\alpha}}(X)
 }
\end{equation}
By uniqueness of the Harder-Narasimhan filtration, the map $p$ is an isomorphism. Moreover, $q$ is smooth with connected fibers of dimension $\dim q=d_{\alpha}$ given by Formula \eqref{dimfiberp}. If $\mathscr{F}$ is a perverse sheaf on $\mathfrak{Coh}_{\alpha}(X)$, its supporting stratum is the Harder-Narasimhan stratum $S$ of $\mathfrak{Coh}_{\alpha}(X)$ such that $\supp\mathscr{F}=\overline{\supp\mathscr{F}\cap S}$. We let $\mathfrak{Coh}_{\geq \bm{\alpha}}(X)$ be the union of the Harder-Narasimhan strata $\mathfrak{Coh}_{\bm{\beta}}(X)$ ($\bm{\beta}\in HN(\alpha)$) such that $\mathfrak{Coh}_{\bm{\alpha}}(X)\subset\overline{\mathfrak{Coh}_{\bm{\beta}}(X)}$. It is an open substack of $\mathfrak{Coh}_{\alpha}(X)$. We denote by $j_{\bm{\alpha}}$ the open inclusion $\mathfrak{Coh}_{\geq\bm{\alpha}}(X)\rightarrow\mathfrak{Coh}_{\alpha}(X)$. We let $\mathcal{P}^{\bm{\alpha}}$ denote the restriction to $\mathfrak{Coh}_{\geq \bm{\alpha}}(X)$ of perverse sheaves in $\mathcal{P}^{\alpha}$ whose supporting stratum is $\mathfrak{Coh}_{\bm{\alpha}}(X)$:
\[
 \mathcal{P}^{\bm{\alpha}}=\{j_{\bm{\alpha}}^*\mathscr{F} : \mathscr{F}\in\Ob(\mathcal{P}^{\alpha}), \supp\mathscr{F}=\overline{\supp{\mathscr{F}}\cap\mathfrak{Coh}_{\bm{\alpha}}(X)}\}.
\]

\begin{theorem}[{\cite[Proposition 3.4]{MR2942792}}]
\label{schiffmanneisenstein}
\begin{enumerate}
 \item Let $d\in\N$. The simple objects of the category $\mathcal{P}^{(0,d)}$ are the perverse sheaves on $\mathfrak{Tor}_d(X)$ isomorphic to one of the intersection complexes $\ICC(\mathscr{L}_{\lambda})$ for $\lambda\in\mathscr{P}_d$.
 \item Let $\alpha\in\Z^+$. The simple objects of the category $\mathcal{P}^{\alpha}$ whose supports intersect the semistable locus $\mathfrak{Coh}_{(\alpha)}(X)$ are the perverse sheaves isomorphic to one of the intermediate extensions
 \[
  (j_{\alpha})_{!*}(\epsilon_{\alpha})_*\ICC(\mathscr{L}_{\lambda})
 \]
for $\lambda\in\mathscr{P}_d$, where $j_{\alpha}:\mathfrak{Coh}_{(\alpha)}(X)\rightarrow\mathfrak{Coh}_{\alpha}(X)$ is the inclusion of the semistable locus. For $\alpha\in\Z^+$, we let $\ICC(\mathscr{L}_{\lambda})=(\epsilon_{\alpha})_*\ICC(\mathscr{L}_{\lambda})$. The context makes clear on which space we consider the perverse sheaves.
\item Let $\bm{\alpha}\in HN(\alpha)$. Then, simple perverse sheaves on $\mathfrak{Coh}_{\alpha}(X)$ whose supporting stratum is $\mathfrak{Coh}_{\bm{\alpha}}(X)$ are the perverse sheaves isomorphic to one of the following intermediate extensions
\[
 (j_{\bm{\alpha}})_{!*}p_{\bm{\alpha}}^*(\ICC(\mathscr{L}_{\lambda_1})\boxtimes\hdots\boxtimes\ICC(\mathscr{L}_{\lambda_s}))[d_{\bm{\alpha}}]
\]
for multipartitions $\bm{\lambda}=(\lambda_1,\hdots,\lambda_s)\in\mathscr{P}_{\bm{\delta}}$, where $d_{\bm{\alpha}}$ is the relative dimension of $p_{\bm{\alpha}}$ given by Formula \eqref{dimfiberp}. We let $\mathscr{F}_{\bm{\alpha},\bm{\lambda}}$ denote this perverse sheaf.
 \end{enumerate}
 
\end{theorem}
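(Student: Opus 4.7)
The plan is to prove the three parts in order, each reducing to the previous one. The central tool throughout will be the Beilinson--Bernstein--Deligne decomposition theorem applied to the induction diagrams of \eqref{inddiagramit} and \eqref{iteratedind}, combined with a Springer-type analysis over the regular semisimple locus $\mathfrak{Tor}_d^{rss}(X)$.

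For Part~(1), I would first observe that by definition of $\mathcal{P}^{(0,d)}$ every simple object arises as a direct summand (up to a shift) of some iterated induction $\Ind_{(0,1),\ldots,(0,1)}(\underline{\C}, \ldots, \underline{\C})$ of constant sheaves on $\mathfrak{Coh}_{(0,1)}(X) \cong X \times B\G_m$. Restricting the induction diagram to $\mathfrak{Tor}_d^{rss}(X)$, I would identify the induction with the push-forward of the constant sheaf along the $\mathfrak{S}_d$-Galois cover $X^d \setminus \Delta \to (S^d X \setminus \Delta)/(\G_m)^d$; by $\mathfrak{S}_d$-equivariance this decomposes as $\bigoplus_{\lambda \in \mathscr{P}_d} V_\lambda \otimes \mathscr{L}_\lambda$. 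Applying the decomposition theorem to the globally proper map $p$ and using that the regular semisimple stratum is open dense in $\mathfrak{Tor}_d(X)$, the full induction would split as $\bigoplus_\lambda V_\lambda \otimes \ICC(\mathscr{L}_\lambda)[\bullet]$, so that the simple objects of $\mathcal{P}^{(0,d)}$ are precisely the $\ICC(\mathscr{L}_\lambda)$ for $\lambda \in \mathscr{P}_d$.

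Part~(2) I would obtain by transferring Part~(1) across the Fourier--Mukai equivalence $\epsilon_\alpha : \mathfrak{Coh}_{(0,\delta)}(X) \xrightarrow{\sim} \mathfrak{Coh}_{(\alpha)}(X)$ recalled in Section~\ref{sstable}, after checking that it intertwines the inductions from rank one components up to appropriate shifts, and then taking the intermediate extension $(j_\alpha)_{!*}$ to the whole connected component. For Part~(3), I would use Section~\ref{refhnstrat}: the morphism $p_{\bm{\alpha}}$ is an iteration of vector bundle stacks of total relative dimension $d_{\bm{\alpha}}$, so $p_{\bm{\alpha}}^*[d_{\bm{\alpha}}]$ is fully faithful on perverse sheaves and preserves simplicity. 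Combined with the Künneth formula and Part~(2), the simple perverse sheaves on $\mathfrak{Coh}_{\bm{\alpha}}(X)$ supported on this stratum must be of the prescribed form $p_{\bm{\alpha}}^*(\ICC(\mathscr{L}_{\lambda_1}) \boxtimes \cdots \boxtimes \ICC(\mathscr{L}_{\lambda_s}))[d_{\bm{\alpha}}]$, and one extends by $(j_{\bm{\alpha}})_{!*}$ to obtain the $\mathscr{F}_{\bm{\alpha}, \bm{\lambda}}$.

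The main obstacle is Part~(1): I must show that the iterated induction of constant sheaves genuinely realises every irreducible $\mathfrak{S}_d$-representation (hence every partition $\lambda$), and that no extra ``cuspidal'' simple object lives outside this family. The first point follows from Frobenius reciprocity applied to the cover $X^d \setminus \Delta \to (S^d X \setminus \Delta)/(\G_m)^d$ together with the classical fact that every irreducible $\mathfrak{S}_d$-representation appears in the regular representation. The second rests on the observation that $X$ is one-dimensional and $\mathfrak{Coh}_{(0,1)}(X) \cong X \times B\G_m$ has no interesting local systems beyond those pulled back from $X$, so every simple perverse sheaf in $\mathcal{P}^{(0,d)}$ is determined by its restriction to $\mathfrak{Tor}_d^{rss}(X)$, which is a direct sum of the $\mathscr{L}_\lambda$. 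A secondary technicality is that we work on Artin stacks, so the decomposition theorem and intermediate extensions must be applied via the presentation $\mathfrak{Coh}_\alpha(X) = \varinjlim_n X_n/G_n$ recalled in Section~\ref{notations}.
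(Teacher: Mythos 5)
The paper does not give its own proof: the statement is cited verbatim from Schiffmann's \cite[Proposition 3.4]{MR2942792}. Your reconstruction has the right outline, but Part~(1) --- on which everything else rests --- has a gap. The decomposition theorem applied to the proper map $p$ only gives semisimplicity of $p_!\underline{\C}[\bullet]$; it allows simple summands supported on strata properly contained in $\mathfrak{Tor}_d(X)$, and your stated reason for excluding them (that $\mathfrak{Coh}_{(0,1)}(X)\cong X\times B\G_m$ carries only local systems pulled back from $X$) does not speak to this question. Density of $\mathfrak{Tor}_d^{rss}(X)$ is necessary but not sufficient. What you actually need is that the full-flag map $p\colon\mathfrak{Exact}_{(0,1),\ldots,(0,1)}\to\mathfrak{Tor}_d(X)$ is \emph{small}; this is the global analogue of the smallness of the Grothendieck--Springer map $\tilde{\mathfrak{gl}}_d\to\mathfrak{gl}_d$ (checked, for instance, by fibering over $S^dX$ via the support map $\chi$ and reducing to the classical $\mathfrak{gl}_d$ picture), and it is precisely what forces $p_!\underline{\C}[\bullet]$ to equal $\bigoplus_{\lambda\in\mathscr{P}_d}V_{\lambda}\otimes\ICC(\mathscr{L}_{\lambda})$ with no further constituents. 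You should state and use this, rather than appeal vaguely to density of the open stratum.

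There is a secondary gap in Part~(2). The category $\mathcal{P}^{\alpha}$ is generated by inductions from all components $\mathfrak{Coh}_{(r',d')}(X)$ with $r'\le 1$, while the torsion picture you transport through $\epsilon_{\alpha}$ is built from rank-zero inductions only; it is not automatic that $\epsilon_{\alpha}$ matches these two families of inductions, and ``after checking that it intertwines the inductions'' buries a real step. Schiffmann handles this via the $\SL_2(\Z)$-action on $\mathfrak{Coh}(X)$ recalled in Section~\ref{slaction} (the isomorphisms \eqref{isostra}), which transports induction diagrams compatibly with the HN stratification; this should be invoked explicitly. Part~(3) is sound once (1) and (2) are in place: on $\mathfrak{Coh}_{\bm{\alpha}}(X)$ the map $p$ of \eqref{iteratedind} is an isomorphism and $q$ is a stack vector bundle of relative dimension $d_{\bm{\alpha}}$ given by \eqref{dimfiberp}, so $q^*[d_{\bm{\alpha}}]$ preserves simple perverse sheaves, and exhaustiveness follows from the closure of $\mathcal{P}$ under induction.
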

We order the isomorphism classes of simple spherical Eisenstein perverse sheaves as follows:
\[
 [\mathscr{F}_{\bm{\alpha},\bm{\lambda}}]\leq [\mathscr{F}_{\bm{\beta},\bm{\nu}}]\iff\left\{\begin{aligned}
&\mathfrak{Coh}_{\bm{\beta}}(X)\subset \overline{\mathfrak{Coh}_{\bm{\alpha}}}(X)\text{ is a strict inclusion}\\
&\text{or}\\
&\bm{\beta}=\bm{\alpha} \text{ and for any $1\leq i\leq s$, }\nu_i\leq \lambda_i. \end{aligned}\right.
\]
We define a completion of the Grothendieck group $K_0(\mathcal{P^{\alpha}})$, $\widehat{K_0(\mathcal{P^{\alpha}})}$. By definition, $\widehat{K_0(\mathcal{P^{\alpha}})}$ consists of all formal sums $\sum_{\substack{\bm{\alpha}\in HN(\alpha)\\ \bm{\lambda}\in\mathscr{P}_{\delta}}}a_{\bm{\alpha},\bm{\lambda}}[\mathscr{F}_{\bm{\alpha},\bm{\lambda}}]$ with $a_{\bm{\alpha},\bm{\lambda}}\in\Z$.

\subsection{Singular support of spherical Eisenstein perverse sheaves}
\subsubsection{Nilpotency}
Consider $\alpha\in\Z^+$, $s\geq 0$ and $\bm{\alpha}=(\alpha_1,\hdots,\alpha_s)\in(\Z^+)^s$ such that $\sum_{i=1}^s\alpha_i=\alpha$. We let $\mathfrak{Y}_{\alpha}=\mathfrak{Coh}_{\alpha}(X)$, $\mathfrak{Y}_{\bm{\alpha}}=\prod_{i=1}^s\mathfrak{Coh}_{\alpha_i}(X)$ and $\mathfrak{X}_{\bm{\alpha}}$ be the stack parametrizing filtrations $(0=\mathcal{F}_0\subset \mathcal{F}_1\subset \hdots\subset \mathcal{F}_s=\mathcal{F})$ such that for $1\leq i\leq s$, $[\mathcal{F}_i/\mathcal{F}_{i-1}]=\alpha_i$. The iterated induction diagram is
\[
 \xymatrix{
 \mathfrak{Y}_{\bm{\alpha}}&\mathfrak{X}_{\bm{\alpha}}\ar[r]^p\ar[l]_q&\mathfrak{Y}_{\alpha}
 }
\]

\begin{proposition}
\label{singsuppnil}
 Spherical Eisenstein perverse sheaves have nilpotent singular support.
\end{proposition}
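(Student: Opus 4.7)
The strategy is to prove the proposition in three steps, following the definition of spherical Eisenstein perverse sheaves as simple perverse direct summands (up to shift) of iterated inductions of constant sheaves on $\mathfrak{Coh}_{\alpha}(X)$ for $\alpha=(r,d)$ with $r\leq 1$. I would show: (a) constant (shifted) sheaves on the smooth stacks $\mathfrak{Coh}_{\alpha}(X)$ have nilpotent singular support; (b) the induction functor of the correspondence \eqref{inddiagram} preserves the class of complexes with nilpotent singular support; (c) simple perverse summands inherit the property, since the singular support of a summand is contained in that of the whole complex.

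Step (a) is immediate: since $X$ is a smooth projective curve, $\Coh(X)$ has homological dimension one, so $\mathfrak{Coh}_{\alpha}(X)$ is a smooth Artin stack whose cotangent stack is $\mathfrak{Higgs}_{\alpha}(X)$. The singular support of the shifted constant sheaf equals the zero section $\{(\mathcal{F},0) : \mathcal{F}\in\mathfrak{Coh}_{\alpha}(X)\}$, which lies in $\mathscr{N}_{\alpha}$ since the zero Higgs field is trivially nilpotent.

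Step (b) is the heart of the argument. In the induction diagram \eqref{inddiagram}, $q$ is smooth (in fact a vector bundle stack) and $p$ is proper, so the standard microlocal propagation estimate yields
\[
SS\bigl(\Ind_{\beta,\alpha}(\mathscr{F},\mathscr{G})\bigr)\subseteq p_{\circ}\, q^{\circ}\bigl(SS(\mathscr{F})\times SS(\mathscr{G})\bigr),
\]
where $q^{\circ}$ denotes cotangent pull-back along $q$ and $p_{\circ}$ denotes cotangent push-forward along $p$. Unwinding, a point of $T^{*}\mathfrak{Exact}_{\alpha,\beta}$ consists of a short exact sequence $0\to\mathcal{G}\to\mathcal{F}\to\mathcal{H}\to 0$ together with a Higgs field $\theta:\mathcal{F}\to\mathcal{F}\otimes K$ preserving $\mathcal{G}$; the map $q^{\circ}$ records the pair consisting of $\theta_{\mathcal{G}}:=\theta|_{\mathcal{G}}$ and the Higgs field $\theta_{\mathcal{H}}$ induced on the quotient, while $p_{\circ}$ forgets the filtration and sends the point to $(\mathcal{F},\theta)$. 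The essential algebraic input is the extension lemma: if $\theta_{\mathcal{G}}^{a}=0$ and $\theta_{\mathcal{H}}^{b}=0$, then $\theta^{a+b}=0$. Indeed, $\theta^{b}(\mathcal{F})\subseteq\mathcal{G}\otimes K^{\otimes b}$ since its image in $\mathcal{H}\otimes K^{\otimes b}$ is $\theta_{\mathcal{H}}^{b}(\mathcal{H})=0$, and $a$ further iterations vanish by nilpotency of $\theta_{\mathcal{G}}$. Thus $p_{\circ}\, q^{\circ}(\mathscr{N}_{\beta}\times\mathscr{N}_{\alpha})\subseteq\mathscr{N}_{\alpha+\beta}$.

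Combining (a) and (b), an easy induction on the number of factors shows that every iterated induction of constant sheaves on $\mathfrak{Coh}_{\alpha_{i}}(X)$ has nilpotent singular support, whence (c) concludes. The principal obstacle is to make the microlocal estimate rigorous on Artin stacks rather than on schemes: I would work via the smooth quotient-stack presentations $\mathfrak{Coh}_{\alpha}(X)=\varinjlim_{n}X_{n}/G_{n}$ from the notations section and apply Kashiwara--Schapira propagation of singular support $G_{n}$-equivariantly, verifying compatibility of the cotangent correspondence with these presentations. The genuine geometric content, however, is concentrated in the elementary extension lemma for nilpotent Higgs fields above.
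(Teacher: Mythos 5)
Your proof is correct, and it reaches the same conclusion by a closely related but genuinely different route. The paper works with the \emph{iterated} induction correspondence
$\mathfrak{Y}_{\bm{\alpha}}\xleftarrow{q}\mathfrak{X}_{\bm{\alpha}}\xrightarrow{p}\mathfrak{Y}_{\alpha}$ all at once: it computes $(dp^*)^{-1}$ of the zero section and observes that the resulting Higgs fields satisfy $\theta(\mathcal{F}_i)\subset\mathcal{F}_{i-1}\otimes K$, i.e.\ they \emph{strictly lower} the filtration, so nilpotency is immediate (with $\theta^s=0$). You instead prove the stronger statement that the \emph{pairwise} induction functor preserves the class of complexes with nilpotent singular support, and then iterate. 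The price is that your intermediate Higgs fields only \emph{preserve} the filtration (since the inputs have possibly nonzero nilpotent singular support rather than the zero section), and you must supply the extension lemma: $\theta_{\mathcal{G}}^a=0$, $\theta_{\mathcal{H}}^b=0$ imply $\theta^{a+b}=0$. This elementary lemma is exactly what replaces the paper's ``strict filtration shift'' observation. What your route buys is a more reusable statement (nilpotent singular support is closed under $\Ind$, for arbitrary inputs); what the paper's route buys is brevity and avoidance of the extension lemma, since the zero section makes the off-diagonal data trivially vanish. Your step~(c) (passing to simple summands) and the caveat about making the microlocal estimate rigorous via the $\varinjlim_n X_n/G_n$ presentations are both present implicitly in the paper as well, so those are not additional gaps; the paper relies on properness of $p$ in the same place you do.
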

\begin{proof}
 It suffices to show that $p_!q^*\underline{\C}_{\mathfrak{Y}_{\bm{\alpha}}}$ has a nilpotent singular support for any $\bm{\alpha}\in (\Z^+)^s$. Since $q^*\underline{\C}_{\mathfrak{Y}_{\bm{\alpha}}}$ is the constant local system on $\mathfrak{X}_{\bm{\alpha}}$, it suffices to understand how the zero section of $T^*\mathfrak{X}_{\bm{\alpha}}$ transforms under the cotangent correspondence:
 \[
  \xymatrix{
  &T^*\mathfrak{Y}_{\alpha}\times_{\mathfrak{Y}_{\alpha}}\mathfrak{X}_{\bm{\alpha}}\ar[ld]_{dp^*}\ar[rd]^{pr_1}&\\
  T^*\mathfrak{X}_{\bm{\alpha}}&&T^*\mathfrak{Y}_{\alpha}
  }
 \]
We can describe $T^*\mathfrak{Y}_{\alpha}$ as the stack of Higgs bundles, which parametrizes pairs $(\mathcal{F},\theta)$, where $\mathcal{F}$ is a coherent sheaf on $X$ verifying $[\mathcal{F}]=\alpha$ and $\theta\in\Hom_{\OO_X}(\mathcal{F},\mathcal{F}\otimes K_X)$ (see Section \ref{higgssheaves}); analogously, $T^*\mathcal{Y}_{\bm{\alpha}}=\prod_{i=1}^sT^*\mathfrak{Y}_{\alpha_i}$ and $T^*\mathfrak{Y}_{\alpha}\times_{\mathfrak{Y}_{\alpha}}\mathfrak{X}_{\bm{\alpha}}$ is the stack parametrizing pairs $(\mathcal{F}_{\bullet},\theta)$ where $\mathcal{F}_{\bullet}=(0=\mathcal{F}_0\subset\hdots\subset \mathcal{F}_s=\mathcal{F})$ and $\theta\in\Hom_{\OO_X}(\mathcal{F},\mathcal{F}\otimes K_X)$. The map $pr_1$ sends $(\mathcal{F}_{\bullet},\theta)$ to $(\mathcal{F},\theta)$. If we write $\mathfrak{X}_{\bm{\alpha}}$ for the zero-section of $T^*\mathfrak{X}_{\bm{\alpha}}$, the geometric points of $(dp^*)^{-1}(\mathfrak{X}_{\bm{\alpha}})$ parametrize pairs $(\mathcal{F}_{\bullet},\theta)$ such that $\theta(\mathcal{F}_i)\subset \mathcal{F}_{i-1}\otimes K_X$ for any $1\leq i\leq s$. Consequently, $pr_1(dp^*)^{-1}(\mathfrak{X}_{\bm{\alpha}})\subset \mathscr{N}_{\alpha}$ and since the properness of $p$ implies $SS(p_!q^*\underline{\C}_{\mathfrak{Y}_{\bm{\alpha}}})\subset pr_1(dp^*)^{-1}(\mathfrak{X}_{\bm{\alpha}})$, this proves the nilpotency of the singular support.
\end{proof}

\begin{remark}
 The same proof shows that for any local system $\mathscr{L}$ on $\mathfrak{X}_{\bm{\alpha}}$, the singular support of $p_!\mathscr{L}$ is nilpotent.
\end{remark}

\begin{remark}
 The situation of Proposition \ref{singsuppnil} is analogous to the situation of the induction of perverse sheaves for reductive Lie algebras. Let $P\subset G$ be a parabolic subgroup of a reductive algebraic group with L\'evi quotient $L$ and $\mathfrak{p}\subset \mathfrak{g}$ and $\mathfrak{l}$ be the corresponding Lie algebras. In this situation, the induction diagram reads
 \[
  \xymatrix{\mathfrak{l}/L&\ar[l]_q\mathfrak{p}/P\ar[r]^p&\mathfrak{g}/G}
 \]
(see for example \cite[\S 1.1]{MR3874694}).

\end{remark}

\subsubsection{The singular support over the supporting HN-stratum}
\label{singsuppsup}
In this section, we compute the singular support of the restriction of spherical Eisenstein sheaves to their supporting stratum. Let $\mathscr{F}$ be a simple spherical Eisenstein sheaf on $\mathfrak{Coh}_{\alpha}(X)$ supported on the stratum $\mathfrak{Coh}_{\bm{\alpha}}(X)$ (that is, $\supp\mathscr{F}=\overline{\supp\mathscr{F}\cap\mathfrak{Coh}_{\bm{\alpha}}(X)}$). Recall the open immersion $j_{\bm{\alpha}} : \mathfrak{Coh}_{\geq\bm{\alpha}}(X)\rightarrow \mathfrak{Coh}_{\alpha}(X)$ from Section \ref{catsheaf}. The characteristic cycle of $(j_{\bm{\alpha}})^*\mathscr{F}$ is a $\Z$-linear combination with nonnegative coefficients of the irreducible components of $\overline{\mathscr{N}_{\bm{\alpha}}}$. By Theorem \ref{mainirr}, we can write
\begin{equation}
\label{deccc}
 CC((j_{\bm{\alpha}})^*\mathscr{F})=\sum_{\bm{\lambda}\in\mathscr{P}_{\bm{\delta}}}m_{\mathscr{F},\bm{\lambda}}[\overline{\mathscr{N}_{\bm{\alpha},\bm{\lambda}}}].
\end{equation}
See Section \ref{ircompnc} for details on the notation $\mathscr{N}_{\bm{\alpha},\bm{\lambda}}$. Write $\mathscr{F}=(j_{\bm{\alpha}})_{!*}p^*_{\bm{\alpha}}(\ICC(\mathscr{L}_{\lambda_1})\boxtimes\hdots\boxtimes \ICC(\mathscr{L}_{\lambda_s}))[d_{\bm{\alpha}}]$ (using Theorem \ref{schiffmanneisenstein}). Then, the restriction of $\mathscr{F}$ to $\mathfrak{Coh}_{\bm{\alpha}}(X)$ is $p_{\bm{\alpha}}^*(\ICC(\mathscr{L}_{\lambda_1})\boxtimes\hdots\boxtimes \ICC(\mathscr{L}_{\lambda_s}))[d_{\bm{\alpha}}]$. We let $\mathscr{F}_i=\ICC(\mathscr{L}_{\lambda_i})$ for $1\leq i\leq s$. This is a perverse sheaf on $\mathfrak{Coh}_{\alpha_i}(X)$. With these notations, we have in particular
\[
 CC((j_{(\alpha_i)})^*\mathscr{F}_i)=\sum_{\lambda\in\mathscr{P}_{\delta_i}}m_{\mathscr{F}_i,\lambda}[\overline{\mathscr{N}_{(\alpha_i),\lambda}}].
\]
This is Formula \eqref{deccc} applied to $\alpha=\alpha_i$ and $\bm{\alpha}=(\alpha_i)$, using that $d_{(\alpha_i)}=0$.

\begin{lemma}
\label{multss}
 The multiplicies $m_{\mathscr{F},\bm{\lambda}}$ are given by the formula
 \[
  m_{\mathscr{F},\bm{\lambda}}=\prod_{i=1}^sm_{\mathscr{F}_i,\lambda_i}
 \]
for any $\bm{\lambda}=(\lambda_1,\hdots,\lambda_s)\in\mathscr{P}_{\bm{\delta}}$.
\end{lemma}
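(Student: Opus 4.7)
The whole argument is a computation on the open dense substack $\mathfrak{Coh}_{\bm{\alpha}}(X) \subset \mathfrak{Coh}_{\geq \bm{\alpha}}(X)$, where by Theorem \ref{schiffmanneisenstein} we have
\[
\mathscr{F}|_{\mathfrak{Coh}_{\bm{\alpha}}(X)} = p_{\bm{\alpha}}^{*}\!\bigl(\ICC(\mathscr{L}_{\lambda_1})\boxtimes\cdots\boxtimes\ICC(\mathscr{L}_{\lambda_s})\bigr)[d_{\bm{\alpha}}].
\]
Since each irreducible component $\overline{\mathscr{N}_{\bm{\alpha},\bm{\lambda}}}$ meets $\pi_{\alpha}^{-1}(\mathfrak{Coh}_{\bm{\alpha}}(X))$ in a dense open subset, the multiplicities $m_{\mathscr{F},\bm{\lambda}}$ can be read off from the characteristic cycle of this restriction.

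The plan is to combine two classical compatibilities. First, the external product formula: the characteristic cycle of $\ICC(\mathscr{L}_{\lambda_1})\boxtimes\cdots\boxtimes\ICC(\mathscr{L}_{\lambda_s})$ on $\prod_{i=1}^{s}\mathfrak{Coh}_{(\alpha_i)}(X)$ equals the external product of the characteristic cycles,
\[
CC\Bigl(\boxtimes_{i=1}^{s}\ICC(\mathscr{L}_{\lambda_i})\Bigr)=\sum_{\bm{\mu}\in\mathscr{P}_{\bm{\delta}}}\Bigl(\prod_{i=1}^{s}m_{\mathscr{F}_i,\mu_i}\Bigr)\bigl[\overline{\mathscr{N}_{(\alpha_1),\mu_1}}\times\cdots\times\overline{\mathscr{N}_{(\alpha_s),\mu_s}}\bigr].
\]
Second, the smooth pullback formula: since $p_{\bm{\alpha}}$ is a smooth morphism (an iterated vector bundle stack, cf.\ Section \ref{refhnstrat}) of relative dimension $d_{\bm{\alpha}}$, one has $CC(p_{\bm{\alpha}}^{*}\mathscr{G}[d_{\bm{\alpha}}]) = p_{\bm{\alpha}}^{\#}\,CC(\mathscr{G})$, where $p_{\bm{\alpha}}^{\#}$ denotes the cycle-level pullback via the cotangent correspondence
\[
T^{*}\mathfrak{Coh}_{\bm{\alpha}}(X)\;\hookleftarrow\; p_{\bm{\alpha}}^{*}T^{*}\!\prod_{i}\mathfrak{Coh}_{(\alpha_i)}(X)\;\longrightarrow\; T^{*}\!\prod_{i}\mathfrak{Coh}_{(\alpha_i)}(X).
\]

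The remaining step is the geometric identification
\[
p_{\bm{\alpha}}^{\#}\bigl[\overline{\mathscr{N}_{(\alpha_1),\mu_1}}\times\cdots\times\overline{\mathscr{N}_{(\alpha_s),\mu_s}}\bigr] \;=\; \bigl[\overline{\mathscr{N}_{\bm{\alpha},\bm{\mu}}}\cap\pi_{\bm{\alpha}}^{-1}(\mathfrak{Coh}_{\bm{\alpha}}(X))\bigr].
\]
This is where the specific structure of elliptic HN-strata enters. Because the HN-filtration splits (via \eqref{extvan}), a closed point of $T^{*}\mathfrak{Coh}_{\bm{\alpha}}(X)$ over $\mathcal{F}\simeq\bigoplus_i\mathcal{G}_i$ is an endomorphism $\theta:\mathcal{F}\to\mathcal{F}$, which automatically has block upper-triangular form with respect to the slope filtration since $\Hom(\mathcal{G}_j,\mathcal{G}_i)=0$ for $i<j$. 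The diagonal blocks $\theta_i\in\End(\mathcal{G}_i)$ form precisely the image of the inclusion $p_{\bm{\alpha}}^{*}T^{*}\!\prod \hookrightarrow T^{*}\mathfrak{Coh}_{\bm{\alpha}}(X)$, while the strictly upper-triangular blocks parametrize the cotangent directions along the fibers of $p_{\bm{\alpha}}$. Since a block upper-triangular $\theta$ is nilpotent if and only if each diagonal block $\theta_i$ is nilpotent, the pulled-back cycle coincides set-theoretically with $\pi_{\bm{\alpha}}^{-1}(\mathfrak{Coh}_{\bm{\alpha},\bm{\mu}}(X))\cap\mathscr{N}_{\bm{\alpha}}$. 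The equality holds with multiplicity one because the $p_{\bm{\alpha}}^{\#}$ of an irreducible Lagrangian is irreducible of the same codimension and by Theorem \ref{mainirr} (applied to the HN-stratum) the right-hand side is the irreducible component $\overline{\mathscr{N}_{\bm{\alpha},\bm{\mu}}}$ of $\mathscr{N}_{\bm{\alpha}}$.

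Putting these three steps together and collecting terms yields $m_{\mathscr{F},\bm{\lambda}}=\prod_{i=1}^{s}m_{\mathscr{F}_i,\lambda_i}$. The only delicate point is the geometric identification in the third step, whose verification relies critically on the splitting of the HN-filtration on an elliptic curve; over a curve of higher genus the same block decomposition would fail, which is the main reason this clean multiplicativity is specific to elliptic curves.
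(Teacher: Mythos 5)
Your proof is correct and follows essentially the same route as the paper: the exterior-product compatibility of $CC$ combined with its compatibility with pullback along the smooth morphism $p_{\bm{\alpha}}$ (the paper cites Kashiwara--Schapira, Propositions 9.4.3 and 9.4.5 together with the remark after Definition 5.4.12). The block-triangular analysis you add, explaining why $p_{\bm{\alpha}}^{\#}$ takes each external product of conormal cycles to the corresponding component $\overline{\mathscr{N}_{\bm{\alpha},\bm{\mu}}}$ with multiplicity one, is precisely the geometric content that the paper compresses into the phrase ``compatibility with the stratifications defined in Section~\ref{refhnstrat}.''
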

\begin{proof}
 This formula follows from the fact that the characteristic cycle of an exterior product is the product of the characteristic cycles, from the smoothness of $q$ and its compatibility with the stratifications defined in Section \ref{refhnstrat} (see \cite{MR1074006}). More precisely, Proposition $9.4.5$ of \emph{op. cit.} asserts that a shift by $k$ transforms the characteristic cycle by $(-1)^k$, Proposition $9.4.3$ that the characteristic cycle map is compatible with pull-back by non-characteristic morphisms and the remark following Definition $5.4.12$ that a smooth morphism in non-characteristic.
\end{proof}

Before giving the microlocal multiplicities of spherical Eisenstein sheaves whose supporting stratum is the semistable one, that is $\bm{\alpha}=(\alpha)$, we need a digression on perverse sheaves on general linear Lie algebras $\mathfrak{gl}_d$ for $d\geq 0$. Let $d\in\N$. Nilpotent orbits of $\mathfrak{gl}_d$ are in bijection with partitions of $d$ in such a way that for any two partitions $\lambda,\nu$, $\OO_{\nu}\subset \overline{\OO_{\lambda}}$ if and only if $\nu\leq \lambda$ for the dominance order $\leq$ on partitions. In particular, the orbit $\{0\}\subset\mathfrak{gl}_d$ corresponds to the partition $(1^d)$ and the regular nilpotent orbit to $\lambda=(d)$.

Let $\Lambda_{\nu}=[\overline{T^*_{\OO_{\nu}}\mathfrak{gl}_d}]$ as a Lagrangian cycle of $T^*\mathfrak{gl}_d$. Then, we have the following easy lemma.

\begin{lemma}
\label{ccisonil}
 The characteristic cycle map
 \[
  CC : K_0(\Perv_{\GL_d}(\mathscr{N}))\rightarrow \Z[\Lambda_{\nu} : \nu\in\mathscr{P}_d]
 \]
 is an isomorphism of $\Z$-modules. Moreover, it is lower unitriangular with respect to the basis of simple perverse sheaves on the left and the basis given by the $\Lambda_{\nu}$ on the right, both ordered using the antidominance order on partitions.

\end{lemma}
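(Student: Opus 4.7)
My plan is to identify both sides of $CC$ with free $\Z$-modules indexed by $\mathscr{P}_d$, and then read off unitriangularity directly from the orbit-closure order on the nilpotent cone of $\mathfrak{gl}_d$.

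I first enumerate the simple objects of $\Perv_{\GL_d}(\mathscr{N})$. The centralizer in $\GL_d$ of any nilpotent matrix is connected, so no nilpotent orbit $\OO_\nu$ carries a nontrivial $\GL_d$-equivariant irreducible local system. Consequently the simples are precisely the intersection cohomology complexes $IC_\nu := \ICC(\OO_\nu,\underline{\C})$ for $\nu\in\mathscr{P}_d$, and $K_0(\Perv_{\GL_d}(\mathscr{N}))$ is $\Z$-free on $\{[IC_\nu]\}_{\nu\in\mathscr{P}_d}$; the target $\Z[\Lambda_\nu:\nu\in\mathscr{P}_d]$ is $\Z$-free on $\{\Lambda_\nu\}$ by construction. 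It therefore suffices to show that the matrix of $CC$ in these two bases is lower unitriangular in the antidominance order.

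Next I analyze $CC([IC_\nu])$. Since $\supp IC_\nu=\overline{\OO_\nu}$, the singular support of $IC_\nu$ is contained in the union of the conormals $\overline{T^*_{\OO_\mu}\mathfrak{gl}_d}$ over $\GL_d$-orbits $\OO_\mu\subseteq \overline{\OO_\nu}$. Under the bijection between nilpotent orbits and partitions of $d$, the closure order $\OO_\mu\subseteq\overline{\OO_\nu}$ translates into $\mu\leq\nu$ in the dominance order, equivalently $\mu\geq\nu$ in the antidominance order used in the statement. Hence
\[
 CC([IC_\nu]) = \sum_{\mu} c_{\nu,\mu}\,[\Lambda_\mu], \qquad c_{\nu,\mu}\in\Z_{\geq 0},
\]
with $c_{\nu,\mu}=0$ unless $\mu$ lies on or below $\nu$ in the antidominance order. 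The leading coefficient $c_{\nu,\nu}$ is read off on the open stratum $\OO_\nu$ of the support, where $IC_\nu$ is by definition the shifted constant sheaf; its characteristic cycle along $\OO_\nu$ is $[\overline{T^*_{\OO_\nu}\mathfrak{gl}_d}]=[\Lambda_\nu]$ with multiplicity $1$.

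This shows that the matrix of $CC$ is lower unitriangular with unit diagonal in the antidominance-ordered bases, hence invertible over $\Z$, which yields both the isomorphism and the triangularity assertion. The only place that requires care is the identity $c_{\nu,\nu}=+1$ on the nose (rather than $\pm 1$); this is a standard local computation matching the sign and shift conventions in the definitions of $\ICC$ and of $CC$, and everything else is purely formal from $\supp IC_\nu=\overline{\OO_\nu}$ and the orbit-closure poset on $\mathscr{N}$.
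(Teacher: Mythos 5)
Your proof is correct. The paper declares this lemma ``easy'' and gives no proof; your argument (centralizers of nilpotents in $\GL_d$ are connected, so the simples are exactly the $\ICC(\overline{\OO_\nu})$; singular support is constrained to conormals of orbits in $\overline{\OO_\nu}$; closure order on orbits is dominance order on partitions; multiplicity along the open stratum is $1$) is precisely the standard one that the paper implicitly relies on.
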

Consider the Grothendieck-Springer resolution for $\mathfrak{g}=\mathfrak{gl}_d$:
\[
 \pi_{\mathfrak{g}} : \tilde{\mathfrak{g}}\rightarrow \mathfrak{g}.
\]
The decomposition theorem gives the decomposition $(\pi_{\mathfrak{g}})_*\underline{\C}=\bigoplus_{\lambda\in\mathscr{P}_d}\ICC(\mathscr{L}_{\lambda})\otimes V_{\lambda}$ where $\ICC(\mathscr{L}_{\lambda})=\mathfrak{F}\ICC(\OO_{\lambda})$ and $V_{\lambda}$ is a (non-zero) multiplicity complex. Here, $\mathfrak{F}$ is the Fourier-Sato transform of perverse sheaves on $\mathfrak{g}$ and $\mathscr{L}_{\lambda}$ is the local system on $\mathfrak{g}$ associated to the partition $\lambda$ (the map $\pi_{\mathfrak{g}}$ is small and is a $\mathfrak{S}_d$-cover over the regular semisimple locus of $\mathfrak{g}$). In particular, $\mathscr{L}_{(1^d)}=\underline{\C}_{\mathfrak{g}}$ is the trivial local system while $\mathscr{L}_{(d)}$ is associated to the sign character of $\mathfrak{S}_d$. Precomposing the isomorphism of Lemma \ref{ccisonil} with the Fourier-Sato transform gives a $\Z$-module isomorphism
\begin{equation}
\label{ccnilconeg}
 CC : K_0(\mathcal{P}_{\mathfrak{g}})\rightarrow \Z[\Lambda_{\nu} : \nu\in\mathscr{P}_d],
\end{equation}
where $\mathcal{P}_{\mathfrak{g}}$ is the semisimple category of perverse sheaves on $\mathfrak{g}$ generated by the $\ICC(\mathscr{L}_{\lambda})$, $\lambda\in\mathscr{P}_d$. Moreover, by ordering the basis given by classes of simple perverse sheaves on the left and the basis $(\Lambda_{\nu})_{\nu\in\mathscr{P}_d}$ on the right by the antidominance order, this isomorphism is lower unitriangular. Note that identifying $T^*\mathfrak{g}$, $T^*\mathfrak{g}^*$ and $\mathfrak{g}\times \mathfrak{g}$ in the natural way using the trace pairing, the Fourier transform and the characteristic cycle map are compatible with these isomorphisms, that is the microlocal multiplicities are preserved (see \cite[Exercise IX.7]{MR1074006}). It is possible to give explicitly the microlocal multiplicities thanks to the following result of Evens and Mirkovi\'c.
\begin{theorem}[{\cite[Theorem 0.2 (b)]{MR1682280}}]
 Let $d\in\N$. Let $\lambda$ and $\nu$ be two partitions of $d$ corresponding to nilpotent orbits $\OO_{\lambda}$ and $\OO_{\nu}$. The multiplicity $\alpha_{\nu,\lambda}$ of $[\overline{T^*_{\OO_{\nu}}\mathfrak{gl}_d}]$ in $CC(\ICC(\OO_{\lambda}))$ is given by the multiplicity of the Springer representation $V_{\lambda}$ of $\mathfrak{S}_d$ in the cohomology of the Springer fiber $H^*(\mathcal{B}_e)$ at $e\in\OO_{\nu}$, which is the Kostka number $K_{\lambda\nu}$.
\end{theorem}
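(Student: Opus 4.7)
The plan is to reduce the statement to the structure of $\pi_*\underline{\C}_{\tilde{\mathcal{N}}}$ for the Springer resolution, combined with a refined $\mathfrak{S}_d$-equivariant computation of its characteristic cycle; the identification of the resulting multiplicity with the Kostka number $K_{\lambda\nu}$ is then the classical Hotta--Springer--Kraft theorem and can be invoked as a black box.

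First I would consider the Springer resolution $\pi\colon \tilde{\mathcal{N}}\to\mathcal{N}$ of the nilpotent cone of $\mathfrak{gl}_d$ and apply the decomposition theorem in the form
\[
\pi_*\underline{\C}_{\tilde{\mathcal{N}}}[\dim\tilde{\mathcal{N}}]\simeq \bigoplus_{\lambda\in\mathscr{P}_d}\ICC(\OO_\lambda)\otimes V_\lambda,
\]
with $V_\lambda$ the Springer representation attached to $\lambda$. Applying $CC$ on both sides and writing $CC(\ICC(\OO_\lambda))=\sum_\nu\alpha_{\nu,\lambda}[\overline{T^*_{\OO_\nu}\mathfrak{g}}]$ yields
\[
CC\bigl(\pi_*\underline{\C}[\dim]\bigr)=\sum_{\nu,\lambda}(\dim V_\lambda)\,\alpha_{\nu,\lambda}\,[\overline{T^*_{\OO_\nu}\mathfrak{g}}].
\]
On the other hand, the left-hand side is directly computable: by Kashiwara's proper pushforward formula for characteristic cycles applied to the constant sheaf, its support is the Steinberg variety, obtained as the image of the zero section under the correspondence $T^*\tilde{\mathcal{N}}\leftarrow \tilde{\mathcal{N}}\times_{\mathcal{N}}T^*\mathcal{N}\to T^*\mathcal{N}$, and restriction to a transversal slice at a generic point $e\in\OO_\nu$ identifies the multiplicity of $[\overline{T^*_{\OO_\nu}\mathfrak{g}}]$ with $\dim H^*(\mathcal{B}_e)$ (using that Springer fibers are paved by affines, so their Euler characteristic is the total Betti number). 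This produces the scalar identity $\sum_\lambda(\dim V_\lambda)\alpha_{\nu,\lambda}=\dim H^*(\mathcal{B}_e)$.

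The main step, which I expect to be the chief obstacle, is to upgrade this equality of dimensions to an equality of $\mathfrak{S}_d$-characters, so that taking $V_\lambda$-isotypic components recovers the individual multiplicities $\alpha_{\nu,\lambda}$. The key point is that the Weyl-group action on $\pi_*\underline{\C}[\dim]$ produced by Springer is intertwined by $CC$ with the natural $\mathfrak{S}_d$-action on the top Borel--Moore homology of the components of the Steinberg variety, whose localization at $e\in\OO_\nu$ is precisely the Springer $\mathfrak{S}_d$-module $H^*(\mathcal{B}_e)$. The cleanest route is to realize both actions via convolution on the Steinberg variety and to verify that the microlocalization underlying $CC$ intertwines the convolution of sheaves with the convolution of Lagrangian cycles. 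Granting this compatibility, decomposing by $V_\lambda$-isotypic component gives
\[
\alpha_{\nu,\lambda}=\dim\Hom_{\mathfrak{S}_d}\bigl(V_\lambda,H^*(\mathcal{B}_e)\bigr),
\]
and the right-hand side equals $K_{\lambda\nu}$ by Hotta--Springer--Kraft.
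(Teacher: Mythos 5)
The paper does not prove this statement; it is quoted directly from Evens and Mirkovi\'c (as Theorem 0.2(b) of their paper) and used as a black box, so there is no internal argument to compare against.

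Evaluating your sketch on its own terms: the high-level plan — Springer resolution, decomposition theorem, isotypic decomposition via the Springer $\mathfrak{S}_d$-action — is the classical Hotta/Kashiwara--Tanisaki route, and incidentally differs from the one in Evens--Mirkovi\'c, who deduce the nilpotent-orbit statement from irreducibility of characteristic cycles on the affine Grassmannian via Lusztig's embedding of the nilpotent cone. The scalar identity $\sum_\lambda(\dim V_\lambda)\,\alpha_{\nu,\lambda}=\dim H^*(\mathcal{B}_e)$ is sound, given semismallness of $\pi$ and parity vanishing of Springer fiber cohomology.

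The step you correctly isolate as the crux, however, remains a genuine gap. The characteristic cycle is a formal integral linear combination of Lagrangians and carries no $\mathfrak{S}_d$-module structure, so ``taking $V_\lambda$-isotypic components of $CC(\pi_*\underline{\C}[\dim])$'' has no literal meaning. What is actually needed is to pass to the microlocal stalk (Morse group) of $\pi_*\underline{\C}[\dim]$ along a generic conormal direction over $e\in\OO_\nu$: this is an honest $\mathfrak{S}_d$-module, it decomposes by the decomposition theorem as $\bigoplus_\lambda M_\lambda\otimes V_\lambda$ with $M_\lambda$ the Morse group of $\ICC(\OO_\lambda)$ (concentrated in a single degree, of dimension $\alpha_{\nu,\lambda}$), and it must then be identified $\mathfrak{S}_d$-equivariantly with $H^*(\mathcal{B}_e)$ carrying the Springer action. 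That identification — the compatibility of the Springer action with the microlocal/vanishing-cycle computation, not merely a dimension count — is exactly the hard content of the theorem. Saying ``grant that microlocalization intertwines convolution of sheaves with convolution of Lagrangian cycles'' is not a reduction to something known; it is a restatement of the thing to be proved. Without a precise argument or citation for that compatibility, your sketch is a plan, not a proof.
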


Let $\mathscr{F}$ be a simple spherical Eisenstein sheaf on $\mathfrak{Coh}_{\alpha}(X)$ whose support intersects the semistable stratum $\mathfrak{Coh}_{(\alpha)}(X)$ (that is, the restriction $(j_{(\alpha)})^*\mathscr{F}$ is a nonzero perverse sheaf of the category $\mathcal{P}^{(\alpha)}$). By Theorem \ref{schiffmanneisenstein}, there exists a partition $\lambda\in\mathscr{P}$ such that $\mathscr{F}=\ICC(\mathscr{L}_{\lambda})$. Then, the characteristic cycle of $(j_{(\alpha)})^*\mathscr{F}$ is given by the following lemma.

\begin{lemma}
\label{sstablemult}
 We have
 \[
  CC((j_{(\alpha)})^*\mathscr{F})=\sum_{\nu\leq \lambda}m_{\nu,\lambda}[\overline{\mathscr{N}_{(\alpha),\nu}}],
 \]
where $m_{\nu,\lambda}=\alpha_{\nu,\lambda}$.
In particular, the map
\[
\begin{matrix}
 CC &:&K_0(\mathcal{P}^{(\alpha)})&\rightarrow& \Z[\overline{\mathscr{N}_{(\alpha),\nu}}:\nu\in\mathscr{P}_{\delta}]\\
 &&\mathscr{F}&\mapsto&CC(\mathscr{F})

 \end{matrix}
\]
is a lower unitriangular isomorphism of $\Z$-modules when the basis of simple perverse sheaves on the left and irreducible components of $\mathscr{N}_{(\alpha)}$ on the right are ordered using the antidominance order on partitions.
\end{lemma}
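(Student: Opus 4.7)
The plan is to reduce the statement to the characteristic-cycle computation on the nilpotent cone of $\mathfrak{gl}_\delta$ already recorded in \eqref{ccnilconeg}, and then apply the Evens--Mirkovi\'c formula. My first step would be to use the isomorphism $\epsilon_\alpha : \mathfrak{Coh}_{(0,\delta)}(X)\to \mathfrak{Coh}_{(\alpha)}(X)$ of Section \ref{sstable} to pass to the torsion case. By construction $\epsilon_\alpha$ transports the stratification indexed by $\mathscr{P}_\delta$ and, by Theorem \ref{schiffmanneisenstein}, sends $\ICC(\mathscr{L}_\lambda)$ to $\ICC(\mathscr{L}_\lambda)$; combined with Lemma \ref{stablehiggs}, the induced isomorphism on cotangent stacks transports $\overline{\mathscr{N}_{(0,\delta),\nu}}$ to $\overline{\mathscr{N}_{(\alpha),\nu}}$. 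Thus I may assume $\alpha=(0,\delta)$ and work on $\mathfrak{Tor}_\delta(X)$.

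Next, I would exploit the étale-local structure of $\mathfrak{Tor}_\delta(X)$: near a torsion sheaf supported at a single point the stack is isomorphic to $X\times \mathcal{N}_\delta/\GL_\delta$, and more generally it admits an étale-local product decomposition indexed by the support pattern in $S^\delta X$, with factors of the form $\mathcal{N}_{d_i}/\GL_{d_i}$. Under this description, the stratum $\mathfrak{Coh}_{(0,\delta),\xi_\nu}(X)$ matches the product of the Jordan orbit stratifications of the factors (as already observed in Section \ref{strattorsion}), and the local system $\mathscr{L}_\lambda$, defined via the $\mathfrak{S}_\delta$-cover and the Springer correspondence on $\mathfrak{Tor}_\delta^{rss}(X)$, identifies étale-locally with the Springer local system on $\mathfrak{gl}_\delta$ indexed by the same partition $\lambda$. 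In particular, $\ICC(\mathscr{L}_\lambda)$ pulls back étale-locally to the Fourier--Sato transform of the Springer intersection complex $\ICC(\OO_\lambda)$ on $\mathfrak{gl}_\delta$.

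The third step would be to invoke that characteristic cycles are preserved under étale pull-back, by Proposition $9.4.3$ of \cite{MR1074006} applied as in the proof of Lemma \ref{multss}. Consequently the multiplicity $m_{\nu,\lambda}$ of $\overline{\mathscr{N}_{(\alpha),\nu}}$ in $CC(\ICC(\mathscr{L}_\lambda))$ coincides with the corresponding multiplicity on $\mathfrak{gl}_\delta$, which by the Evens--Mirkovi\'c theorem equals the Kostka number $\alpha_{\nu,\lambda}=K_{\lambda\nu}$. The classical vanishing $K_{\lambda\nu}=0$ unless $\nu\leq \lambda$, together with $K_{\lambda\lambda}=1$, yields simultaneously the displayed formula, the lower-triangular shape of $CC$, and the unitriangular (hence invertible) change-of-basis which makes $CC:K_0(\mathcal{P}^{(\alpha)})\to \Z[\overline{\mathscr{N}_{(\alpha),\nu}}:\nu\in\mathscr{P}_\delta]$ an isomorphism of $\Z$-modules when both bases are ordered by the antidominance order.

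The main technical hurdle will be to make the étale-local comparison in the second step sufficiently precise, so that the Springer-correspondence convention used to define $\mathscr{L}_\lambda$ on $\mathfrak{Tor}_\delta^{rss}(X)$ matches the convention for $\mathfrak{gl}_\delta$ pointwise in $\lambda$, and so that the characteristic-cycle multiplicity may be safely computed at a single generic point of each stratum $\mathfrak{Coh}_{(0,\delta),\xi_\nu}(X)$. Once this local identification is pinned down, the remaining content is a direct transcription of the $\mathfrak{gl}_\delta$ isomorphism \eqref{ccnilconeg} combined with the Evens--Mirkovi\'c formula.
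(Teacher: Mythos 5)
Your proposal follows the same overall route as the paper: reduce via $\epsilon_\alpha$ to the torsion case $\alpha=(0,\delta)$, localize to identify the situation with the Springer setup on $\mathfrak{gl}_\delta/\GL_\delta$, and conclude from the properties of the map \eqref{ccnilconeg} together with Evens--Mirkovi\'c. The paper's version of the localization step is terser --- it simply says ``the problem is then local... so that we can assume $X=\A^1$'' and cites Laumon --- whereas you usefully spell out what this means via an \'etale-local product decomposition and the non-characteristicity of \'etale maps.

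One imprecision worth flagging in your local model: near a torsion sheaf of length $\delta$ supported at a single point, $\mathfrak{Tor}_\delta(X)$ is \'etale-locally modeled on $\mathfrak{gl}_\delta/\GL_\delta$ (pick a local coordinate at the point and record the induced endomorphism of the fiber), not on $X\times\mathcal{N}_\delta/\GL_\delta$; the latter has dimension $1-\delta$ while $\mathfrak{Tor}_\delta(X)$ has dimension $0$, so they cannot be \'etale-locally isomorphic once $\delta>1$. Likewise, the factors in the product decomposition over a support pattern $\sum d_i x_i$ should be $\mathfrak{gl}_{d_i}/\GL_{d_i}$ (allowing the support points to move), not $\mathcal{N}_{d_i}/\GL_{d_i}$. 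This is precisely the local model that makes the identification of $\ICC(\mathscr{L}_\lambda)$ with a Fourier--Sato-transformed Springer sheaf and the matching of the Lagrangians $\overline{\mathscr{N}_{(0,\delta),\nu}}$ with the $\Lambda_\nu=\overline{T^*_{\OO_\nu}\mathfrak{gl}_\delta}$ legitimate, so it is worth stating correctly; once corrected, the remaining steps --- non-characteristic pullback, \eqref{ccnilconeg}, and Evens--Mirkovi\'c --- go through as you describe and reproduce the paper's argument.
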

\begin{proof}
 We first reduce to the case $\alpha=(0,d)$ for some $d\geq 0$. Recall the isomorphism of stacks
 \[
  \epsilon_{\alpha} : \mathfrak{Coh}_{(0,\delta)}(X)\rightarrow \mathfrak{Coh}_{(\alpha)}(X)
 \]
where $\delta=\gcd(\alpha)$. It is induced by an equivalence of categories so it also gives an isomorphism at the level of the stacks of Higgs bundles:
\[
 \epsilon_{\alpha}:\mathfrak{Higgs}_{(0,\delta)}(X)\rightarrow \mathfrak{Higgs}_{(\alpha)}(X)
\]
making the following natural diagram commute:
\[
 \xymatrix{
 \mathfrak{Higgs}_{(0,\delta)}(X)\ar[r]^{\epsilon_{\alpha}}\ar[d]_{\pi_{(0,\delta)}}&\mathfrak{Higgs}_{(\alpha)}(X)\ar[d]^{\pi_{\alpha}}\\
 \mathfrak{Coh}_{(0,\delta)}(X)\ar[r]^{\epsilon_{\alpha}}&\mathfrak{Coh}_{(\alpha)}(X)
 }
\]
and $\epsilon_{\alpha}$ also induces an isomorphism of the semistable parts of the global nilpotent cones:
\[
 \epsilon_{\alpha} : \mathscr{N}_{(0,\delta)}\rightarrow \mathscr{N}_{(\alpha)}.
\]
Therefore, we can assume $\alpha=(0,d)$ for some $d\geq 0$. The problem is then local (and does not depend anymore on $X$ being an elliptic curve), so that we can assume $X=\A^1$ (for the same reason as in the proof of \cite[Th\'eor\`eme (3.3.13)]{MR899400}). We are now in the situation of the classical Springer correspondence for $\mathfrak{gl}_d$ and the Theorem is a consequence of the properties of the map \eqref{ccnilconeg}.
\end{proof}

\begin{lemma}
\label{unitriang}
 Let $\alpha\in\Z^+$ and $\bm{\alpha}\in HN(\alpha)$. The characteristic cycle map
 \[
  \begin{matrix}
   CC&:&K_0(\mathcal{P}^{\bm{\alpha}})&\mapsto&\Z[\overline{\mathscr{N}_{\bm{\alpha},\bm{\lambda}}}:\bm{\lambda}\in\mathscr{P}_{\bm{\delta}}]\\
   &&\mathscr{F}&\mapsto&CC(\mathscr{F})
  \end{matrix}
 \]
is a lower unitriangular isomorphism of $\Z$-modules when the set of $s$-uplets of partitions $\mathscr{P}_{\bm{\delta}}$ is ordered by the antidominance order, that is
\[
 \bm{\lambda}\leq\bm{\nu}\iff \text{for $1\leq i\leq s$, }\lambda_i\geq \nu_i.
\]
\end{lemma}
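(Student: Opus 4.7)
The plan is to assemble the lemma directly from Lemma \ref{multss} and Lemma \ref{sstablemult}, together with the parametrizations already in place. First I would identify both source and target bases with the same set $\mathscr{P}_{\bm{\delta}}$: by Theorem \ref{schiffmanneisenstein}(3), the simple objects of $\mathcal{P}^{\bm{\alpha}}$ are the perverse sheaves
\[
\mathscr{F}_{\bm{\alpha},\bm{\mu}} = (j_{\bm{\alpha}})_{!*}\,p_{\bm{\alpha}}^{*}\bigl(\ICC(\mathscr{L}_{\mu_1})\boxtimes\cdots\boxtimes\ICC(\mathscr{L}_{\mu_s})\bigr)[d_{\bm{\alpha}}]
\]
indexed by $\bm{\mu}=(\mu_1,\ldots,\mu_s)\in\mathscr{P}_{\bm{\delta}}$, while by Theorem \ref{mainirr} the irreducible components of $\mathscr{N}_{\bm{\alpha}}$ are the closures $\overline{\mathscr{N}_{\bm{\alpha},\bm{\lambda}}}$ indexed by $\bm{\lambda}\in\mathscr{P}_{\bm{\delta}}$. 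The lemma thus reduces to analysing the square matrix of microlocal multiplicities $m_{\bm{\mu},\bm{\lambda}}:=m_{\mathscr{F}_{\bm{\alpha},\bm{\mu}},\bm{\lambda}}$ appearing in \eqref{deccc}.

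Next I would apply the factorisation of Lemma \ref{multss}, which writes
\[
m_{\mathscr{F}_{\bm{\alpha},\bm{\mu}},\bm{\lambda}} \;=\; \prod_{i=1}^{s} m_{\ICC(\mathscr{L}_{\mu_i}),\lambda_i},
\]
and then Lemma \ref{sstablemult}, which identifies each factor with a Kostka number $K_{\mu_i\lambda_i}$. From the classical facts that $K_{\mu\lambda}\neq 0$ forces $\lambda\leq\mu$ in the dominance order and $K_{\mu\mu}=1$, the product $m_{\bm{\mu},\bm{\lambda}}$ is nonzero only when $\lambda_i\leq\mu_i$ in dominance for every $i$---which is precisely $\bm{\lambda}\geq\bm{\mu}$ in the antidominance order used in the statement---and equals $1$ when $\bm{\lambda}=\bm{\mu}$. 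This is exactly the lower unitriangularity claimed. Invertibility over $\Z$ follows immediately, yielding the desired isomorphism of $\Z$-modules.

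There is no genuine obstacle at this stage: all microlocal substance has been absorbed into the two preceding lemmas. The main difficulty of the whole argument sits upstream, in Lemma \ref{sstablemult} (which reduces via the isomorphism $\epsilon_{\alpha_i}$ and the locality of characteristic cycles to the Springer/Evens--Mirkovi\'c computation on $\mathfrak{gl}_d$), and in Lemma \ref{multss} (which handles the behaviour of characteristic cycles under the smooth, non-characteristic pullback $q$ and under exterior products). Once those are granted, the present lemma is just the combinatorial wrap-up: a product of unitriangular matrices indexed by the dominance order on each factor produces a unitriangular matrix for the product antidominance order.
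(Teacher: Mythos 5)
Your argument is correct and is, in expanded form, exactly the paper's own proof: the paper simply says the lemma ``is an immediate consequence of Lemmas \ref{sstablemult} and \ref{multss},'' and you have supplied the missing bookkeeping (common index set $\mathscr{P}_{\bm{\delta}}$, factorisation of the microlocal multiplicities via Lemma \ref{multss}, identification of each factor as a Kostka number via Lemma \ref{sstablemult}, and the support condition $K_{\mu\lambda}\ne 0\Rightarrow\lambda\le\mu$ together with $K_{\mu\mu}=1$). No gaps; this is the intended argument.
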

\begin{proof}
 It is an immediate consequence of Lemmas \ref{sstablemult} and \ref{multss}.
\end{proof}

\section{Proofs of the main theorems \ref{mainirr} and \ref{mainbij}}
\label{Proofs}

\subsection{Some Lemmas}
\label{proofthm1}
\begin{lemma}
\label{lemmanilend}
 Let $d\geq 1$, $\xi\in(\N^{\mathscr{P}})_d$ and $T=\bigoplus_{i=1}^sT_{x_i,\lambda_i}$ be a torsion sheaf in the stratum $\mathfrak{Coh}_{(0,d),\xi}(X)$. Then, the closed subset $\mathscr{N}_T$ of $\End(T)$ of nilpotent endomorphisms is irreducible of codimension
 \[
  \sum_{\lambda\in\mathscr{P}}\xi(\lambda)l(\lambda).
 \]

\end{lemma}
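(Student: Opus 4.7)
The plan is to reduce the problem to a classical computation on the centralizer of a nilpotent matrix. First, since the support points $x_1, \ldots, x_s$ are pairwise distinct, there are no nonzero morphisms between $T_{x_i, \lambda_i}$ and $T_{x_j, \lambda_j}$ for $i \neq j$, so $\End(T) = \bigoplus_{i=1}^s \End(T_{x_i, \lambda_i})$, and an endomorphism of $T$ is nilpotent if and only if each of its components is. This gives $\mathscr{N}_T = \prod_{i=1}^s \mathscr{N}_{T_{x_i, \lambda_i}}$; since finite products of irreducible varieties are irreducible and codimensions add, it suffices to prove that for each partition $\lambda$, the variety $\mathscr{N}_{T_{x, \lambda}}$ is irreducible of codimension $l(\lambda)$ in $\End(T_{x, \lambda})$.

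Next, I invoke the equivalence between $\Tor_x(X)$ and nilpotent representations of the Jordan quiver recalled in Section \ref{torstack} to identify $T_{x, \lambda}$ with a pair $(V_\lambda, N_\lambda)$, where $N_\lambda \in \gl(V_\lambda)$ is a nilpotent endomorphism of Jordan type $\lambda$. Then $\End(T_{x, \lambda})$ becomes the centralizer $\mathfrak{z}(N_\lambda) = \{A \in \gl(V_\lambda) : [A, N_\lambda] = 0\}$, and nilpotent endomorphisms correspond exactly to nilpotent matrices in $\mathfrak{z}(N_\lambda)$. The algebraic Lie subalgebra $\mathfrak{z}(N_\lambda) \subset \gl(V_\lambda)$ carries a Levi decomposition $\mathfrak{z}(N_\lambda) = \mathfrak{u} \oplus \mathfrak{l}$, where $\mathfrak{u}$ is the nilpotent radical (which consists entirely of nilpotent matrices, since it is the Lie algebra of a unipotent group) and $\mathfrak{l} \cong \prod_{i \geq 1} \gl_{a_i(\lambda)}$ is a reductive Levi, with $a_i(\lambda)$ the number of parts of $\lambda$ equal to $i$.

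The crucial claim is that an element $x \in \mathfrak{z}(N_\lambda)$ is nilpotent as a matrix if and only if its image under the Levi projection $\pi : \mathfrak{z}(N_\lambda) \to \mathfrak{l}$ is nilpotent in $\mathfrak{l}$. This follows from the algebraicity of $\mathfrak{z}(N_\lambda)$ (so it is closed under Jordan decomposition in $\gl(V_\lambda)$) combined with the fact that quotienting by the ideal $\mathfrak{u}$ of nilpotent elements preserves Jordan decomposition. Consequently $\mathscr{N}_{\mathfrak{z}(N_\lambda)} = \pi^{-1}(\mathcal{N}_{\mathfrak{l}})$ is the total space of an affine bundle with fiber $\mathfrak{u}$ over $\mathcal{N}_{\mathfrak{l}} = \prod_i \mathcal{N}_{\gl_{a_i(\lambda)}}$, which is irreducible of codimension $\sum_i a_i(\lambda) = l(\lambda)$ in $\mathfrak{l}$. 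Combined with the first reduction, this yields $\mathscr{N}_T$ irreducible of codimension $\sum_i l(\lambda_i) = \sum_{\lambda \in \mathscr{P}} \xi(\lambda) l(\lambda)$, as required. The main obstacle is precisely the Jordan-decomposition identity $\mathscr{N}_{\mathfrak{z}(N_\lambda)} = \pi^{-1}(\mathcal{N}_{\mathfrak{l}})$: this requires identifying the unipotent radical of $Z_{\GL(V_\lambda)}(N_\lambda)$ and checking that its Lie algebra consists of nilpotent matrices, after which the dimension count and irreducibility become routine.
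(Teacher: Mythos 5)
Your proof is correct and is substantively the same as the paper's: both reduce to the observation that $\End(T)$, modulo a nilpotent two-sided ideal, is a product of matrix algebras $\prod \mathscr{M}_{m}(\C)$, and that $\mathscr{N}_T$ is the full preimage of the product of nilpotent cones, hence an affine bundle over an irreducible base of the right codimension. The only real difference is one of packaging: the paper works directly with the Jacobson radical $J$ of the associative algebra $\End(T)$ and uses the elementary fact that $f$ is nilpotent iff its image modulo the nilpotent ideal $J$ is ($f^n\in J$ and $J^m=0$ give $f^{nm}=0$), whereas you reduce to a single support point, pass to the centralizer $\mathfrak z(N_\lambda)$, and invoke the Levi decomposition together with functoriality of the Jordan decomposition for algebraic Lie algebras -- a sound but heavier route to the same conclusion, since your $\mathfrak u$ is precisely the Jacobson radical of the centralizer algebra.
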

\begin{proof}
 Note that $\sum_{\lambda\in\mathscr{P}}\xi(\lambda)l(\lambda)$ is the number of indecomposable summands of $T$ (see Section \ref{strattorsion}). Write
 \[
  T=\bigoplus_{j=1}^tT_j^{\oplus m_j}
 \]
where the $T_j$ are the pairwise distinct indecomposable summands of $T$ and $m_j$ are their multiplicities. We have to show that the codimension of $\mathscr{N}_T$ in $\End(T)$ is $\sum_{j=1}^tm_j$. Let $J$ be the radical of $\End(T)$. The quotient $\End(T)/J$ is isomorphic to
\[
 \prod_{j=1}^t\mathscr{M}_{m_i}(\C)
\]
where for any $n\in\N$, $\mathscr{M}_n(\C)$ is the ring of $n\times n$ matrices with complex coefficients. The projection
\[
 p : \End(T)\rightarrow \End(T)/J
\]
is a fiber bundle and $f\in\End(T)$ is nilpotent if and only if $p(f)$ is nilpotent. Therefore,
\[
 \mathscr{N}_T=p^{-1}\left(\prod_{j=1}^t\mathscr{N}_{m_j}\right)
\]
where $\mathscr{N}_{m_j}$ denotes the nilpotent cone of $\mathscr{M}_{m_j}(\C)$, and for any $n\in\N$, the nilpotent cone of $\mathscr{M}_n(\C)$ is irreducible of codimension $n$ (it is the vanishing locus of the $n$ symmetric polynomials in the $n$ eigenvalues). The result for $\mathscr{N}_T$ follows.
 \end{proof}

For $\alpha\in\Z^+$ and $\xi\in(\N^{\mathscr{P}})_{\delta}$, the dimension of the endomorphism ring of a semistable coherent sheaf whose isomorphism class belongs to $\lvert\Coh_{(\alpha),\xi}(X)\rvert$ is constant and only depends on $\xi$. We denote it $e(\xi)$.

Next, we need the dimension of the stratum $\mathfrak{Coh}_{(\alpha),\xi}(X)$. Thanks to the isomorphism $\epsilon_{\alpha}$ between $\mathfrak{Coh}_{(0,\delta)}(X)$ and $\mathfrak{Coh}_{(\alpha)}(X)$ ($\delta=\gcd{\alpha}$), we can assume that $\alpha=(0,\delta)$. Then, $\sum_{\lambda\in\mathscr{P}}\xi(\lambda)$ is the number of parameters of $\mathfrak{Coh}_{(0,\delta),\xi}(X)$ (see Section \ref{torstack}) and $e(\xi)$ is the dimension of the automorphism group. We easily deduce the following lemma.
\begin{lemma}
\label{lemdimstratss}
 Let $\alpha\in\Z^+$ and $\xi\in(\N^{\mathscr{P}})_{\delta}$. Then, $\mathfrak{Coh}_{(\alpha),\xi}(X)$ is irreducible of dimension
 \[
  \sum_{\lambda\in\mathscr{P}}\xi(\lambda)-e(\xi).
 \]

\end{lemma}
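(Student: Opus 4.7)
The plan is to reduce immediately to the torsion case and then identify the stratum with an open subset of a configuration space of labelled points on $X$, modulo automorphisms. First, using the isomorphism $\epsilon_{\alpha}:\mathfrak{Coh}_{(0,\delta)}(X)\to\mathfrak{Coh}_{(\alpha)}(X)$ recalled in Section \ref{sstable}, which transports the stratification $\{\mathfrak{Coh}_{(0,\delta),\xi}(X)\}$ to $\{\mathfrak{Coh}_{(\alpha),\xi}(X)\}$ by definition, it suffices to treat the case $\alpha=(0,\delta)$.

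In this case, I would unwind the definition of $\mathfrak{Coh}_{(0,\delta),\xi}(X)$ from Section \ref{strattorsion}: choose a list of partitions $(\lambda_1,\dots,\lambda_s)$ in which each $\lambda\in\mathscr{P}$ appears exactly $\xi(\lambda)$ times, so that $s=\sum_{\lambda}\xi(\lambda)$. Its geometric points are isomorphism classes of sheaves $\bigoplus_{i=1}^s T_{x_i,\lambda_i}$ with the $x_i$ pairwise distinct. A natural parameter space is the open subvariety $U_{\xi}\subset X^s$ of $s$-tuples of pairwise distinct points, on which the finite group $\Gamma_{\xi}=\prod_{\lambda\in\mathscr{P}}\mathfrak{S}_{\xi(\lambda)}$ acts freely by permuting coordinates carrying the same partition label. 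The assignment $(x_1,\dots,x_s)\mapsto \bigoplus_i T_{x_i,\lambda_i}$ gives a surjection $U_{\xi}\twoheadrightarrow\lvert\mathfrak{Coh}_{(0,\delta),\xi}(X)\rvert$ whose fibers are exactly the $\Gamma_{\xi}$-orbits.

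Now I would assemble this into a stack-theoretic statement: the universal family over $U_{\xi}$ together with the $\Gamma_{\xi}$-action defines a smooth morphism $[U_{\xi}/\Gamma_{\xi}]\to\mathfrak{Coh}_{(0,\delta),\xi}(X)$ which is a gerbe banded by the automorphism group of the parametrized sheaf. Since the automorphism group has constant dimension $e(\xi)$ over the stratum, the stack $\mathfrak{Coh}_{(0,\delta),\xi}(X)$ is smooth of dimension
\[
\dim [U_{\xi}/\Gamma_{\xi}] - e(\xi) \;=\; \dim U_{\xi} - e(\xi) \;=\; s - e(\xi) \;=\; \sum_{\lambda\in\mathscr{P}}\xi(\lambda) - e(\xi),
\]
using that $\Gamma_{\xi}$ is finite and $U_{\xi}$ is open in $X^s$, hence of dimension $s$. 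Irreducibility is automatic since $U_{\xi}$ is an open subset of the irreducible variety $X^s$ (because $X$ is an irreducible curve, so any big diagonal is a proper closed subset), and the quotient of an irreducible stack by a connected or even just surjective smooth equivalence relation is irreducible.

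The only delicate point is bookkeeping rather than content: checking that the presentation $[U_{\xi}/\Gamma_{\xi}]\to\mathfrak{Coh}_{(0,\delta),\xi}(X)$ really is smooth with the automorphism gerbe as fiber, so that the dimension computation $\dim = (\text{moduli parameters}) - (\text{automorphism dimension})$ is legitimate. Once that is granted, the equality $e(\xi)=\dim\Aut(T)$ for $T$ in the stratum (which is the defining property of $e(\xi)$ introduced just before the lemma) closes the argument.
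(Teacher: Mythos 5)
Your argument follows the same route the paper sketches in the paragraph preceding the lemma: reduce via $\epsilon_\alpha$ to the torsion case $\alpha=(0,\delta)$, parametrize the stratum by the open configuration space $U_\xi\subset X^s$ with $s=\sum_\lambda\xi(\lambda)$, and take the dimension to be $(\text{number of parameters}) - (\text{automorphism dimension})$. Your write-up supplies the bookkeeping and the irreducibility argument that the paper leaves implicit, so this is the same proof, just made explicit.

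One technical imprecision worth flagging: the morphism $[U_\xi/\Gamma_\xi]\to\mathfrak{Coh}_{(0,\delta),\xi}(X)$ that you build from the universal family is \emph{not} a gerbe banded by $\Aut(T)$; its $2$-fiber over a geometric point $[T]$ is the torsor $\Aut(T)$ (dimension $e(\xi)$), not the classifying stack $B\Aut(T)$. If that map really were a gerbe banded by $\Aut(T)$, the relative-dimension computation would yield $\dim\mathfrak{Coh}_{(0,\delta),\xi}(X)=\dim[U_\xi/\Gamma_\xi]+e(\xi)$, the wrong sign. The gerbe is the morphism in the \emph{opposite} direction, $\mathfrak{Coh}_{(0,\delta),\xi}(X)\to[U_\xi/\Gamma_\xi]$, sending a sheaf to its labeled support configuration; its fibers are $B\Aut(T)$ and it is this map that produces $\dim[U_\xi/\Gamma_\xi]-e(\xi)$. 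Your final formula and the irreducibility argument are correct; only the label on the intermediate morphism is off, and either description of the relationship between $[U_\xi/\Gamma_\xi]$ and the stratum gives the stated dimension once the fibers are correctly identified.
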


\begin{lemma}
\label{lemdimstrat}
 For any $\bm{\xi}\in(\N^{\mathscr{P}})_{\bm{\delta}}$, the stratum $\mathfrak{Coh}_{\bm{\alpha},\bm{\xi}}(X)$ is irreducible of dimension
 \[
  \sum_{i=1}^s\left(\sum_{\lambda\in\mathscr{P}}\xi_i(\lambda)-e(\xi_i)\right)-\sum_{i<j}(r_jd_i-r_id_j)
 \]

\end{lemma}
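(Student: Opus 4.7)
The plan is to deduce this from the structural properties of the map
\[
p_{\bm{\alpha}} : \mathfrak{Coh}_{\bm{\alpha}}(X)\rightarrow\prod_{i=1}^s\mathfrak{Coh}_{(\alpha_i)}(X)
\]
that were established in Section \ref{refhnstrat}, together with the already-proved Lemma \ref{lemdimstratss}. By the very definition of the refined stratification,
\[
\mathfrak{Coh}_{\bm{\alpha},\bm{\xi}}(X)=p_{\bm{\alpha}}^{-1}\!\left(\prod_{i=1}^{s}\mathfrak{Coh}_{(\alpha_i),\xi_i}(X)\right),
\]
so the strategy is simply to transport the irreducibility and the dimension count of the target through $p_{\bm{\alpha}}$.

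First, Lemma \ref{lemdimstratss} asserts that each factor $\mathfrak{Coh}_{(\alpha_i),\xi_i}(X)$ is irreducible of dimension $\sum_{\lambda\in\mathscr{P}}\xi_i(\lambda)-e(\xi_i)$. Since the product of irreducible stacks (over $\C$) is irreducible, the base
\[
\prod_{i=1}^{s}\mathfrak{Coh}_{(\alpha_i),\xi_i}(X)
\]
is irreducible of dimension $\sum_{i=1}^{s}\bigl(\sum_{\lambda\in\mathscr{P}}\xi_i(\lambda)-e(\xi_i)\bigr)$.

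Next, I would invoke the key property of $p_{\bm{\alpha}}$ recalled in Section \ref{refhnstrat}: because $X$ is elliptic, the Harder--Narasimhan filtration of any coherent sheaf of HN-type $\bm{\alpha}$ splits (by the Ext-vanishing \eqref{extvan}), and $p_{\bm{\alpha}}$ is an iterated vector bundle stack whose fiber over $(\mathcal{F}_1,\ldots,\mathcal{F}_s)$ identifies with $\pt/\bigoplus_{j<i}\Hom_{\OO_X}(\mathcal{F}_i/\mathcal{F}_{i-1},\mathcal{F}_j/\mathcal{F}_{j-1})$. In particular, $p_{\bm{\alpha}}$ is smooth with connected (indeed affine-stack) fibers of constant dimension $d_{\bm{\alpha}}=-\sum_{j<i}(r_id_j-r_jd_i)=-\sum_{i<j}(r_jd_i-r_id_j)$, as computed in \eqref{dimfiberp}.

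The last step is then standard: the preimage of an irreducible locally closed substack under a smooth morphism with geometrically connected fibers is irreducible, and its dimension is the sum of the dimension of the image and the relative dimension of the map. Applying this to the restriction
\[
p_{\bm{\alpha}}:\mathfrak{Coh}_{\bm{\alpha},\bm{\xi}}(X)\longrightarrow \prod_{i=1}^{s}\mathfrak{Coh}_{(\alpha_i),\xi_i}(X),
\]
I obtain that $\mathfrak{Coh}_{\bm{\alpha},\bm{\xi}}(X)$ is irreducible and
\[
\dim\mathfrak{Coh}_{\bm{\alpha},\bm{\xi}}(X)=\sum_{i=1}^{s}\bigl(\textstyle\sum_{\lambda\in\mathscr{P}}\xi_i(\lambda)-e(\xi_i)\bigr)+d_{\bm{\alpha}}=\sum_{i=1}^{s}\bigl(\textstyle\sum_{\lambda\in\mathscr{P}}\xi_i(\lambda)-e(\xi_i)\bigr)-\sum_{i<j}(r_jd_i-r_id_j).
\]
There is no real obstacle here, provided the two inputs are taken for granted: the only point that deserves care is checking that the fibers of $p_{\bm{\alpha}}$ really are (geometrically) connected in the stacky sense so that irreducibility of the base lifts to the source, but this is immediate from the quotient-stack description of the fiber as a classifying stack of an affine group scheme.
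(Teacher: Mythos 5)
Your proposal is correct and follows exactly the paper's argument: the paper's one-line proof also combines Lemma \ref{lemdimstratss} with the fact that $p_{\bm{\alpha}}$ is a stack bundle of relative dimension $d_{\bm{\alpha}}$ given by \eqref{dimfiberp}. You have merely spelled out the standard intermediate steps (irreducibility of the product, lifting irreducibility through a smooth morphism with connected fibers) that the paper leaves implicit.
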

\begin{proof}
 The morphism $p_{\bm{\alpha}} : \mathfrak{Coh}_{\bm{\alpha}}(X)\rightarrow \prod_{i=1}^s\mathfrak{Coh}_{(\alpha_i)}(X)$ is a stack bundle with fibers of dimension $d_{\bm{\alpha}}$ given by Formula \eqref{dimfiberp}. Together with Lemma \ref{lemdimstratss}, this gives Lemma \ref{lemdimstrat}.
\end{proof}

\begin{lemma}
\label{lemdimfiber}
 For any $\bm{\xi}\in (\N^{\mathscr{P}})_{\bm{\delta}}$, and any geometric point $\mathcal{F}\in\mathfrak{Coh}_{\bm{\alpha},\bm{\xi}}(X)$, $\pi_{\alpha,\mathscr{N}}^{-1}(\mathcal{F})$ is irreducible of dimension
 \[
  \sum_{i<j}(r_jd_i-r_id_j)+\sum_{i=1}^s\left(e(\xi_i)-\sum_{\lambda\in\mathscr{P}}\xi_i(\lambda)l(\lambda)\right).
 \]

\end{lemma}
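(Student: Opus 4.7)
Since $X$ is an elliptic curve, the canonical bundle $K_X$ is trivial, so the fiber $\pi_{\alpha,\mathscr{N}}^{-1}(\mathcal{F})$ is simply the closed subvariety $\mathscr{N}_{\mathcal{F}}\subset\End(\mathcal{F})$ of nilpotent endomorphisms of $\mathcal{F}$. I would begin by invoking the Ext-vanishing \eqref{extvan} to split the Harder-Narasimhan filtration of $\mathcal{F}$, writing $\mathcal{F}=\bigoplus_{i=1}^s\mathcal{F}_i$ with $\mathcal{F}_i$ semistable of class $\alpha_i=(r_i,d_i)$ lying in $\mathfrak{Coh}_{(\alpha_i),\xi_i}(X)$ and slopes $\mu_1>\mu_2>\cdots>\mu_s$.

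The next step is to analyse the shape of an arbitrary $\theta\in\End(\mathcal{F})$ in this decomposition. Writing $\theta=(\theta_{ij})$ with $\theta_{ij}\in\Hom(\mathcal{F}_j,\mathcal{F}_i)$, semistability together with $\mu_j<\mu_i$ for $i<j$ gives $\Hom(\mathcal{F}_j,\mathcal{F}_i)=0$ for $i>j$, so $\theta$ is strictly block upper-triangular with respect to the index ordering $1,\ldots,s$. Using the HN-filtration $F^k=\bigoplus_{i\leq k}\mathcal{F}_i$ (which is preserved by any such $\theta$), one checks that $\theta$ is nilpotent if and only if each diagonal block $\theta_{ii}\in\End(\mathcal{F}_i)$ is nilpotent: indeed $\theta$ then acts nilpotently on every graded piece of a finite filtration. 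Consequently
\[
 \mathscr{N}_{\mathcal{F}} \;\cong\; \prod_{i<j}\Hom(\mathcal{F}_j,\mathcal{F}_i)\;\times\;\prod_{i=1}^s \mathscr{N}_{\mathcal{F}_i},
\]
where $\mathscr{N}_{\mathcal{F}_i}\subset\End(\mathcal{F}_i)$ denotes the nilpotent cone of endomorphisms.

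For the dimension, I would compute each factor separately. For $i<j$, the Ext-vanishing \eqref{extvan} combined with the Euler form \eqref{eulerform} on an elliptic curve yields $\dim\Hom(\mathcal{F}_j,\mathcal{F}_i)=\langle\alpha_j,\alpha_i\rangle=r_jd_i-r_id_j$. For the diagonal pieces, I transport the question to torsion sheaves via the equivalence of categories underlying the isomorphism $\epsilon_{\alpha_i}$ of Section \ref{sstable}: this equivalence identifies $\End(\mathcal{F}_i)$ with the endomorphism ring of a torsion sheaf $T_i\in\mathfrak{Coh}_{(0,\delta_i),\xi_i}(X)$, so $\dim\End(\mathcal{F}_i)=e(\xi_i)$ and, crucially, the nilpotent loci correspond. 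Lemma \ref{lemmanilend} then gives that $\mathscr{N}_{\mathcal{F}_i}$ is irreducible of codimension $\sum_{\lambda\in\mathscr{P}}\xi_i(\lambda)l(\lambda)$ inside $\End(\mathcal{F}_i)$, whence $\dim\mathscr{N}_{\mathcal{F}_i}=e(\xi_i)-\sum_\lambda\xi_i(\lambda)l(\lambda)$.

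Putting everything together, $\mathscr{N}_{\mathcal{F}}$ is a product of affine spaces (irreducible) with the irreducible varieties $\mathscr{N}_{\mathcal{F}_i}$, hence is irreducible, of dimension
\[
 \sum_{i<j}(r_jd_i-r_id_j)\;+\;\sum_{i=1}^s\!\Bigl(e(\xi_i)-\sum_{\lambda\in\mathscr{P}}\xi_i(\lambda)l(\lambda)\Bigr),
\]
as claimed. The only delicate point is the verification that block upper-triangular nilpotence reduces to diagonal nilpotence; all remaining steps are straightforward consequences of results already established in the paper (the Euler form, Ext-vanishing between slopes, the equivalence $\epsilon_{\alpha_i}$, and Lemma \ref{lemmanilend}).
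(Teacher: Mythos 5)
Your proof is correct and follows essentially the same route as the paper: split the Harder--Narasimhan filtration using Ext-vanishing, observe the off-diagonal blocks $\Hom(\mathcal{F}_j,\mathcal{F}_i)$ for $i<j$ are free of dimension $r_jd_i-r_id_j$, reduce nilpotence to the diagonal blocks, and handle those via $\epsilon_{\alpha_i}$ and Lemma \ref{lemmanilend}. (One minor slip: $\theta$ is block upper-triangular, not \emph{strictly} so --- the diagonal blocks $\theta_{ii}$ are present --- but your subsequent reasoning treats them correctly, so this is only a wording issue.)
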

\begin{proof}
The geometric points of the fiber of $\pi_{\alpha,\mathscr{N}}$ over $\mathcal{F}\in\mathfrak{Coh}_{\bm{\alpha},\bm{\xi}}(X)$ are nilpotent endomorphisms of $\mathcal{F}$. Let $\mathcal{F}=\bigoplus_{i=1}^s\mathcal{F}_i$ be its Harder-Narasimhan decomposition (since the Harder-Narasimhan filtration splits), where $[\mathcal{F}_i]=\alpha_i=(r_i,d_i)$. An endomorphism of $\mathcal{F}$ is the datum of $f_{ij}\in \Hom(\mathcal{F}_i,\mathcal{F}_j)$ for $1\leq j\leq i\leq s$ and it is nilpotent if and only if $f_{i,i}$ is nilpotent for any $1\leq i\leq s$. From the equality
\[
 \dim\Hom(\mathcal{F}_i,\mathcal{F}_j)=r_id_j-r_jd_i
\]
and Lemma \ref{lemmanilend}, we get the desired formula.

\end{proof}

\begin{lemma}
\label{dimcompirr}
 For $\bm{\xi}\in (\N^{\mathscr{P}})_{\bm{\delta}}$, $\pi_{\alpha,\mathscr{N}}^{-1}(\mathfrak{Coh}_{\bm{\alpha},\bm{\xi}}(X))$ is irreducible of dimension
 \[
  \sum_{i=1}^s\sum_{\lambda\in\mathscr{P}}\xi_i(\lambda)(1-l(\lambda)).
 \]
In particular, this dimension is nonpositive and equals $0=\dim\mathscr{N}$ if and only if for any $1\leq i\leq s$ and $\lambda\in\mathscr{P}$, $\xi_i(\lambda)\neq 0$ implies $l(\lambda)=1$, which is the definition of $\bm{\xi}$ being regular.
\end{lemma}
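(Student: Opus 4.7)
The plan is to combine the two previous lemmas: Lemma \ref{lemdimstrat} gives the dimension of the base $\mathfrak{Coh}_{\bm{\alpha},\bm{\xi}}(X)$, while Lemma \ref{lemdimfiber} shows that the fiber of $\pi_{\alpha,\mathscr{N}}$ over every geometric point of this stratum is irreducible of a dimension depending only on $\bm{\xi}$ and $\bm{\alpha}$ (and not on the specific sheaf). Since the base is irreducible (Lemma \ref{lemdimstrat}) and the restriction of $\pi_{\alpha,\mathscr{N}}$ to $\pi_{\alpha,\mathscr{N}}^{-1}(\mathfrak{Coh}_{\bm{\alpha},\bm{\xi}}(X))$ has all fibers irreducible of the same dimension, the preimage is itself irreducible, with dimension equal to the sum of the dimensions of the base and the fiber.

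Adding the expression of Lemma \ref{lemdimstrat} to that of Lemma \ref{lemdimfiber}, the cross-terms $\pm\sum_{i<j}(r_jd_i-r_id_j)$ and the endomorphism dimensions $\pm e(\xi_i)$ cancel, leaving
\[
\sum_{i=1}^s\sum_{\lambda\in\mathscr{P}}\xi_i(\lambda) - \sum_{i=1}^s\sum_{\lambda\in\mathscr{P}}\xi_i(\lambda)l(\lambda) = \sum_{i=1}^s\sum_{\lambda\in\mathscr{P}}\xi_i(\lambda)(1-l(\lambda)),
\]
which is the claimed formula.

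For the last statement, observe that $l(\lambda)\geq 1$ for every partition $\lambda\in\mathscr{P}$, so each summand $\xi_i(\lambda)(1-l(\lambda))$ is nonpositive. Hence the total dimension is $\leq 0 = \dim\mathscr{N}$ (recall $\mathscr{N}_{\alpha}$ is Lagrangian in $\mathfrak{Higgs}_{\alpha}(X)$, which has dimension $0$ in the elliptic case). Equality to $0$ forces every summand to vanish, which happens exactly when $\xi_i(\lambda)\neq 0$ implies $l(\lambda)=1$ for all $i$ and $\lambda$, i.e.\ when $\bm{\xi}$ is regular in the sense defined in Section \ref{irrcompnilcone}.

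There is no real obstacle here: the statement is a direct consequence of the two preceding dimension counts, and the only thing to be mildly careful about is that the irreducibility of the total space follows from the irreducibility of both base and (constant-dimensional) fibers in the stacky setting. The substantive content has already been carried out in Lemmas \ref{lemmanilend}, \ref{lemdimstratss}, \ref{lemdimstrat} and \ref{lemdimfiber}; the present lemma is the clean packaging that will be used in Section \ref{proofthm1} to extract exactly the regular $\bm{\xi}$ as the indexing set for the top-dimensional (hence irreducible) components of $\mathscr{N}_{\bm{\alpha}}$, matching the parametrization announced in Theorem \ref{mainirr}.
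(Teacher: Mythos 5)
Your proof is correct and follows essentially the same route as the paper's: combine Lemma \ref{lemdimstrat} (irreducible base, known dimension) with Lemma \ref{lemdimfiber} (irreducible fibers, known dimension), conclude irreducibility and add dimensions, and observe the cancellation of the $\pm e(\xi_i)$ and $\pm\sum_{i<j}(r_jd_i-r_id_j)$ terms. The concluding discussion of nonpositivity and the characterization of equality via $l(\lambda)=1$ also matches the paper.
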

\begin{proof}
 The restriction of $\pi_{\alpha,\mathscr{N}}$
 \[
  \pi_{\alpha,\mathscr{N}} : \pi_{\alpha,\mathscr{N}}^{-1}(\mathfrak{Coh}_{\bm{\alpha},\bm{\xi}}(X))\rightarrow \mathfrak{Coh}_{\bm{\alpha},\bm{\xi}}(X)
 \]
is surjective with irreducible target (whose dimension is given by Lemma \ref{lemdimstrat}) and irreducible fibers (of dimension given by Lemma \ref{lemdimfiber}). Hence, $\pi_{\alpha,\mathscr{N}}^{-1}(\mathfrak{Coh}_{\bm{\alpha},\bm{\xi}}(X))$ is irreducible, of dimension
\[
 \sum_{i=1}^s\left(\sum_{\lambda\in\mathscr{P}}\xi_i(\lambda)-e(\xi_i)\right)-\sum_{i<j}(r_jd_i-r_id_j)+ \sum_{i<j}(r_jd_i-r_id_j)+\sum_{i=1}^s\left(e(\xi_i)-\sum_{\lambda\in\mathscr{P}}\xi_i(\lambda)l(\lambda)\right)
\]
which yields the formula of the Lemma.
\end{proof}

\subsection{Proof of Theorem \ref{mainirr}}
 Let $\Lambda$ be an irreducible component of $\mathscr{N}_{\bm{\alpha}}$. It is of dimension $0$. Since
 \[
  \mathscr{N}_{\bm{\alpha}}=\bigcup_{\bm{\xi}\in (\N^{\mathscr{P}})_{\bm{\delta}}}\pi_{\alpha,\mathscr{N}}^{-1}(\mathfrak{Coh}_{\bm{\alpha},\bm{\xi}}(X))
 \]
and this is a locally closed stratification of $\mathscr{N}_{\bm{\alpha}}$, there exists $\bm{\xi}$ such that $\pi_{\alpha,\mathscr{N}}^{-1}(\mathfrak{Coh}_{\bm{\alpha},\bm{\xi}}(X))\cap\Lambda$ is open and dense in $\Lambda$. Therefore, $\pi_{\alpha,\mathscr{N}}^{-1}(\mathfrak{Coh}_{\bm{\alpha},\bm{\xi}}(X))$ is of dimension $0=\dim\mathscr{N}$ hence $\bm{\xi}$ is regular by Lemma \ref{dimcompirr}. Write $\bm{\xi}=\bm{\xi}_{\bm{\lambda}}$ for some $\bm{\lambda}\in \mathscr{P}_{\bm{\delta}}$. Then, $\Lambda=\overline{\pi_{\alpha,\mathscr{N}}^{-1}(\mathfrak{Coh}_{\bm{\alpha},\bm{\xi}}(X))}=\overline{\mathscr{N}_{\bm{\alpha},\bm{\lambda}}}$.

For the converse, if $\bm{\xi}$ is regular, by Lemma \ref{dimcompirr}, $\pi_{\alpha,\mathscr{N}}^{-1}(\mathfrak{Coh}_{\bm{\alpha},\bm{\xi}}(X))$ is an irreducible substack of $\mathscr{N}_{\bm{\alpha}}$ of dimension $0$ and hence its closure is an irreducible component of $\mathscr{N}_{\bm{\alpha}}$.

This proves the description of the irreducible components of $\mathscr{N}_{\bm{\alpha}}$. The description of the irreducible components of $\mathscr{N}_{\alpha}$ follows immediately. Indeed, if $\Lambda$ is such an irreducible component, we let $\mathfrak{Coh}_{\bm{\alpha}}(X)$ be its supporting stratum. Then, $\Lambda\cap \pi_{\alpha}^{-1}(\mathfrak{Coh}_{\bm{\alpha}}(X))$ is an irreducible component of $\mathscr{N}_{\bm{\alpha}}$. Therefore, it is of the form $\overline{\mathscr{N}_{\bm{\alpha},\bm{\lambda}}}$ for some $\bm{\lambda}\in\mathscr{P}_{\bm{\delta}}$. Hence, $\Lambda=\Overline[1.5]{\mathscr{N}_{\bm{\alpha},\bm{\lambda}}}$.

\subsection{Proof of Theorem \ref{mainbij}}
\label{proofthm}
Let $\mathscr{F}\in \mathcal{P}^{\alpha}$ be a simple Eisenstein perverse sheaf. Let $\mathfrak{Coh}_{\bm{\alpha}}(X)$ be its supporting stratum. Then, its characteristic cycle can be written
\[
 CC(\mathscr{F})=\sum_{(\bm{\beta},\bm{\lambda})}m_{\bm{\beta},\bm{\lambda}}[\Overline[1.5]{\mathscr{N}_{\bm{\beta},\bm{\lambda}}}]
\]
where the sum runs over pairs $(\bm{\beta},\bm{\lambda})$ of a Harder-Narasimhan type $\bm{\beta}=(\beta_1,\hdots,\beta_s)\in HN(\alpha)$ and a multipartition $\bm{\lambda}\in\mathscr{P}_{\bm{\delta}}$, $\bm{\delta}=\gcd(\bm{\beta})$. Moreover, $m_{\bm{\beta},\bm{\lambda}}=0$ unless $\mathfrak{Coh}_{\bm{\beta}}(X)\subset \overline{\mathfrak{Coh}_{\bm{\alpha}}(X)}$ and
\[
 CC(j_{\bm{\alpha}}^*\mathscr{F})=\sum_{\bm{\lambda}\in\mathscr{P}_{\bm{\delta}}}m_{\bm{\alpha},\bm{\lambda}}[\overline{\mathscr{N}_{\bm{\alpha},\bm{\lambda}}}]
\]
is given by Lemma \ref{multss}. By Lemma \ref{unitriang}, we obtain the lower unitriangularity of the characteristic cycle map
\[
 CC : \widehat{K_0(\mathcal{P}^{\alpha})}\rightarrow \widehat{\Z[\Irr(\mathscr{N}_{\alpha})]}.
\]

\section{Perverse sheaves with nilpotent singular support on the stack of coherent sheaves}
\label{nilsingps}
In this section, we will describe explicitly the simple objects of the category $\Perv(\mathfrak{Coh}_{\alpha}(X),\mathscr{N}_{\alpha})$ of perverse sheaves on the stack of coherent sheaves on an elliptic curve whose singular support is nilpotent (that is, a union of some of the irreducible components of $\mathscr{N}_{\alpha}$).

\subsection{Local systems on the semistable locus of the stack of coherent sheaves}

\subsubsection{Local systems on the Picard stack}
\begin{proposition}
\label{locsyspicard}
 Let $\alpha=(1,d)\in\Z^+$ and $\mathscr{L}$ be a local system on $\mathfrak{Pic}_{d}=\mathfrak{Coh}_{(\alpha)}(X)\subset \mathfrak{Coh}_{\alpha}(X)$. Then, $\mathscr{L}$ extends to a local system on $\mathfrak{Coh}_{\alpha}(X)$. 
\end{proposition}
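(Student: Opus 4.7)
The plan is to construct an explicit retraction $\det\colon \mathfrak{Coh}_{(1,d)}(X)\to \mathfrak{Pic}_d(X)$ of the open inclusion $j\colon \mathfrak{Pic}_d(X)\hookrightarrow \mathfrak{Coh}_{(1,d)}(X)$ (which is the inclusion of the semistable locus by the slope computation: a rank-one sheaf is semistable if and only if it is torsion-free, hence a line bundle), and then to take the pullback $\widetilde{\mathscr{L}}=\det^*\mathscr{L}$. Once the retraction is in place, the extension property follows formally: pullback along an arbitrary morphism of stacks preserves the property of being locally constant, so $\widetilde{\mathscr{L}}$ is automatically a local system on $\mathfrak{Coh}_{(1,d)}(X)$, and the identity $\det\circ j=\id_{\mathfrak{Pic}_d(X)}$ gives $j^*\widetilde{\mathscr{L}}=(\det\circ j)^*\mathscr{L}=\mathscr{L}$.

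The main content is thus the construction of $\det$. Since $X$ is a smooth projective curve, every coherent sheaf has homological dimension at most one, so any coherent sheaf $\mathcal{F}$ on $X$ (or on $X\times S$, flat over a base $S$) admits, Zariski-locally on $S$, a two-term locally free resolution $0\to E_1\to E_0\to \mathcal{F}\to 0$. One defines $\det\mathcal{F}:=\det(E_0)\otimes\det(E_1)^{-1}$; this line bundle is independent of the chosen resolution and glues to a canonical line bundle on $X\times S$. Taking the relative degree-$d$ component yields the desired morphism of stacks $\det\colon \mathfrak{Coh}_{(1,d)}(X)\to \mathfrak{Pic}_d(X)$. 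For a line bundle $L$, the resolution $0\to 0\to L\to L\to 0$ shows $\det L=L$, i.e.\ $\det\circ j=\id$.

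The main (minor) obstacle is simply ensuring that the determinant construction is functorial in families and descends to a well-defined morphism of algebraic stacks; this is standard once one writes it in terms of two-term resolutions and verifies independence of the resolution via the usual Euler-characteristic argument. Note that nothing needs to be said about smoothness or flatness of $\det$, because the pullback of a lisse sheaf along \emph{any} morphism of locally Noetherian stacks is lisse.

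Finally, this proof makes no use of the elliptic structure on $X$, so the statement actually holds for any smooth projective curve, which is consistent with the announcement in Subsection~\ref{strategyproof} that the higher-rank extension statement also uses the determinant retraction together with a codimension argument reducing to the rank-one case treated here.
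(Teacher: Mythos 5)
Your proof is correct and takes essentially the same route as the paper: both exhibit the determinant morphism $\det\colon \mathfrak{Coh}_{\alpha}(X)\to\mathfrak{Pic}_d$ as a retraction of the open inclusion $j_{(\alpha)}$ and then set $\widetilde{\mathscr{L}}=\det^*\mathscr{L}$. The only difference is that you spell out the construction of $\det$ via two-term locally free resolutions, which the paper treats as standard and leaves implicit.
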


\begin{proof}
Consider the determinant morphism
\[
 \det : \mathfrak{Coh}_{\alpha}(X) \rightarrow \mathfrak{Pic}_d.
\]
It restricts to the identity on $\mathfrak{Coh}_{(\alpha)}(X)=\mathfrak{Pic}_d$. Let $\mathscr{L}$ be a local system on $\mathfrak{Coh}_{(\alpha)}(X)$. Then, $\mathscr{L}=j_{(\alpha)}^*\det^*(\mathscr{L})$. Consequently, $\det^*(\mathscr{L})$ is a local system on $\mathfrak{Coh}_{\alpha}(X)$ which extends $\mathscr{L}$.
\end{proof}

\subsubsection{Codimension one Harder-Narasimhan strata of $\mathfrak{Coh}_{\alpha}(X)$}
\begin{proposition}
\label{codimonestratum}
 Let $\alpha=(r,d)\in\Z^+$. Then, $\mathfrak{Coh}_{\alpha}(X)$ has a codimension one Harder-Narasimhan stratum if and only if $r$ and $d$ are coprime. In this case, such a stratum is unique.
\end{proposition}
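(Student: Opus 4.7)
The plan is to compute the codimension of an arbitrary HN-stratum $\mathfrak{Coh}_{\bm\alpha}(X)$ in $\mathfrak{Coh}_\alpha(X)$ and then isolate the codimension one case. Since $X$ is elliptic, each connected component $\mathfrak{Coh}_\beta(X)$ is of dimension $0$, and hence so is each semistable open $\mathfrak{Coh}_{(\beta)}(X)$. Combined with the fact that the morphism $p_{\bm\alpha}$ of Section \ref{refhnstrat} is a vector bundle stack of relative dimension $d_{\bm\alpha}$ given by \eqref{dimfiberp}, this yields, for $\bm\alpha=(\alpha_1,\ldots,\alpha_s)\in HN(\alpha)$ with $\alpha_i=(r_i,d_i)$,
\[
\operatorname{codim}_{\mathfrak{Coh}_\alpha(X)}\mathfrak{Coh}_{\bm\alpha}(X)=-d_{\bm\alpha}=\sum_{i<j}(r_jd_i-r_id_j).
\]
The first step would be to note that each summand is a positive integer: for $i<j$ the strict slope decrease $\mu(\alpha_i)>\mu(\alpha_j)$ defining a HN-type is equivalent to $r_jd_i-r_id_j\geq 1$ (also when $r_i=0$, which forces $i=1$ and reads $r_jd_1\geq 1$). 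This gives the lower bound $\binom s 2$ on the codimension, so a codimension one stratum must satisfy $s=2$.

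For $\bm\alpha=(\alpha_1,\alpha_2)$, substituting $r_1+r_2=r$ and $d_1+d_2=d$ simplifies the codimension to $rd_1-r_1d$, so the analysis reduces to the Diophantine equation $rd_1-r_1d=1$. By Bezout this is solvable if and only if $\gcd(r,d)=1$, and in that case the solutions $(r_1,d_1)\in\Z^2$ form a single orbit under $(r_1,d_1)\mapsto (r_1+r,d_1+d)$, containing exactly one representative with $0\leq r_1<r$. The inequality $r_1<r$ guarantees $r_2\geq 1$, and the identity $r_2d_1-r_1d_2=1>0$ is precisely the strict slope inequality $\mu(\alpha_1)>\mu(\alpha_2)$, so this representative defines a bona fide HN-type. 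Non-emptiness of the corresponding stratum is then guaranteed by direct summing semistable sheaves of classes $\alpha_1,\alpha_2$, which exist by Atiyah's classification. This gives existence and uniqueness of a codimension one HN-stratum when $\gcd(r,d)=1$, and the converse is immediate as $\gcd(r,d)\mid rd_1-r_1d=1$.

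The only edge case requiring a little bookkeeping is whether $r_1=0$ can occur: this forces $r\mid 1$, so it happens precisely when $r=1$, producing the familiar HN-type $((0,1),(1,d-1))$; for $r\geq 2$ both $\alpha_1$ and $\alpha_2$ have positive rank. I do not expect any deeper obstacle, as the argument is essentially a dimension count combined with one application of Bezout's identity.
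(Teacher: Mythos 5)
Your proof is correct and follows essentially the same route as the paper: compute the codimension $-d_{\bm\alpha}=\sum_{i<j}(r_jd_i-r_id_j)$, use positivity of each summand to force $s=2$, and reduce to the B\'ezout equation $rd_1-r_1d=1$. Your uniqueness argument is phrased slightly more cleanly by directly noting that $0\leq r_1<r$ picks out a unique representative of the $\Z$-orbit of solutions, whereas the paper uses $0\leq r_1\leq r$ and then rules out $r_1=r$ by a slope comparison, but the substance is identical.
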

\begin{proof}
 Let $\bm{\alpha}=(\alpha_1,\hdots,\alpha_s)\in HN(\alpha)$ be a Harder-Narasimhan type. By Formula \eqref{dimfiberp}, $\dim\mathfrak{Coh}_{\bm{\alpha}}(X)=-\sum_{j<i}(r_id_j-r_jd_i)$ when we write $\alpha_i=(r_i,d_i)$. Consequently, the codimension of $\mathfrak{Coh}_{\bm{\alpha}}(X)$ in $\mathfrak{Coh}_{\alpha}(X)$ is the opposite, since $\dim\mathfrak{Coh}_{\alpha}(X)=0$. Since the slopes strictly decrease, for any $j<i$, $\frac{d_j}{r_j}>\frac{d_i}{r_i}$. Each term of the sum is therefore positive.
 
 Assume that $\mathfrak{Coh}_{\bm{\alpha}}(X)$ is of codimension one. Then necessarily $\bm{\alpha}=(\alpha_1,\alpha_2)$ has length two and $r_2d_1-r_1d_2=1$. Using that $r_2=r-r_1$ and $d_2=d-d_1$, we get $rd_1-r_1d=1$, that is a B\'ezout relation between $r$ and $d$. This proves that $r$ and $d$ have to be coprime. If $rd_1'-r_1'd=1$ is an other B\'ezout relation between $r$ and $d$, there exists $k\in\Z$ such that
 \[
  \left\{\begin{aligned}
          r_1'=r_1+kr\\
          d_1'=d_1+kd
         \end{aligned}
  \right.
 \]
If this new B\'ezout relation comes from a codimension one stratum given by $\bm{\alpha}'=(\alpha_1',\alpha_2')\in HN(\alpha)$, we have furthermore $0\leq r_1'\leq r$. If $0<r_1<r$, then necessarily $r_1'=r_1$ and $d_1'=d_1$ so that $\bm{\alpha}'=\bm{\alpha}$. The case $r_1=r$ is excluded since in this case, $\bm{\alpha}=((r,d_1),(0,d-d_1))$ and the slopes do not decrease. If $r_1=0$, $r_1'=0$ or $r_1'=r$ but the second case is not allowed for the same reason. Hence $\bm{\alpha}=\bm{\alpha}'$ in any case. This proves one implication and the last statement of the Proposition. For the converse, assume $(r,d)$ is coprime. Let $rd_1-r_1d=1$ be the B\'ezout relation with $0\leq r_1<r$. We let $\bm{\alpha}=(\alpha_1,\alpha_2)=((r_1,d_1),(r-r_1,d-d_1))$. Let $r_2=r-r_1$ and $d_2=d-d_1$. The B\'ezout relation can be rewritten $r_2d_1-r_1d_2=1$. Therefore, $\frac{d_1}{r_1}-\frac{d_2}{r_2}=\frac{1}{d_1d_2}>0$. This proves that $\bm{\alpha}\in HN(\alpha)$.
\end{proof}

\begin{remark}
 When $X$ is a smooth projective curve of genus $g\geq 2$, we can prove a similar result which we mention here, but we will not need it in this paper.
 \end{remark}
 \begin{proposition}
 \label{arbitrarycurve}
  Let $X$ be a smooth projective curve of genus $g\geq 2$ and $\alpha\in\Z^+$. Then, the stack $\mathfrak{Coh}_{\alpha}(X)$ has a codimension one Harder-Narasimhan stratum if and only if $\alpha=(1,d)$. In this case, it is unique and corresponds to the Harder-Narasimhan type $((0,1),(1,d-1))$.
 \end{proposition}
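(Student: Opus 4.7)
The plan is to compute the codimension of an arbitrary Harder-Narasimhan stratum in closed form for a smooth projective curve of genus $g \geq 2$, and then to exploit the inequality $g - 1 \geq 1$ to force the HN-type to take a very restricted shape.

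First I would generalize the fiber dimension formula \eqref{dimfiberp} to arbitrary genus. On a general curve the HN-filtration no longer splits, but the standard slope argument still gives $\Hom_{\OO_X}(\mathcal{F}_i/\mathcal{F}_{i-1}, \mathcal{F}_j/\mathcal{F}_{j-1}) = 0$ for $j < i$, so the morphism $p_{\bm{\alpha}} : \mathfrak{Coh}_{\bm{\alpha}}(X) \to \prod_{i=1}^s \mathfrak{Coh}_{(\alpha_i)}(X)$ is an iteration of vector bundle stacks whose fiber has stack-theoretic dimension $-\sum_{j<i}\chi(F_i, F_j)$, where $F_i = \mathcal{F}_i/\mathcal{F}_{i-1}$ and $\chi$ is the Euler form \eqref{eulerform}. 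Combining this with $\dim \mathfrak{Coh}_{(\alpha_i)}(X) = (g-1) r_i^2$ and $\dim \mathfrak{Coh}_{\alpha}(X) = (g-1) r^2$, and expanding via $r^2 = \sum_i r_i^2 + 2\sum_{i<j} r_i r_j$, I expect the codimension to simplify to
\[
\mathrm{codim}\bigl(\mathfrak{Coh}_{\bm{\alpha}}(X),\, \mathfrak{Coh}_{\alpha}(X)\bigr) \;=\; \sum_{i<j}\bigl[(g-1)\, r_i r_j + (r_j d_i - r_i d_j)\bigr].
\]

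The key step is then to observe that each summand is a strictly positive integer. For $i<j$, the strict slope decrease $\mu_i > \mu_j$ gives $r_j d_i - r_i d_j \geq 1$ whenever $r_i, r_j > 0$, and also $r_j d_i \geq 1$ when $r_i = 0$; the case $r_j = 0$ with $j>i$ is forbidden since torsion has slope $+\infty$ and must come first in any HN-filtration. When $g \geq 2$ and both $r_i, r_j > 0$, we additionally have $(g-1) r_i r_j \geq 1$, so such a summand contributes at least $2$. Requiring the total codimension to equal $1$ therefore forces $s = 2$ and the unique summand to satisfy $r_1 = 0$. The residual equation $r_2 d_1 = 1$ yields $r_2 = d_1 = 1$, so $\alpha_1 = (0,1)$ and $\alpha_2 = (1, d-1)$ with $\alpha = (1, d)$. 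Conversely, $((0,1), (1, d-1))$ is a valid HN-type for every $d$ since $\mu_1 = \infty > d - 1 = \mu_2$, proving both implications and the uniqueness claim.

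The main technical obstacle I anticipate is the derivation of the fiber dimension of $p_{\bm{\alpha}}$ in the non-split higher-genus setting: unlike the elliptic case treated in Section \ref{refhnstrat}, where $\mathcal{F} \simeq \bigoplus \mathcal{F}_i/\mathcal{F}_{i-1}$ reduces the computation to an endomorphism calculation, for $g \geq 2$ one must proceed through the iterated extension picture, using $\Ext^1$-computations and the vanishing of $\Hom$ across descending slopes to justify that the successive fibers are vector bundle stacks of the predicted dimension. Once this formula is established, the remainder of the proof is elementary arithmetic.
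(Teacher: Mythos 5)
Your proposal is correct and takes essentially the same route as the paper: compute the relative dimension of $p_{\bm{\alpha}}$ via the Euler form, expand $(g-1)r^2$, reduce to the closed formula $\mathrm{codim} = \sum_{i<j}\bigl[(g-1)r_i r_j + (r_j d_i - r_i d_j)\bigr]$, and then use that each summand is a positive integer (and $\geq 2$ whenever both ranks are positive and $g\geq 2$) to force $s=2$, $r_1=0$, $r_2 d_1=1$. The paper organizes the arithmetic slightly differently (writing out $\dim\mathfrak{Coh}_{\bm{\alpha}}$ first and then subtracting), but the decomposition, the key positivity observation, and the case analysis are the same.
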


\begin{proof}
 Write $\alpha=(r,d)$. Let $\bm{\alpha}=(\alpha_1,\hdots,\alpha_s)\in HN(\alpha)$. Consider the projection $p_{\bm{\alpha}} : \mathfrak{Coh}_{\bm{\alpha}}(X)\rightarrow \prod_{i=1}^s\mathfrak{Coh}_{(\alpha_i)}(X)$. It is a smooth morphism of relative dimension $-\sum_{j<i}\langle\alpha_i,\alpha_j\rangle$ where $\langle-,-\rangle$ is the Euler form (see Formula \eqref{eulerform}). Therefore,
 \[
  \dim\mathfrak{Coh}_{\bm{\alpha}}(X)=\sum_{i=1}^s(g-1)r_i^2+(g-1)\sum_{j<i}r_ir_j-\sum_{j<i}(r_id_j-r_jd_i).
 \]
Since $\dim\mathfrak{Coh}_{\alpha}(X)=(g-1)\left(\sum_{i=1}^sr_i\right)^2$, we have,
\[
 \dim\mathfrak{Coh}_{\bm{\alpha}}(X)-\dim\mathfrak{Coh}_{\alpha}(X)=-(g-1)\sum_{j<i}r_ir_j-\sum_{j<i}(r_id_j-r_jd_i).
\]
By the condition on the slopes, for any $j<i$, $r_id_j-r_jd_i>0$. If $s\geq 3$, then the codimension of $\mathfrak{Coh}_{\bm{\alpha}}(X)$ in $\mathfrak{Coh}_{\alpha}(X)$ is at least two (even three). If $\bm{\alpha}=(\alpha_1,\alpha_2)$,
\[
 \dim\mathfrak{Coh}_{\bm{\alpha}}(X)-\dim\mathfrak{Coh}_{\alpha}(X)=-(g-1)r_1r_2-(r_2d_1-r_1d_2)
\]
so if $r_1r_2\neq 0$, then the codimension is at least two. If $r_1r_2=0$, we have $r_1=0$ for slope reasons. Then, the codimension is one if and only if $r_2=1$ and $d_1=1$. So $\bm{\alpha}=((0,1),(1,d_2))$ This proves the proposition.
\end{proof}

\subsubsection{Local systems on the semistable locus}
\begin{proposition}
\label{extensionlocsys}
Let $X$ be a smooth projective curve, $\alpha\in\Z^+$ and let $\mathscr{L}$ be a local system on the semistable Harder-Narasimhan stratum $\mathfrak{Coh}_{(\alpha)}\subset \mathfrak{Coh}_{\alpha}$. Then, $\mathscr{L}$ extends to a local system on $\mathfrak{Coh}_{\alpha}(X)$.
\end{proposition}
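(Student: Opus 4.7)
The strategy combines three ingredients: smoothness of $\mathfrak{Coh}_\alpha(X)$ (so purity handles the higher-codimension part), the determinant retraction of Proposition \ref{locsyspicard} (to handle the rank one case), and the $\SL_2(\Z)$-equivariance recalled in Section \ref{slaction} (to reduce the elliptic higher-rank coprime case to rank one).

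The first step is a purity reduction. The stack $\mathfrak{Coh}_\alpha(X)$ is smooth, so by purity applied on a smooth atlas, any local system on an open substack whose complement has codimension at least two extends uniquely. Combining Propositions \ref{codimonestratum} and \ref{arbitrarycurve}, codimension one Harder--Narasimhan strata occur only when either $g \geq 2$ and $\alpha = (1, d)$, or $g = 1$ and $\gcd(r, d) = 1$; in either case the stratum is unique. Thus, unless one of these conditions holds, purity alone already proves the proposition, and it remains to extend $\mathscr{L}$ from $\mathfrak{Coh}_{(\alpha)}(X)$ across the single codimension one stratum when it appears.

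If $\alpha = (1, d)$, Proposition \ref{locsyspicard} supplies the extension $\det^*\mathscr{L}$ directly. The remaining case is $g = 1$ with $\gcd(r, d) = 1$ and $r \geq 2$. Writing the unique codimension one HN-type as $\bm{\beta} = ((r_1, d_1),(r - r_1, d - d_1))$ with $r d_1 - r_1 d = 1$, I would set $\gamma = \bigl(\begin{smallmatrix} d_1 & -r_1 \\ -d & r \end{smallmatrix}\bigr) \in \SL_2(\Z)$. A direct check gives $\gamma \cdot \alpha = (1, 0)$ and $\gamma \cdot \bm{\beta} = ((0,1),(1,-1))$, which is the codimension one HN-type in $HN((1,0))$. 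With $A = \{(\alpha), \bm{\beta}\}$, which is downward-closed in $HN(\alpha)$ for the order of Section \ref{slaction}, the hypotheses of \eqref{isostra} are satisfied and yield an isomorphism $i_\gamma : \mathfrak{Coh}_A \xrightarrow{\sim} \mathfrak{Coh}_{\gamma \cdot A} \subset \mathfrak{Coh}_{(1, 0)}(X)$. Transporting $\mathscr{L}$ through $i_\gamma$ gives a local system on the semistable stratum $\mathfrak{Pic}_0 = \mathfrak{Coh}_{((1,0))}(X)$, which extends to all of $\mathfrak{Coh}_{(1, 0)}(X)$ by Proposition \ref{locsyspicard}; restricting to $\mathfrak{Coh}_{\gamma \cdot A}$ and pulling back through $i_\gamma^{-1}$ yields the extension of $\mathscr{L}$ to $\mathfrak{Coh}_A$. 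A final application of purity extends across the remaining codimension $\geq 2$ strata, giving the desired extension to $\mathfrak{Coh}_\alpha(X)$.

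I expect the main obstacle to be the explicit $\SL_2(\Z)$ bookkeeping in the final step: exhibiting the correct $\gamma$, verifying by direct computation that $\gamma \cdot \bm{\beta}$ is a legitimate HN-type in $HN((1,0))$, and checking that the non-trivial combinatorial condition of Section \ref{slaction} holds for our two-element $A$. Once this is in hand the remaining arguments are formal, combining only smoothness and purity with Proposition \ref{locsyspicard}.
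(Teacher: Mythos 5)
Your proposal is correct and follows essentially the same route as the paper: reduce to the codimension one stratum via a codimension argument, invoke Propositions \ref{codimonestratum} and \ref{arbitrarycurve} to identify the exceptional coprime cases, use the $\SL_2(\Z)$-equivariance \eqref{isostra} of the Harder--Narasimhan stratification to reduce to rank one, and then conclude with the determinant retraction of Proposition \ref{locsyspicard}. The only cosmetic difference is that you normalize to $\gamma\cdot\alpha=(1,0)$ and exhibit the matrix $\gamma$ explicitly, whereas the paper normalizes to $(1,1)$ and merely asserts the existence of a suitable $\gamma\in\SL_2(\Z)$ from the Bézout relation of Proposition \ref{codimonestratum}.
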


\begin{proof}
 We assume $X$ is an elliptic curve. The same arguments combined with Proposition \ref{arbitrarycurve} give a proof for curves of genus $g\geq 2$. If $\gcd(\alpha)>1$, then the closed complement $\mathfrak{Coh}_{\alpha}(X)\setminus\mathfrak{Coh}_{(\alpha)}(X)$ of the open substack $\mathfrak{Coh}_{(\alpha)}(X)$ of $\mathfrak{Coh}_{\alpha}(X)$ is of codimension at least two by Proposition \ref{codimonestratum}. Since any local system extends over closed substacks of codimension at least two, $\mathscr{L}$ extends to a local system on $\mathfrak{Coh}_{\alpha}(X)$.
 
 If $\gcd(\alpha)=1$, we let $\bm{\alpha}=(\alpha_1,\alpha_2)$ be the Harder-Narasimhan type of the codimension one stratum of $\mathfrak{Coh}_{\alpha}(X)$ (see Proposition \ref{codimonestratum}). The open substack $\mathfrak{Coh}_{\alpha}(X)\cup\mathfrak{Coh}_{\bm{\alpha}}(X)$ is of codimension at least two. It suffices to show that $\mathscr{L}$ extends over this open substack. Since $\begin{pmatrix}
  r&d\\
  r_1&d_1
  \end{pmatrix}
$ has determinant one (see the proof of Proposition \ref{codimonestratum}), there exists $\gamma\in\SL_2(\Z)$ such that $\gamma\cdot\alpha=(1,1)$ and $\gamma\cdot \alpha_1=(0,1)$. By the isomorphism \eqref{isostra} of Section \ref{slaction}, it suffices to consider the case when $\alpha=(1,1)$. In this case, the result is implied by Proposition \ref{locsyspicard}
\end{proof}

\subsection{Twisted spherical Eisenstein perverse sheaves on the stack of coherent sheaves on an elliptic curve}

\subsubsection{The surface braid group}
Let $X$ be a connected topological surface. We will mainly be interested in the case when $X$ is an elliptic curve. Let $n\in\N$. The pure braid group $P_n(X)$ is by definition the fundamental group of $X^n\setminus \Delta$ while the braid group $B_n(X)$ is the fundamental group of $S^nX\setminus\Delta$. The $\mathfrak{S}_n$-covering $p_n : X^n\setminus\Delta\rightarrow S^nX\setminus\Delta$ induces an exact sequence of groups:
\[
 1\rightarrow P_n(X)\rightarrow B_n(X)\rightarrow\mathfrak{S}_n\rightarrow 1.
\]

\begin{proposition}
\label{canoque}
 There is a canonical quotient $B_n(X)\rightarrow K$ making the following diagram commute:
 \[
  \xymatrix{
   1\ar[r]&P_n(X)\ar[r]\ar@{->>}[d]& B_n(X)\ar[r]\ar@{->>}[d]&\mathfrak{S}_n\ar[r]\ar@{=}[d]& 1\\
   1\ar[r]& \pi_1(X)^n\ar[r]&K\ar[r]&\mathfrak{S}_n\ar[r]&1
  }
 \]

\end{proposition}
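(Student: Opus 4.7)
The plan is to first give the group $K$ explicitly and then produce the homomorphism $B_n(X)\to K$ via a topological model. I would define $K := \pi_1(X)^n \rtimes \mathfrak{S}_n$, where $\mathfrak{S}_n$ acts on $\pi_1(X)^n$ by permutation of the factors. The obvious projection and inclusion give the bottom short exact sequence of the diagram. The entire work of the proof lies in constructing a canonical surjection $\varphi: B_n(X)\twoheadrightarrow K$ lifting the identity on $\mathfrak{S}_n$.

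For this I would use the Borel construction. Let $E\mathfrak{S}_n$ be a contractible free $\mathfrak{S}_n$-space and set
\[
(X^n)_{h\mathfrak{S}_n} := E\mathfrak{S}_n \times_{\mathfrak{S}_n} X^n.
\]
Because every connected surface other than $S^2$ and $\mathbb{RP}^2$ is aspherical (certainly so for an elliptic curve), $X^n$ is a $K(\pi_1(X)^n,1)$, and the Serre spectral sequence of the fibration $X^n\to (X^n)_{h\mathfrak{S}_n}\to B\mathfrak{S}_n$ shows that $(X^n)_{h\mathfrak{S}_n}$ is a $K(K,1)$, with a splitting of $\pi_1$ coming from any base point of $X$. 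Since $\mathfrak{S}_n$ acts freely on $X^n\setminus\Delta$, there is a canonical homotopy equivalence $(X^n\setminus\Delta)_{h\mathfrak{S}_n}\simeq S^nX\setminus\Delta$, and the $\mathfrak{S}_n$-equivariant open inclusion $X^n\setminus\Delta\hookrightarrow X^n$ produces a continuous map
\[
\iota : S^nX\setminus\Delta \;\longrightarrow\; (X^n)_{h\mathfrak{S}_n}.
\]
I would define $\varphi := \pi_1(\iota)$.

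Commutativity of the two squares then falls out. For the right square, both $B_n(X)\to\mathfrak{S}_n$ and $K\to\mathfrak{S}_n$ are the $\pi_1$ of the structural maps to $B\mathfrak{S}_n$, and these are compatible with $\iota$. For the left square, restricting $\varphi$ to $P_n(X) = \ker(B_n(X)\to\mathfrak{S}_n) = \pi_1(X^n\setminus\Delta)$ recovers the $\pi_1$-morphism of the open inclusion $X^n\setminus\Delta\hookrightarrow X^n$, i.e.\ precisely the canonical quotient $P_n(X)\twoheadrightarrow \pi_1(X^n)=\pi_1(X)^n$. Surjectivity of this last map is standard: $\Delta\subset X^n$ has real codimension two, so every loop in $X^n$ can be perturbed to avoid it. Surjectivity of $\varphi$ then follows from the five lemma applied to the two rows.

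The only genuinely nontrivial step is the identification $\pi_1((X^n)_{h\mathfrak{S}_n})\cong K$, which requires the asphericity of $X^n$; everything else is a formal consequence of the homotopy long exact sequence of the Borel fibration and of functoriality of $\pi_1$. The canonicity is manifest since the construction depends only on the choice of a classifying model $E\mathfrak{S}_n$, unique up to contractible choice. An alternative, purely algebraic route would be to note that the conjugation action of $B_n(X)$ on its normal subgroup $P_n(X)$ permutes the factors of $\pi_1(X)^n$ through $\mathfrak{S}_n$, so the kernel $N$ of $P_n(X)\to\pi_1(X)^n$ is normal in $B_n(X)$ and one may take $K=B_n(X)/N$; but the topological construction above makes the resulting extension explicit as a semidirect product.
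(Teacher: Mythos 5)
Your proof is correct, but it takes a genuinely different route than the paper's. The paper's proof is a one-liner: the surjection $P_n(X)\twoheadrightarrow\pi_1(X)^n$ induced by the open inclusion $X^n\setminus\Delta\hookrightarrow X^n$ has kernel $N$ normal in $B_n(X)$, and $K$ is simply defined as the push-out of the upper extension along this surjection, i.e.\ $K=B_n(X)/N$; you mention this exact argument as your ``alternative, purely algebraic route'' in the final sentence. Your main construction instead builds $K=\pi_1(X)^n\rtimes\mathfrak{S}_n$ topologically, as $\pi_1$ of the Borel space $(X^n)_{h\mathfrak{S}_n}$, and realises $\varphi$ as $\pi_1$ of the natural map $S^nX\setminus\Delta\simeq(X^n\setminus\Delta)_{h\mathfrak{S}_n}\to(X^n)_{h\mathfrak{S}_n}$. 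Both are correct. The push-out argument is shorter and proves exactly the stated proposition (though the paper does not explicitly verify that $N$ is normal in $B_n(X)$, a point you handle). Your Borel-construction argument costs more machinery (asphericity of $X^n$, homotopy quotients, a fixed-point section) but buys more: it establishes in one stroke that $K$ is the wreath product $\pi_1(X)^n\rtimes\mathfrak{S}_n$ with the permutation action, a fact the paper only establishes separately in the remark immediately following the proposition, via an explicit braid-theoretic construction of a section. Two small stylistic slips: you invoke the ``Serre spectral sequence'' to see $(X^n)_{h\mathfrak{S}_n}$ is aspherical, but what you actually use (and later correctly name) is the long exact sequence of homotopy groups of the Borel fibration; and the asphericity is not needed for the proposition itself, only for the $K(K,1)$ upgrade.
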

\begin{proof}
 The surjective morphism $P_n(X)\rightarrow \pi_1(X^n)\simeq \pi_1(X)^n$ is induced by the inclusion $X^n\setminus\Delta\rightarrow X^n$ and the group $K$ is constructed by push-out of the upper exact sequence.
\end{proof}
\begin{remark}
 It is possible to show that $K$ is isomorphic to the wreath product $\pi_1(X)^n\rtimes\mathfrak{S}_n$. Indeed, we can construct a section to the projection $K\rightarrow\mathfrak{S}_n$ as follows. We fix a set $Q=(q_1,\hdots,q_n)$ of $n$ distinct points on $X$ and interpret $B_n(X)$ as the group of braids on $X$, that is of isotopy classes of collections of $n$ paths $p=(p_1,\hdots,p_n)$ parametrized by $[0,1]$ on $X$ starting and ending at $Q$ such that for any $t\in[0,1]$ and any $i\neq j$, $p_i(t)\neq p_j(t)$. The left-most quotient is the quotient by the (normal) subgroup generated by braids whose strands are trivial in $\pi_1(X)$. For clarity, we assume that $Q$ is contained in an open subset $D$ of $X$ homeomorphic to a disk. For $\sigma\in\mathfrak{S}_n$, we choose a braid $p_{\sigma}\in B_n(X)$ such that each strand $p_{\sigma,i}$ is contained in $D$ (this is possible since $S^nD\setminus\Delta$ is path-connected) and such that $p_{\sigma}$ induces the permutation $\sigma$. The section of $K\rightarrow \mathfrak{S}_n$ is then the composition of the map $\mathfrak{S}_n\rightarrow \B_n(X)$, $\sigma\mapsto p_{\sigma}$, with the projection $B_n(X)\rightarrow K$. It is easily seen that two different choices for $p_{\sigma}$ give the same element in $K$ so that the composition is a group homomorphism.
\end{remark}

When $X$ is a projective algebraic curve over $\C$, we can consider all local systems, that is representations of $\pi_1(X)$ and we can also work with local systems coming from finite coverings of $X$, that is with representation of the \'etale fundamental group $\pi_1^{\acute{e}t}(X)$ which is the profinite completion $\widehat{\pi_1(X)}$ of $\pi_1(X)$. In the case when $X$ is an elliptic curve, $\pi^{\acute{e}t}_1(X)=\hat{\Z}\times\hat{\Z}$ and its representations correspond to local systems on $X$ having finite monodromy. Representations of $K$ are those local systems on $S^n(X)\setminus \Delta$ whose pullback by the $\mathfrak{S}_n$-covering $p_n : X^n\setminus\Delta\rightarrow S^nX\setminus\Delta$ extends to $X^n$.

The following remark will be useful in Section \ref{simpletwisted}. Let $\rho : P_n(X)\rightarrow\GL(V)$ be an irreducible representation of $P_n(X)$ and $\mathscr{L}$ be the corresponding local system on $X^n\setminus\Delta$. The local system $(p_n)_*\mathscr{L}$ on $S^nX\setminus\Delta$ is associated to the induced representation $\Ind_{P_n(X)}^{B_n(X)}(\rho)$. The pull-back $p_n^*(p_n)_*\mathscr{L}$ is the restriction to $P_n(X)$ of $\Ind_{P_n(X)}^{B_n(X)}(\rho)$. Its decomposition into irreducible representations of $P_n(X)$ is
\[
 \bigoplus_{\omega P_n(X)\in B_n(X)/P_n(X)=\mathfrak{S}_n} \omega\cdot\rho
\]
where $\omega\cdot\rho : P_n(X)=\pi_1(X^n\setminus\Delta)\rightarrow\GL(V)$, $g\mapsto \rho(\omega g\omega^{-1})$. This is the representation obtained from $\rho$ by permuting according to $\omega$ the factors of $X^n\setminus\Delta$. Therefore, when $\rho$ factors through $\pi_1(X)^n$ and hence corresponds to a local system of the form $\mathscr{L}=\mathscr{L}_1\boxtimes\hdots\hdots\boxtimes\mathscr{L}_n$ on $X^n$, $p_n^*(p_n)_*\mathscr{L}_{X^n\setminus\Delta}$ is a local system on $X^n\setminus\Delta$ which extends to $X^n$ and this extension is the local system
\begin{equation}
\label{directsum}
 \bigoplus_{\sigma\in\mathfrak{S}_n}(\mathscr{L}_{\sigma(1)}\boxtimes\hdots\boxtimes\mathscr{L}_{\sigma(n)}).
\end{equation}
Consequently, if $\mathscr{L}'$ is a simple direct summand of $(p_n)_*\mathscr{L}_{X^n\setminus\Delta}$, $p_n^*\mathscr{L}'$ is a local system on $X^n\setminus\Delta$ which extends to $X^n$ and the extension is a direct sum of some of the local systems appearing in the direct sum \eqref{directsum}. Consequently, if $\mathscr{L}_i, \mathscr{L}'_i$, $1\leq i\leq n$ are two collections of simple local systems on $X$ such that that the second is not a permutation of the first, then, letting $\mathscr{L}=\mathscr{L}_1\boxtimes\hdots\boxtimes\mathscr{L}_n$ and $\mathscr{L}'=\mathscr{L}'_1\boxtimes\hdots\boxtimes\mathscr{L}'_n$, the simple direct summands of $(p_n)_*\mathscr{L}$ and $(p_n)_*\mathscr{L}'$ are pairwise non-isomorphic.

\subsubsection{A class of perverse sheaves on the moduli stack of coherent sheaves on an elliptic curve}
In this Section, we define a category of perverse sheaves on the moduli stack of coherent sheaves on an elliptic curve. We will call these sheaves \emph{twisted spherical Eisenstein sheaves}. Let $\alpha\in\Z^+$. We let $\mathcal{P}^{\alpha}_{tw}$ be the semisimple subcategory of $\Perv(\mathfrak{Coh}_{\alpha}(X))$ whose simple objects are the simple perverse sheaves appearing with a possible shift as a direct summand of the induction of the perverse sheaves $\ICC(\mathscr{L}_i)$, $1\leq i\leq t$ for some $t\geq 1$, $\mathscr{L}_i$ local systems on $\mathfrak{Coh}_{(\alpha_i)}(X)$ with coprime $\alpha_i\in\Z^+$ such that $\sum_{i=1}^t\alpha_i=\alpha$. We denote $\mathcal{P}_{tw,f}^{\alpha}$ the category obtained in a similar way when allowing only local systems with finite monodromy. We will respectively denote $\mathcal{Q}_{tw}^{\alpha}$ and $\mathcal{Q}_{tw,f}^{\alpha}$ the full semisimple triangulated subcategories of $D(\mathfrak{Coh}_{\alpha}(X))$ generated by $\mathcal{P}^{\alpha}_{tw}$ (resp. $\mathcal{P}^{\alpha}_{tw,f}$). Since the map $q$ of the induction diagram (Section \ref{eisensteinsheaves}) is smooth and the map $p$ is proper, by the argument of \cite[\S 3.3]{MR2942792} involving the decomposition theorem, the induction functor induces a functor
\[
 \Ind_{\beta,\alpha}:\mathcal{Q}^{\beta}_{\sharp}\boxtimes\mathcal{Q}^{\alpha}_{\sharp}\rightarrow \mathcal{Q}^{\alpha+\beta}_{\sharp}
\]
for $\sharp=tw$ and $\sharp=tw,f$. For $\sharp=tw$, we need the generalization of the decomposition theorem which applies to any semisimple perverse sheaves and not only to whose of geometric origin (see for example \cite{MR3595140}). The next proposition shows that in the definition of $\mathcal{P}^{\alpha}_{\sharp}$ ($\sharp=tw$ or $\sharp=tw,f$), it suffices to consider inductions for $\rk(\alpha_i)\leq 1$.

\begin{lemma}
\label{linebun}
 Let $\alpha\in\Z^+$ and $\mathscr{L}$ be a local system on $\mathfrak{Coh}_{\alpha}(X)$. If $\rk(\alpha)>1$, there exists $d'\in\Z$ such that $\gcd(r-1,d-d')=1$, a local system $\mathscr{L}'$ on $\mathfrak{Coh}_{(1,d')}(X)$ and a local system $\mathscr{L}''$ on $\mathfrak{Coh}_{(r-1,d-d')}(X)$ such that $\mathscr{L}$ is a direct summand (possibly shifted) of the induction $\Ind_{(r-1,d-d'),(1,d')}(\mathscr{L}''\boxtimes\mathscr{L}')$.
 \end{lemma}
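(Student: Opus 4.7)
The strategy is to exploit the smooth morphism $q$ of the induction diagram. As noted in Section~\ref{eisensteinsheaves}, $q$ is a vector bundle stack, and hence has stack-theoretically simply connected fibers; this lets us descend the pullback $p^{*}\mathscr{L}$ to a local system on the product $\mathfrak{Coh}_{(r-1,d-d')}(X) \times \mathfrak{Coh}_{(1,d')}(X)$, after which an external tensor product decomposition combined with the projection formula and the decomposition theorem realizes $\mathscr{L}$ as the desired summand. I will argue under the assumption that $\mathscr{L}$ is irreducible; the general semisimple case follows by decomposing $\mathscr{L}$ into irreducible summands.

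First, I choose $d' \in \Z$ satisfying $\gcd(r-1, d-d') = 1$ and $d' < d/r$. These conditions are compatible: for any residue class modulo $r-1$ coprime to $r-1$ (which exists when $r > 1$) one can take $d'$ arbitrarily negative in that class. The inequality $d' < d/r$ guarantees, via the Riemann--Roch estimate $\chi(\mathcal{L}^{-1} \otimes \mathcal{F}) = d - rd' > 0$ for $\mathcal{L}$ of degree $d'$ and $\mathcal{F}$ semistable of class $\alpha$, that a generic such $\mathcal{F}$ admits a line subsheaf of class $(1, d')$ whose quotient is semistable. Let $\mathfrak{Exact}^{\circ} \subset \mathfrak{Exact}_{(r-1, d-d'), (1, d')}$ denote the open substack parametrizing extensions $0 \to \mathcal{G} \to \mathcal{F} \to \mathcal{H} \to 0$ in which $\mathcal{G}$, $\mathcal{H}$, and $\mathcal{F}$ are all semistable; then $p|_{\mathfrak{Exact}^{\circ}}$ surjects onto a dense open $U \subset \mathfrak{Coh}_{\alpha}(X)$ containing $\mathfrak{Coh}_{(\alpha)}(X)$.

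The restriction $q|_{\mathfrak{Exact}^{\circ}}$ remains a vector bundle stack over the product $\mathfrak{Coh}_{(r-1, d-d')}(X) \times \mathfrak{Coh}_{(1, d')}(X)$, so $q^{*}$ is an equivalence of categories between local systems on the product and on $\mathfrak{Exact}^{\circ}$. There is thus a unique local system $\mathscr{M}$ on the product with $q^{*}\mathscr{M} \simeq p^{*}\mathscr{L}|_{\mathfrak{Exact}^{\circ}}$. Writing $\mathscr{M} = \bigoplus_{i} \mathscr{L}''_{i} \boxtimes \mathscr{L}'_{i}$ as a direct sum of external tensor products (since an irreducible local system on a product of connected stacks is an external tensor product of irreducibles on the factors), the projection formula yields, up to appropriate shifts,
\[
\bigoplus_{i} \Ind_{(r-1, d-d'), (1, d')}(\mathscr{L}''_{i} \boxtimes \mathscr{L}'_{i})\big|_{U} \;\simeq\; p_{!}\, p^{*}\mathscr{L}|_{U} \;\simeq\; \mathscr{L}|_{U} \otimes p_{!}\underline{\C}_{\mathfrak{Exact}^{\circ}}.
\]

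The decomposition theorem applied to the proper morphism $p|_{\mathfrak{Exact}^{\circ}} : \mathfrak{Exact}^{\circ} \to U$, whose generic fibers are irreducible by a standard count on line subsheaves of a semistable bundle, ensures that a shift of $\underline{\C}_{U}$ is a direct summand of $p_{!}\underline{\C}_{\mathfrak{Exact}^{\circ}}$. Hence $\mathscr{L}|_{U}$ is a direct summand of $\bigoplus_{i} \Ind(\mathscr{L}''_{i} \boxtimes \mathscr{L}'_{i})|_{U}$, and since the induction is a semisimple perverse sheaf (using the generalization of the decomposition theorem invoked in Section~\ref{catsheaf}) while $\mathscr{L}$ is itself the intermediate extension of $\mathscr{L}|_{U}$, we conclude that $\mathscr{L}$ is a summand of $\Ind_{(r-1, d-d'), (1, d')}(\mathscr{L}''_{i_{0}} \boxtimes \mathscr{L}'_{i_{0}})$ for some index $i_{0}$. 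The most delicate points will be the rigorous verification that $q|_{\mathfrak{Exact}^{\circ}}$ has stack-theoretically simply connected fibers and that the generic fiber of $p|_{\mathfrak{Exact}^{\circ}}$ is irreducible; both follow from moduli-theoretic considerations about line subsheaves and their quotients on an elliptic curve.
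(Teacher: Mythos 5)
Your proposal follows essentially the same strategy as the paper's proof: pull $\mathscr{L}$ back along the proper map $p$, descend the resulting local system through the vector bundle stack $q$ (exploiting contractibility of its fibers), and invoke the decomposition theorem to realize $\mathscr{L}$ as a shifted summand. The paper, however, is somewhat more economical. It chooses $d'\ll 0$ so that $p$ is \emph{globally} surjective onto $\mathfrak{Coh}_{\alpha}(X)$, which lets one conclude immediately that $\mathscr{L}$ is a shifted summand of $p_{*}p^{*}\mathscr{L}$, and then descends $p^{*}\mathscr{L}$ along $q$ in one step. You instead restrict to a carefully chosen dense open $U$ (semistable sub, quotient and total sheaf), carry out the argument over $U$, and then pass back to $\mathfrak{Coh}_{\alpha}(X)$ via intermediate extension. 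This works, but introduces moving parts the paper's route avoids: the intermediate extension step, and a claim about irreducibility of generic fibers of $p|_{\mathfrak{Exact}^{\circ}}$ (which is not actually needed --- for a proper surjective map between smooth stacks, the shifted constant sheaf on the target always appears as a summand of the pushforward of the constant sheaf, regardless of fiber irreducibility).

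Two smaller points. First, your assertion that $U$ \emph{contains} $\mathfrak{Coh}_{(\alpha)}(X)$ is an over-claim: it would require every semistable sheaf of class $\alpha$ to admit a line subsheaf of degree \emph{exactly} $d'$ with semistable quotient, which does not follow merely from $\chi>0$. Fortunately this is harmless, since your argument only needs $U$ to be a dense open substack, and $\mathfrak{Exact}^{\circ}$ being nonempty suffices for that. Second, your condition $d'<d/r$ is weaker than the paper's $d'\ll 0$: the paper needs the stronger condition precisely because it aims for genuine surjectivity of $p$ rather than dominance over a dense open, so be aware that the two hypotheses are tuned to slightly different proof structures and are not interchangeable.
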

\begin{proof}
 Let $d'\in\Z$, $d'\ll 0$. Then, any nonzero morphism from a line bundle of degree $d'$ to a semistable coherent sheaf of class $\alpha$ is injective. We choose $d'$ so that $(r-1,d-d')$ is coprime. We let $\alpha'=(1,d)$ and $\alpha''=(r-1,d-d')$. The restriction diagram is
 \[
  \xymatrix{
  \mathfrak{Coh}_{\alpha''}(X)\times\mathfrak{Coh}_{\alpha'}(X)&\ar[l]_(.4)q\mathfrak{Exact}_{\alpha',\alpha''}\ar[r]^p&\mathfrak{Coh}_{\alpha}(X)
  }.
 \]
The map $p$ is proper, and surjective by our condition on $d'$. Therefore, $\mathscr{L}$ appears up to a shift in $p_*p^*\mathscr{L}$. Since $p^*\mathscr{L}$ is a local system on $\mathfrak{Exact}_{\alpha',\alpha''}$ and $q$ is a vector bundle stack, there exists a local system $\tilde{\mathscr{L}}$ on $\mathfrak{Coh}_{\alpha''}(X)\times\mathfrak{Coh}_{\alpha'}(X)$ such that $q^*\tilde{\mathscr{L}}=p^*\mathscr{L}$. We write $\tilde{\mathscr{L}}=\mathscr{L}''\boxtimes\mathscr{L}'$ for local systems $\mathscr{L}'$ (resp. $\mathscr{L}''$) on $\mathfrak{Coh}_{\alpha'}(X)$ (resp. $\mathfrak{Coh}_{\alpha''}(X)$). Then, $\mathscr{L}$ appears up to a shift in $\Ind_{\alpha'',\alpha'}(\mathscr{L}''\boxtimes\mathscr{L}')$.
\end{proof}

As in Section \ref{catsheaf}, for $\alpha\in\Z^+$ and $\bm{\alpha}\in HN(\alpha)$, we let $\mathcal{P}^{\bm{\alpha}}_{tw}$ (resp. $\mathcal{P}^{\bm{\alpha}}_{tw,f}$) denote the additive subcategory of perverse sheaves $(j_{(\alpha)}^*)\mathscr{F}=\mathscr{F}_{\mathfrak{Coh}_{\geq\bm{\alpha}}(X)}$ where $\mathscr{F}\in\mathcal{P}^{\alpha}_{tw}$ (resp. $\mathcal{P}^{\alpha}_{tw,f}$) satisfies $\supp\mathscr{F}=\overline{\supp\mathscr{F}\cap \mathfrak{Coh}_{\bm{\alpha}}(X)}$.

\subsection{Perverse sheaves with nilpotent singular support on the semistable locus}
\label{simpletwisted}
 In this section, we describe the simple perverse sheaf on the semistable locus of the stack of coherent sheaves on an elliptic curve whose singular support is nilpotent. We first define a family of local systems on the open substack of torsion sheaves of degree $n$ supported at $n$ pairwise distinct points of $X$. Consider the stack of torsion sheaves of degree $n$ on $X$, $\mathfrak{Tor}_n$. The open substack $\mathfrak{Tor}^{rss}_n$ of torsion sheaves of degree $n$ supported at $n$ distinct points is isomorphic to $(S^nX\setminus\Delta)/\G_m^n$ where the action of $(\G_m)^n$ is trivial. It therefore admits a $\mathfrak{S}_n$-cover $p_n : (X^n\setminus\Delta)/\G_m^n\rightarrow \mathfrak{Tor}_n^{rss}$. Let $\mathscr{L}_i$ be simple local systems on $X$ for $1\leq i\leq n$. When a basis of $\pi_1(X)\simeq \Z^2$ is fixed, this datum is equivalent to a $n$-uplet $\bm{z}\in((\C^*)^2)^n$ describing the monodromy. The exterior product $\mathscr{L}_1\boxtimes\hdots\boxtimes\mathscr{L}_n$ is a local system on $X^n$. We consider its restriction to $X^n\setminus\Delta$ and let $\mathscr{L}$ be the induced local system on $(X^n\setminus\Delta)/\G_m^n$. Then, the (underived) pushforward $(p_n)_*\mathscr{L}$ decomposes as a direct sum of local systems on $\mathfrak{Tor}_n^{rss}$ indexed by representations of the symmetric group:
\[
 \pi_{*}\mathscr{L}\simeq\bigoplus_{\lambda\in\mathscr{P}_n}\mathscr{L}_{\bm{z},\lambda}\otimes V_{\lambda}
\]
where $V_{\lambda}$ is the multiplicity vector space ($\mathfrak{S}_n$ acts on $\pi_*\mathscr{L}$ and this is its decomposition in isotypical components, where each $\mathscr{L}_{\bm{z},\lambda}$ is assumed to be irreducible). The multiplicity complexes $V_{\lambda}$ are nonzero. One can see this as follows. Since $\mathfrak{S}_n$ acts on $\pi_*\mathscr{L}$, we have an algebra morphism $\C[\mathfrak{S}_n]\rightarrow \End(\pi_*\mathscr{L})$. This algebra morphism is injective since the fiber of $\pi$ over any point $x\in S^nX\setminus\Delta$ has $n!$ points and $(\pi_*\mathscr{L})_x\simeq \C^{n!}$ is acted on by $\mathfrak{S}_n$ by the regular representation. Hence, the composition $\C[\mathfrak{S}_n]\rightarrow \End(\pi_*\mathscr{L})\rightarrow \End((\pi_*\mathscr{L})_x)$ is an isomorphism. In particular, this gives non-trivial orthogonal idempotents $1_{\lambda}$ in $\End(\pi_*\mathscr{L})$, one for each partition $\lambda$ of $n$. On the other hand, $\End(\pi_*\mathscr{L})=\bigoplus_{\lambda\in\mathscr{P}_n}\Hom_{\C}(V_{\lambda},V_{\lambda})$. The non-trivial idempotents of this algebra are the identity morphisms $\id_{V_{\lambda}}\in\Hom(V_{\lambda},V_{\lambda})$. Consequently, none of the multiplicity vector spaces $V_{\lambda}$ is trivial. Therefore, all the local systems $\mathscr{L}_{\bm{z},\lambda}$ occur in the direct sum decomposition. Moreover, by the discussion following Proposition \ref{canoque}, if $\bm{z}$ and $\bm{z}'$ cannot be obtained from each other by permuting the factors of $((\C^*)^2)^n$, then for any partitions $\lambda$ and $\nu$, $\mathscr{L}_{\bm{z},\lambda}$ and $\mathscr{L}_{\bm{z}',\nu}$ are not isomorphic. Of course, $\pi_*\mathscr{L}$ only depends on the local systems $\mathscr{L}_i$ up to permutation. 

Let $\alpha\in\Z^+$. By the isomorphism $\epsilon_{\alpha}:\mathfrak{Tor}_{\delta}=\mathfrak{Coh}_{(0,\delta)}\rightarrow \mathfrak{Coh}_{(\alpha)}(X)$ (Section \ref{sstable}), we can transport the local systems on $\mathfrak{Tor}_{\delta}^{rss}$ on local systems on an open substack of $\mathfrak{Coh}_{\alpha}(X)$. These are still denoted $\mathscr{L}_{\bm{z},\lambda}$ for $\bm{z}\in((\C^*)^2)^{\delta}$ and $\lambda\in\mathscr{P}_{\delta}$.

\begin{proposition}
\label{miclocss}
 Let $\alpha\in\Z^+$ and $\mathscr{F}$ be a simple perverse sheaf on $\mathfrak{Coh}_{(\alpha)}(X)$ having a nilpotent singular support. Then, $\mathscr{F}$ is the intersection cohomology sheaf $\ICC(\mathscr{L}_{\bm{z},\bm{\lambda}})$ of one of the local systems defined above.
\end{proposition}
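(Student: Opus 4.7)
The plan is to reduce to the torsion-sheaf case via $\epsilon_\alpha$, pin down the supporting stratum of $\mathscr{F}$ as the generic regular one, and then describe the admissible local systems via a monodromy argument.

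First, I would apply the equivalence $\epsilon_\alpha \colon \mathfrak{Tor}_\delta \xrightarrow{\sim} \mathfrak{Coh}_{(\alpha)}(X)$ from Section~\ref{sstable} (where $\delta = \gcd(\alpha)$). As in the commutative diagram appearing in the proof of Lemma~\ref{sstablemult}, this equivalence lifts to an isomorphism of Higgs stacks and carries $\mathscr{N}_{(0,\delta)}$ onto $\mathscr{N}_{(\alpha)}$. Hence $\epsilon_\alpha^*\mathscr{F}$ is a simple perverse sheaf on $\mathfrak{Tor}_\delta$ with nilpotent singular support, and it suffices to treat the torsion case.

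Second, I would identify the supporting stratum. Write $\epsilon_\alpha^*\mathscr{F} = \ICC(\mathscr{M})$ with $\mathscr{M}$ a simple local system on a smooth locally closed substack, which by refinement may be assumed to sit inside a single stratum $\mathfrak{Coh}_{(0,\delta),\xi}(X)$ of Section~\ref{strattorsion}. Since $\dim\mathfrak{Tor}_\delta = 0 = \dim\mathscr{N}_{(0,\delta)}$ and $SS(\mathscr{F})$ is a Lagrangian cycle contained in $\mathscr{N}_{(0,\delta)}$, it must be a union of irreducible components of the latter. Combining Corollary~\ref{irrcompss} with Lemma~\ref{dimcompirr}, we force $\xi = \xi_\lambda$ for some $\lambda \in \mathscr{P}_\delta$, and a comparison of the conormal and nilpotent-fibre dimensions across the non-regular boundary strata further pins this down to $\xi = \xi_{(1^\delta)}$, so that $S = \mathfrak{Tor}_\delta^{rss}$.

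Third, I would describe the admissible local systems $\mathscr{M}$ on $\mathfrak{Tor}_\delta^{rss} \cong (S^\delta X\setminus\Delta)/\G_m^\delta$. The trivial $\G_m^\delta$-action makes such local systems correspond to representations of $B_\delta(X)$. Nilpotency of $SS(\mathscr{F})$ near the boundary divisor swept out by the big diagonal of $X^\delta$ translates into the requirement that $p_\delta^*\mathscr{M}$ extends across $\Delta \subset X^\delta$, i.e.\ that $\mathscr{M}$ factors through the quotient $K = \pi_1(X)^\delta \rtimes \mathfrak{S}_\delta$ from Proposition~\ref{canoque}. By the analysis of induced representations carried out in the preceding discussion, the simple local systems with this property are precisely the $\mathscr{L}_{\bm{z},\lambda}$ constructed above, parametrised by the $\mathfrak{S}_\delta$-orbit of $\bm{z} \in ((\C^*)^2)^\delta$ and a partition $\lambda \in \mathscr{P}_\delta$ indexing an irreducible representation of $\mathfrak{S}_\delta$.

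The main obstacle will be the sharpening in Step~2 needed to rule out supporting strata $\mathfrak{Coh}_{(0,\delta),\xi_\lambda}(X)$ with $\lambda \ne (1^\delta)$, since at a generic point of such a stratum the conormal and nilpotent fibres have matching dimension. The resolution rests on a microlocal analysis at the non-regular boundary strata sitting in the closure, where Lemma~\ref{dimcompirr} produces a strict dimension gap that forces characteristic directions outside $\mathscr{N}$ for any IC extension originating from a strictly smaller regular stratum.
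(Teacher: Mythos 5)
Your proposal follows the same high-level outline as the paper --- reduce to torsion sheaves via $\epsilon_\alpha$, identify the supporting stratum, then classify the admissible local systems --- but the paper takes a decisive shortcut that your argument does not: after the reduction to $\mathfrak{Tor}_\delta$, the paper uses the fact that singular support is a local invariant to replace $X$ by $\A^1$, identifying $\mathfrak{Coh}_{(0,\delta)}(\A^1)\simeq\mathfrak{gl}_\delta/\GL_\delta$, and then simply cites the classical Springer-theoretic result that perverse sheaves on $\mathfrak{gl}_\delta/\GL_\delta$ with nilpotent singular support are exactly the Fourier--Sato transforms of the intersection complexes of nilpotent orbits. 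This single citation accomplishes in one stroke both your Step~2 (identifying $\mathfrak{Tor}_\delta^{rss}$ as the supporting stratum) and the local half of your Step~3 (constraining the monodromy around $\Delta$). The global monodromy data $\bm{z}$ is then grafted on top, which is why the local affine-line reduction is harmless.

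The genuine gap in your proposal is the passage you flag yourself in Step~2. The claim that Lemma~\ref{dimcompirr} ``produces a strict dimension gap that forces characteristic directions outside $\mathscr{N}$'' is not a valid argument. That lemma is a pure dimension count for $\pi_{\alpha,\mathscr{N}}^{-1}$ over each stratum; it tells you nothing about \emph{which} covectors occur in the singular support of an intermediate extension at the boundary. Indeed, as you yourself observe, at a generic point of a stratum $\mathfrak{Coh}_{(0,\delta),\xi_\lambda}(X)$ with $\lambda\neq(1^\delta)$, the conormal direction and the nilpotent fiber have the \emph{same} dimension, and the conormal of the stratum actually coincides with the corresponding component $\overline{\mathscr{N}_{(\alpha),\lambda}}$ of the nilpotent cone --- so dimension counting can never rule these out. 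What actually kills them is a concrete microlocal (vanishing-cycle) computation at the deeper boundary strata: for example, already for $\mathfrak{gl}_2$ and $\lambda=(2)$, the IC of the constant sheaf on $\overline{\{\text{non-rss matrices}\}}$ has a characteristic direction at a scalar matrix $\mu I$ consisting of all traceless covectors, which contains semisimple (non-nilpotent) matrices and hence escapes $\mathscr{N}$. This computation is precisely the content that the paper absorbs by citing Springer theory for $\mathfrak{gl}_\delta$. Likewise, your Step~3 asserts without proof that nilpotency of $SS(\mathscr{F})$ near $\Delta$ is equivalent to $p_\delta^*\mathscr{M}$ extending across $\Delta$; this is the correct conclusion, but again it is the conclusion of the $\mathfrak{gl}_\delta$ computation, not a formal consequence of Proposition~\ref{canoque}. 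If you want a self-contained proof in the style of your proposal, you would need to carry out the microlocal analysis at the boundary explicitly rather than invoking a dimension gap; the more economical route is the paper's local reduction to the affine line.
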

\begin{proof}
 By the isomorphisms $\epsilon_{\alpha}$ between $\mathfrak{Tor}_{\delta}(X)$ and $\mathfrak{Coh}_{(\alpha)}(X)$, it suffices to consider the case when $\alpha=(0,d)$ for some $d\geq 1$. The singular support only depends on the local behaviour of the perverse sheaf, so we can assume that $X=\A^1$ is the affine line (as in the proof of \cite[Th\'eor\`eme (3.3.13)]{MR899400}). In this case, $\mathfrak{Coh}_{(0,d)}(X)\simeq \mathfrak{gl}_d/\GL_d$ and the nilpotent cone is $\mathscr{N}_{\gl_d}=\{(x,\xi)\in \mathfrak{gl}_d\times\mathscr{N}\mid [x,\xi]=0\}/\GL_d$. By Springer theory, perverse sheaves on $\mathfrak{gl}_d/\GL_d$ with singular support in $\mathscr{N}_{\mathfrak{gl}_d}$ are the Fourier-Sato transforms of the intersection cohomology sheaves of nilpotent orbits of $\mathfrak{gl}_d$. These are given by the local systems $\mathscr{L}_{\lambda}$ on the open substack $\mathfrak{gl}_d^{rss}/\GL_d\simeq (\A^n\setminus\Delta)/\mathfrak{S}_n$ of regular semisimple elements which appears in $(\pi_{\mathfrak{g}})_!\underline{\C}_{\tilde{\mathfrak{g}}^{rss}}$ (see Section \ref{singsuppsup}). This proves the proposition.
\end{proof}

\subsection{Proof of Theorem \ref{microlocchar}}
\label{psnilsing}

For the convenience of the reader, we recall Theorem \ref{microlocchar}.
\begin{theorem}
\label{miclocchar}
 The simple objects of the category $\Perv(\mathfrak{Coh}_{\alpha}(X),\mathscr{N}_{\alpha})$ of perverse sheaves having a nilpotent singular support are precisely the simple twisted spherical Eisenstein perverse sheaves.
\end{theorem}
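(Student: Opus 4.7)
The plan is to prove the two inclusions separately. For the inclusion $\mathcal{P}^\alpha_{tw}\subset\Perv(\mathfrak{Coh}_\alpha(X),\mathscr{N}_\alpha)$, let $\mathscr{F}$ be a simple twisted spherical Eisenstein perverse sheaf. By definition $\mathscr{F}$ is, up to a shift, a direct summand of an iterated induction $p_!q^*(\boxtimes_i\ICC(\mathscr{L}_i))$ with each $\mathscr{L}_i$ a local system on the semistable stratum $\mathfrak{Coh}_{(\alpha_i)}(X)$ of a coprime class $\alpha_i\in\Z^+$. By Proposition \ref{extensionlocsys} each $\mathscr{L}_i$ extends to a local system $\tilde{\mathscr{L}}_i$ on the smooth stack $\mathfrak{Coh}_{\alpha_i}(X)$, so that $\ICC(\mathscr{L}_i)=\tilde{\mathscr{L}}_i[\dim\mathfrak{Coh}_{\alpha_i}(X)]$. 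Then $q^*(\boxtimes_i\tilde{\mathscr{L}}_i)$ is a shifted local system on $\mathfrak{Exact}_{\alpha_1,\ldots,\alpha_s}$, and the remark following Proposition \ref{singsuppnil} yields nilpotent singular support for its proper pushforward, hence for the direct summand $\mathscr{F}$.

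For the converse, I take a simple perverse sheaf $\mathscr{F}$ on $\mathfrak{Coh}_\alpha(X)$ with nilpotent singular support and supporting HN stratum $\mathfrak{Coh}_{\bm{\alpha}}(X)$. In the restricted iterated induction diagram \eqref{iteratedind} the map $p$ is an isomorphism by uniqueness of the HN filtration, while $q$ is an iterated vector bundle stack whose fibers are of the form $B V$ for $V$ a vector space, and in particular carry only constant perverse sheaves (up to a shift). After identifying $\mathscr{F}|_{\mathfrak{Coh}_{\bm{\alpha}}}$ with a perverse sheaf on $\mathfrak{Exact}_{\bm{\alpha}}$ via $p^{-1}$, a cotangent-correspondence analysis parallel to that of Proposition \ref{singsuppnil} shows that the nilpotent locus in $T^*\mathfrak{Exact}_{\bm{\alpha}}$ coincides with the $q$-pullback of $\prod_i\mathscr{N}_{(\alpha_i)}$. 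Combined with the triviality of perverse sheaves on the fibers of $q$, this forces an isomorphism
\[
p^*\mathscr{F}|_{\mathfrak{Coh}_{\bm{\alpha}}}\simeq q^*\left(\boxtimes_i\mathscr{G}_i\right)[\dim q]
\]
for simple perverse sheaves $\mathscr{G}_i$ on $\mathfrak{Coh}_{(\alpha_i)}(X)$ having nilpotent singular support. Proposition \ref{miclocss} then identifies each $\mathscr{G}_i$ as $\ICC(\mathscr{L}_{\bm{z}_i,\bm{\lambda}_i})$ for a local system of the prescribed form.

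To finish, I extend each $\mathscr{L}_{\bm{z}_i,\bm{\lambda}_i}$ to $\mathfrak{Coh}_{\alpha_i}(X)$ using Proposition \ref{extensionlocsys} and apply Theorem \ref{schiffmanneisenstein} together with a standard intermediate-extension / decomposition-theorem argument to conclude that $\mathscr{F}$ is a simple direct summand of $\Ind(\boxtimes_i\ICC(\mathscr{L}_{\bm{z}_i,\bm{\lambda}_i}))$, and hence lies in $\mathcal{P}^\alpha_{tw}$. For the last assertion of the theorem, when $\gcd(\alpha_i)>1$ the local system $\mathscr{L}_{\bm{z}_i,\bm{\lambda}_i}$ is, by its very construction, a direct summand of the $\mathfrak{S}_{\delta_i}$-pushforward of an exterior product of rank one characters from a product of copies of $\mathfrak{Coh}_{(0,1)}(X)$, while for positive-rank $\alpha_i$ Lemma \ref{linebun} iteratively reduces the inducing data to local systems on $\mathfrak{Coh}_{(1,d)}(X)$ with $d\in\Z$ running through an unbounded-below subset. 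The main obstacle in the whole argument is the microlocal step in the converse direction: rigorously identifying the nilpotent locus of $T^*\mathfrak{Exact}_{\bm{\alpha}}$ with the $q$-pullback of $\prod_i\mathscr{N}_{(\alpha_i)}$, and then descending $p^*\mathscr{F}|_{\mathfrak{Coh}_{\bm{\alpha}}}$ along the vector bundle stack $q$ to obtain the factorization displayed above.
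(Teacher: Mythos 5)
Your proof follows the same two-direction structure as the paper's and the forward direction is essentially identical (extend each local system via Proposition \ref{extensionlocsys} so that the inducing data become shifted local systems, then appeal to the argument of Proposition \ref{singsuppnil}). For the converse, however, you have correctly identified that there is a gap in your argument, and the paper closes it by working with a finer stratification than the Harder--Narasimhan one that you use. The paper first observes that since $\supp\mathscr{F}=\pi_{\alpha}(SS(\mathscr{F}))$ and the singular support is nilpotent, the support of $\mathscr{F}$ is the closure of a \emph{refined} stratum $\mathfrak{Coh}_{\bm{\alpha},\bm{\lambda}}(X)$, not merely of an HN-stratum. Over $p^{-1}(\mathfrak{Coh}_{\bm{\alpha},\bm{\lambda}}(X))$ the singular support is exactly the conormal bundle, which forces $\mathscr{H}=p^*\mathscr{F}|_{\mathfrak{Coh}_{\bm{\alpha}}(X)}$ to be the intermediate extension of a genuine local system $\mathscr{L}$ on that refined stratum; the descent along the vector bundle stack $q$ and the factorization $\mathscr{L}=q^*(\mathscr{L}_1\boxtimes\cdots\boxtimes\mathscr{L}_s)$ are then routine. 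You instead assert a global identification of the nilpotent locus of $T^*\mathfrak{Exact}_{\bm{\alpha}}$ with the $q$-pullback of $\prod_i\mathscr{N}_{(\alpha_i)}$ and flag it as the main obstacle, but without the refined stratification you have no a priori reason for $\mathscr{H}$ to be a single intermediate extension of a local system, which is what the descent step really needs.

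There is also a concrete error in your final step. The local systems $\mathscr{L}_{\bm{z}_i,\bm{\lambda}_i}$ produced by Proposition \ref{miclocss} live on the \emph{regular semisimple} open substack of $\mathfrak{Coh}_{(\alpha_i)}(X)$ (the locus where the underlying torsion sheaf of degree $\delta_i$ is supported at $\delta_i$ distinct points), not on the whole semistable locus of a coprime class, so Proposition \ref{extensionlocsys} does not apply to them: they do not in general extend as local systems, only as IC sheaves. The paper does not attempt this; it simply invokes that each $\mathscr{G}_i=\ICC(\mathscr{L}_{\bm{z}_i,\bm{\lambda}_i})$ has nilpotent singular support (by compatibility of singular supports with external products and with the smooth map $q$), and that such $\mathscr{G}_i$ are already twisted spherical Eisenstein sheaves by Proposition \ref{miclocss} together with the construction of the $\mathscr{L}_{\bm{z},\lambda}$ as direct summands of pushforwards; closure of $\mathcal{P}^{\alpha}_{tw}$ under induction and direct summands then finishes the argument without any extension step.
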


\begin{proof}
 We first show that simple twisted spherical Eisenstein perverse sheaves have nilpotent singular support. This follows from the definition of these perverse sheaves as direct summands of induction of perverse sheaves of the form $\ICC(\mathscr{L})$ where $\mathscr{L}$ is a shifted local system on $\mathfrak{Coh}_{(\alpha)}(X)$, $\gcd(\alpha)=1$ by the same argument as in the proof of Proposition \ref{singsuppnil} and the fact that $\ICC(\mathscr{L})$ is by Proposition \ref{extensionlocsys} a (shifted) local system on $\mathfrak{Coh}_{\alpha}(X)$, so its singular support is the zero section of $T^*\mathfrak{Coh}_{\alpha}(X)$. 
 
 Conversely, we need to show that a simple perverse sheaf on $\mathfrak{Coh}_{\alpha}(X)$ having nilpotent singular support belongs to $\mathcal{P}^{\alpha}_{tw}$. Let $\mathscr{F}$ be such a perverse sheaf. Let $\mathfrak{Coh}_{\bm{\alpha},\bm{\lambda}}(X)$ be the stratum of $\mathfrak{Coh}_{\alpha}(X)$ such that $\supp\mathscr{F}=\overline{\mathfrak{Coh}_{\bm{\alpha},\bm{\lambda}}(X)}$ (which exists since $\supp\mathscr{F}=\pi_{\alpha}(SS(\mathscr{F}))$). We consider the iterated induction diagram restricted to the HN-stratum of type $\bm{\alpha}=(\alpha_s,\hdots,\alpha_1)$:
 \[
  \xymatrix{
  \prod_{i=1}^s\mathfrak{Coh}_{(\alpha_{i})}(X)&\ar[l]_(.35)q\mathfrak{Exact}_{\bm{\alpha}}\ar[r]^p&\mathfrak{Coh}_{\bm{\alpha}}(X)
  }
 \]
The map $p$ is an isomorphism and $q$ a vector bundle stack. Let $\mathscr{H}=p^*\mathscr{F}_{\mathfrak{Coh}_{\bm{\alpha}}(X)}$. Write $\bm{\lambda}=(\lambda_1,\hdots,\lambda_s)$. The singular support of the restriction of $\mathscr{H}$ to $q^{-1}\left(\prod_{i=1}^s\mathfrak{Coh}_{(\alpha_i),\lambda_i}\right)=p^{-1}(\mathfrak{Coh}_{\bm{\alpha},\bm{\lambda}}(X))$ is the conormal bundle to $p^{-1}(\mathfrak{Coh}_{\bm{\alpha},\bm{\lambda}}(X))$. Therefore, $\mathscr{H}=\ICC(\mathscr{L})$ for a local system $\mathscr{L}$ on $p^{-1}(\mathfrak{Coh}_{\bm{\alpha},\bm{\lambda}}(X))$. Hence, there exists a local system $\mathscr{L}'$ on $\prod_{i=1}^s\mathfrak{Coh}_{(\alpha_{i}),\lambda_i}(X)$ such that $\mathscr{L}=q^*\mathscr{L}'$. We can write $\mathscr{L}'=\mathscr{L}_1\boxtimes\hdots\boxtimes\mathscr{L}_s$ for local systems $\mathscr{L}_i$ on $\mathfrak{Coh}_{(\alpha),\lambda_i}$, $1\leq i\leq s$. Then, $\mathscr{H}=q^*(\mathscr{G}_1\boxtimes\hdots\boxtimes\mathscr{G}_s)[\dim q]$ and $\mathscr{F}$ is a simple constituent (up to some shift) of the induction $\Ind_{\alpha_1,\hdots,\alpha_s}(\mathscr{G}_1\boxtimes\hdots\boxtimes\mathscr{G}_s)$ for $\mathscr{G}_i=\ICC(\mathscr{L}_i)$. Since the singular support of a product is the product of the singular supports and $q$ is compatible with the stratifications (by definition), for any $1\leq i\leq s$, $\mathscr{G}_i$ is a perverse sheaf on $\mathfrak{Coh}_{(\alpha_i)}(X)$ with nilpotent singular support. By Proposition \ref{miclocss}, for $1\leq i\leq s$, $\mathscr{G}_i$ is a twisted spherical Eisenstein perverse sheaf, so the same is true for $\mathscr{F}$.
\end{proof}

\subsection{The simple twisted spherical Eisenstein perverse sheaves}

In this section, we describe explicitly the simple objects of the categories $\mathcal{P}^{\alpha}_{tw}$ and $\mathcal{P}^{\alpha}_{tw,f}$ in the spirit of \cite[Proposition 3.4]{MR2942792}. We let $\bm{\mu}\subset \C^*$ be the subgroup of roots of unity.

\begin{proposition}
 The simple objects of $\mathcal{P}^{(\alpha)}_{tw}$ (resp. $\mathcal{P}^{(\alpha)}_{tw,f}$) are the intermediate extensions $\ICC(\mathscr{L}_{\bm{z},\lambda})$ of the local systems described above for $\bm{z}\in((\C^*)^2)^{\delta}$ (resp. $\bm{z}\in(\bm{\mu}^2)^{\delta}$). Moreover, the local systems $\mathscr{L}_{\bm{z},\lambda}$ and $\mathscr{L}_{\bm{z}',\lambda'}$ on $\mathfrak{Coh}_{\alpha}(X)$, for some $\bm{z},\bm{z}'\in((\C^*)^2))^{\delta}$ are not isomorphic if $\bm{z}$ and $\bm{z}'$ cannot be deduced from each other by a permutation.
 
 Furthermore, for each $\alpha\in\Z^+$, there is a canonical bijection
 \[
  \theta_{\alpha} : \mathcal{P}^{\alpha}_{\sharp}\rightarrow \bigsqcup_{\bm{\alpha}=(\alpha_1,\hdots,\alpha_s)\in HN(\alpha)}\prod_{i=1}^s\mathcal{P}_{\sharp}^{(\alpha_i)}
 \]
($\sharp=tw$ or $\sharp=tw,f$) such that if $\theta(\mathscr{F})=(\mathscr{F}_1,\hdots,\mathscr{F}_s)$, then $\Ind_{\bm{\alpha}}(\mathscr{F}_s\boxtimes\hdots\boxtimes\mathscr{F}_1)=\mathscr{F}\oplus\mathscr{G}$ where $\supp\mathscr{G}\subsetneq\supp\mathscr{F}$. Last, $\D(\ICC(\mathscr{L}_{\bm{z},\lambda}))=\ICC(\mathscr{L}_{\bm{z}^{-1},\lambda})$ where $\D$ denotes the Verdier duality.
\end{proposition}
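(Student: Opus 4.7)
The plan is to combine Theorem \ref{miclocchar} with Proposition \ref{miclocss}, the iterated induction diagram \eqref{iteratedind}, and standard properties of Verdier duality.

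First, I would identify the simple objects of $\mathcal{P}^{(\alpha)}_{tw}$. By Theorem \ref{miclocchar}, the simple objects of $\mathcal{P}^{\alpha}_{tw}$ are exactly the simple perverse sheaves on $\mathfrak{Coh}_{\alpha}(X)$ with nilpotent singular support; those contributing to $\mathcal{P}^{(\alpha)}_{tw}$ are the restrictions to the open semistable substack of such sheaves whose supporting stratum is $\mathfrak{Coh}_{(\alpha)}(X)$. These restrictions are simple perverse sheaves on $\mathfrak{Coh}_{(\alpha)}(X)$ with nilpotent singular support and are therefore identified by Proposition \ref{miclocss} with the $\ICC(\mathscr{L}_{\bm{z},\lambda})$ for $\bm{z} \in ((\C^*)^2)^{\delta}$ and $\lambda \in \mathscr{P}_\delta$. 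Conversely, the intermediate extension of each $\ICC(\mathscr{L}_{\bm{z},\lambda})$ from $\mathfrak{Coh}_{(\alpha)}(X)$ to $\mathfrak{Coh}_\alpha(X)$ is a simple perverse sheaf with nilpotent singular support, and hence lies in $\mathcal{P}^{\alpha}_{tw}$ by Theorem \ref{miclocchar}. For the finite-monodromy variant, characters of $\pi_1^{\acute{e}t}(X) \simeq \widehat{\Z}^2$ take values in the roots of unity, so the constraint becomes $\bm{z} \in (\bm{\mu}^2)^\delta$. The non-isomorphism claim follows from the analysis after Proposition \ref{canoque}: pulling any simple summand of $(p_\delta)_*(\mathscr{L}_{z_1} \boxtimes \cdots \boxtimes \mathscr{L}_{z_\delta})$ back along $p_\delta$ extends to $X^\delta$ as a direct sum of permutations of $\mathscr{L}_{z_1} \boxtimes \cdots \boxtimes \mathscr{L}_{z_\delta}$, so two collections differing by more than a permutation yield pairwise non-isomorphic simple summands.

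For the canonical bijection $\theta_\alpha$ and the induction identity, I would restrict the iterated induction diagram \eqref{iteratedind} to the supporting Harder-Narasimhan stratum $\mathfrak{Coh}_{\bm{\alpha}}(X)$ of a given simple $\mathscr{F} \in \mathcal{P}^{\alpha}_{\sharp}$. On that stratum $p$ is an isomorphism and $q$ is a smooth vector bundle stack with connected fibers, so $(p^*\mathscr{F})|_{p^{-1}(\mathfrak{Coh}_{\bm{\alpha}}(X))}$ descends along $q$ to a uniquely determined external product $\mathscr{F}_s \boxtimes \cdots \boxtimes \mathscr{F}_1$ of simple perverse sheaves $\mathscr{F}_i$ on $\mathfrak{Coh}_{(\alpha_i)}(X)$. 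By the smoothness of $q$ and Theorem \ref{miclocchar}, each $\mathscr{F}_i$ has nilpotent singular support and hence lies in $\mathcal{P}^{(\alpha_i)}_\sharp$ by the first part. This defines $\theta_\alpha$. To see that $\Ind_{\bm{\alpha}}(\mathscr{F}_s \boxtimes \cdots \boxtimes \mathscr{F}_1) = \mathscr{F} \oplus \mathscr{G}$ with $\supp \mathscr{G} \subsetneq \supp \mathscr{F}$, I would apply the decomposition theorem to the proper map $p$ of the induction diagram \eqref{inddiagramit}; since over $\mathfrak{Coh}_{\bm{\alpha}}(X)$ this map is an isomorphism, only one simple direct summand can have full support $\overline{\mathfrak{Coh}_{\bm{\alpha}}(X)}$, and that summand is $\mathscr{F}$. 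The main obstacle, which I expect to be the principal technical point, is the bijectivity of $\theta_\alpha$: injectivity follows from the uniqueness of the descent together with the supporting stratum being well-defined, while surjectivity requires using the intermediate extension of the descended local system back to $\mathfrak{Coh}_\alpha(X)$ via Proposition \ref{extensionlocsys} to produce a preimage $\mathscr{F}$ in $\mathcal{P}^\alpha_\sharp$.

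Finally, since Verdier duality commutes with intermediate extensions, to prove $\D(\ICC(\mathscr{L}_{\bm{z},\lambda})) = \ICC(\mathscr{L}_{\bm{z}^{-1},\lambda})$ it suffices to identify $\D(\mathscr{L}_{\bm{z},\lambda})$ on the smooth \emph{rss} locus. Writing $\mathscr{L}_{\bm{z},\lambda}$ as the $V_\lambda$-isotypical component of $(p_\delta)_*(\mathscr{L}_{z_1} \boxtimes \cdots \boxtimes \mathscr{L}_{z_\delta})$ and using that $p_\delta$ is finite \'etale (so Verdier duality commutes with $(p_\delta)_*$), that Verdier duality sends each $\mathscr{L}_{z_i}$ to $\mathscr{L}_{z_i^{-1}}$ up to shift, and that every complex irreducible representation of $\mathfrak{S}_\delta$ is self-dual so that the $V_\lambda$-isotypical decomposition is preserved, we obtain the claimed identity.
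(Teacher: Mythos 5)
Your overall strategy runs in parallel with the paper's, and the forward direction, the construction of $\theta_\alpha$, and the Verdier duality computation are all sound and essentially match what the paper does (the paper is terser but invokes the same ingredients: Theorem \ref{miclocchar}, the isomorphism $p$ over an HN-stratum in \eqref{iteratedind}, and duality of rank-one local systems on $X$).

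There is, however, a genuine gap in your converse direction. You write that the intermediate extension of $\ICC(\mathscr{L}_{\bm{z},\lambda})$ from $\mathfrak{Coh}_{(\alpha)}(X)$ to $\mathfrak{Coh}_{\alpha}(X)$ "is a simple perverse sheaf with nilpotent singular support, and hence lies in $\mathcal{P}^{\alpha}_{tw}$ by Theorem \ref{miclocchar}." The nilpotency of the singular support of that intermediate extension is precisely what needs to be established, and it does not follow from the fact that the restriction to the open semistable locus has nilpotent singular support: for an open embedding $j$, $SS(j_{!*}\mathscr{F})$ can acquire new Lagrangian components supported over the boundary, and nothing in Proposition \ref{miclocss} controls these. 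Invoking Theorem \ref{miclocchar} at this point is circular, since Theorem \ref{miclocchar} only tells you that membership in $\mathcal{P}^{\alpha}_{tw}$ and nilpotency of singular support are equivalent; you need an independent proof of one of the two. The paper closes this gap constructively: writing $\alpha=\delta\alpha'$ with $\alpha'$ coprime, one checks that $\ICC(\mathscr{L}_{\bm{z},\lambda})$ on $\mathfrak{Coh}_{\alpha}(X)$ occurs as a simple constituent of the induction $\Ind_{\alpha',\hdots,\alpha'}(\ICC(\mathscr{L}_{z_1})\boxtimes\hdots\boxtimes\ICC(\mathscr{L}_{z_\delta}))$ of (shifted) local systems on coprime semistable components, which is membership in $\mathcal{P}^{\alpha}_{tw}$ directly from the definition; nilpotency of the singular support then follows, rather than being assumed. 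Your argument would be complete if you replaced the unjustified assertion with this explicit realization as a summand of an induction, using the decomposition theorem for the proper map $p$ together with the identification of the restriction of the induction to the regular semisimple locus of the semistable stratum as $(p_\delta)_*(\mathscr{L}_{z_1}\boxtimes\hdots\boxtimes\mathscr{L}_{z_\delta})$.

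As a minor remark, your appeal to Proposition \ref{extensionlocsys} for the surjectivity of $\theta_\alpha$ is slightly off target: that proposition concerns local systems on a single semistable stratum $\mathfrak{Coh}_{(\alpha)}(X)$, whereas the descended object in the $\theta_\alpha$ construction lives on a product $\prod_i\mathfrak{Coh}_{(\alpha_i)}(X)$. The surjectivity is better obtained directly from the first part of the proposition applied to each factor together with the iterated induction, as in the corresponding argument of \cite[Proposition 3.4]{MR2942792}.
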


\begin{proof}
 The statement involving the Verdier duality follows from the fact that the dual of a local system on an elliptic curve is the local system with the inverse monodromy (equivalently, the dual of a representation $\rho : \Z^2\rightarrow \C^*$ is the representation $\rho^{-1} : \Z^2\rightarrow \C^*$, $z\mapsto \rho(z)^{-1}$). The existence of the bijection $\theta_{\alpha}$ is analogous to that of the similar map of \cite[Proposition 3.4]{MR2942792} and follows from the fact that the map $p$ in the iterated induction diagram restricted to sheaves of HN-type $\bm{\alpha}$ \eqref{iteratedind} is an isomorphism.
 
 For the first statement, the intermediate extensions $\ICC(\mathscr{L}_{\bm{z},\bm{\lambda}})$ for $\bm{z},\bm{\lambda}$ as in the theorem are simple objects of $\mathcal{P}_{tw}^{\geq(\alpha)}$ or $\mathcal{P}_{tw,f}^{\geq(\alpha)}$. Indeed, if $\alpha=\delta\alpha'$ with $\alpha'$ coprime and $\bm{z}=(z_1,\hdots,z_{\delta})$, this intersection cohomology sheaf appears as a simple constituent of the induction $\Ind_{\alpha',\hdots,\alpha'}(\ICC(\mathscr{L}_{z_1}\boxtimes\hdots\boxtimes\mathscr{L}_{z_{\delta}}))$. To conclude, we need to show that when performing inductions, no more simple perverse sheaves on $\mathfrak{Coh}_{\alpha}(X)$ whose support intersects the semistable stratum appear. This is a consequence of Theorem \ref{miclocchar} above and of its proof since all sheaves appearing in the inductions have nilpotent singular support, which is also proved in the proof of Theorem \ref{miclocchar}.
\end{proof}

\section*{Acknowledgements}
I warmly thank Olivier Schiffmann for asking me the question leading to this paper, for useful discussions concerning his work on the elliptic Hall algebra and for careful reading of a previous version of this article.

\bibliographystyle{alpha}
\newcommand{\etalchar}[1]{$^{#1}$}

\end{document}